\newtheorem{theorem}{Theorem}[section]
\newtheorem{definition}{Definition}
\newtheorem{lemma}[theorem]{Lemma}
\newtheorem{proposition}[theorem]{Proposition}
\theoremstyle{remark}
\newtheorem{remark}{Remark}
\newcommand\abs[1]{\left|#1\right|}
\newcommand{\norm}[1]{ \left\lVert {#1} \right\rVert}
\newcommand{\cadlag}{c\`adl\`ag }
\newcommand{\cP}{\mathcal{P}}
\newcommand{\cR}{\mathcal{R}}
\newcommand{\cC}{\mathcal{C}}
\newcommand{\R}{\mathbb{R}} 
\newcommand{\E}{\mathbb{E}} 
\newcommand{\V}{\mathcal{V}}
\newcommand{\F}{\mathbb{F}}
\newcommand{\al}{\alpha} 
\newcommand{\corr}{\twoheadrightarrow}
\newcommand{\cE}{\mathcal{E}} 
\newcommand{\cQ}{\mathcal{Q}} 
\newcommand{\cD}{\mathcal{D}} 
\newcommand{\cM}{\mathcal{M}}
\newcommand{\cA}{\mathcal{A}}
\newcommand{\tr}{\text{Tr}}
\theoremstyle{definition}
\newtheorem{assump}{Assumption}
\newenvironment{myassump}[2][]
  {\begin{assump}[#1]}
  {\end{assump}}
 \newtheorem{prop}{Property}
\newenvironment{myprop}[2][]
  {\begin{prop}[#1]}
 {\end{prop}}
\newcommand{\stkout}[1]{\ifmmode\text{\sout{\ensuremath{#1}}}\else\sout{#1}\fi}
\begin{document}

\title[Mean field games with controlled jump-diffusion dynamics]{Mean field games with controlled jump-diffusion dynamics: Existence results and an illiquid interbank market model} 

\author{Chiara Benazzoli}
\address{Department of Mathematics, University of Trento, Italy}
\email{c.benazzoli@unitn.it}

\author{Luciano Campi}
\address{Department of Statistics, London School of Economics, UK}
\email{l.campi@lse.ac.uk}

\author{Luca Di Persio}
\address{Department of Computer Science, University of Verona, Italy}
\email{luca.dipersio@univr.it}

\date{\today}

\maketitle

\begin{abstract}
We study a family of mean field games with a state variable evolving as a multivariate jump diffusion process. The jump component is driven by a Poisson process with a time-dependent intensity function. All coefficients, i.e. drift, volatility and jump size, are controlled. Under fairly general conditions, we establish existence of a solution in a relaxed version of the mean field game and give conditions under which the optimal strategies are in fact Markovian, hence extending to a jump-diffusion setting previous results established in \cite{La}. The proofs rely upon the notions of relaxed controls and martingale problems. Finally, to complement the abstract existence results, we study a simple illiquid inter-bank market model, where the banks can change their reserves only at the jump times of some exogenous Poisson processes with a common constant intensity, and provide some numerical results. \medskip\\
\emph{Keywords:} mean field games, jump measures, controlled martingale problem, relaxed controls, martingale measure, illiquid interbank market model.
\end{abstract}

\section{Introduction}
Mean field games (MFGs, henceforth) were introduced by Lasry and Lions in \cite{lasry2006jeux1,lasry2006jeux2,lasry2007mean} and, independently, by Huang and co-authors in \cite{huang2006large}, as optimization problems approximating large population symmetric stochastic differential games, where the interaction between the players is of mean field type. A solution of the limit MFG allows to construct approximate Nash equilibria for the corresponding
$n$-player games if $n$ is large enough; see, e.g., \cite{carmona2013mean}, \cite{carmona2013probabilistic}, \cite{carmona2015probabilistic}, \cite{huang2006large}, \cite{kolokoltsov2011mean} as well as the recent book \cite{carmona2016lectures}. This approximation result is also practically relevant since a direct computation of Nash equilibria in the $n$-player game with $n$ very large is usually not feasible even numerically, due to the curse of dimensionality. Moreover, MFGs represent a very flexible framework for applications in various areas including but not limited to finance, economics and crowd dynamics (see \cite{gueant2011mean,carmona2016lectures} for a good sample of applications), which partly explain the increasing literature on the subject.\smallskip

In this paper, we study a family of MFGs with controlled jumps, that can be shortly described as follows.
Let $T>0$ be a finite time horizon. We consider the controlled state variable $X=X^\gamma$ taking values in $\mathbb R^d$ and following the dynamics
\begin{equation}\label{Xrelax} dX_t =  b(t,X_t,\mu_t, \gamma_t ) dt + \sigma(t,X_t, \mu_t ,\gamma_t ) dW_t + \beta(t,X_{t-},\mu_{t-},\gamma_{t}) d\widetilde N_t,\quad t\in[0,T],\end{equation}\noindent
where $\mu$ is a probability measure on the Skorokhod space $\cD([0,T];\mathbb R^d)$ of right-continuous with left limit functions, $\gamma_t$ represents a control process with values in a fixed action space $A$, $W$ is a standard multivariate Brownian motion and $\widetilde N$ is a compensated Poisson process with some time-dependent intensity $\lambda(t)$. Moreover, we assume that $W$ and $\widetilde N$ are independent. The precise meaning of $\mu_t$ and $\mu_{t-}$ appearing in the dynamics above will be given in the next section. Intuitively, $(\mu_t)_{t\in [0,T]}$ represents a flow of probability measures on the state space $\R^d$, while $\mu_{t-}$ the left limit at time $t$ with respect to the weak convergence.  

Given some running cost $f$ and some final cost $g$, the aim is to find a control $\hat \gamma$ solving the following minimization problem
\begin{equation}
\inf_{\gamma} \E\left[ \int_0 ^T f(t,X_t,\mu_t ,\gamma_t) dN_t+g(X_T,\mu_T)\right]
\end{equation}\noindent
over all control processes $\gamma$ as above and such that the so-called mean field condition is fulfilled: the measure $\mu$ has to be equal to the law of the optimally controlled state variable.

This model is the natural limit of a symmetric nonzero-sum $n$-player game for $n \ge 1$ large enough, where each player controls the drift, the volatility as well as the jump sizes of her/his own private state and the states are coupled through their empirical distribution. In more detail, consider the following dynamics for the private state, $X^i$, of player $i=1,\ldots,n$:
\begin{equation}\label{X^i} dX^{i,n} _t =     b(t,X^{i,n} _t, \bar \mu_t ^n , \gamma_t ^{i,n} ) dt + \sigma(t,X^{i,n} _t, \bar \mu_t ^n , \gamma_t ^{i,n} ) dW^i _t + \beta(t,X_t ^{i,n} , \bar \mu^n _{t} ,\gamma^{i,n} _t) d\widetilde N^i _t,\quad t\in[0,T],\end{equation}
where $\gamma^i_t $ is the control of player $i$ at time $t$, $\bar \mu^n _t = (1/n)\sum_{j=1}^n \delta_{X_t ^{j,n}}$ is the empirical distribution of the vector $(X_t ^{1,n},\ldots, X_t ^{n,n})$ of the private states of all players, $(W^i)_{i \ge 1}$ is a sequence of independent multivariate Brownian motions and $(\widetilde N^i)_{i \ge 1}$ is a sequence of compensated Poisson processes with the same intensity $\lambda(t)$. The goal of each player $i$ is minimizing some objective function, given by
\begin{equation}
\E\left[ \int_0 ^T f(t,X^{i,n} _t, \bar \mu^n _t ,\gamma^{i,n} _t) dt +g(X^{i,n} _T, \bar \mu^n _T)\right]
\end{equation} \noindent
over her/his controls. This is a symmetric stochastic differential game, where the agents interact through the empirical mean of their private states, entering in both the drift and the jump component. According to mean field game theory, we expect that as the number of player gets larger and larger, the $n$-player game just described tends in some sense to the minimization problem in (\ref{Xrelax}), that is the solution to the latter would provide a good approximation of some Nash equilibrium in the $n$-player game. In the present paper, we focus on the existence of solutions for the limit MFG and we postpone to future research the other equally important issues of uniqueness of the MFG solution and approximation of Nash equilibria of the $n$-player game when $n$ is large. 

While the uncontrolled counter-part of MFG, that is particle systems and propagation of chaos for jump processes, has been thoroughly studied in the probabilistic literature (see, e.g., \cite{Gra,jourdain2007nonlinear} and the very recent preprint \cite{andreis2017mckean}), MFGs with jumps have not attracted much attention so far. In particular, MFGs for jump-diffusions have not been considered so far in the literature. Indeed, most of the existing articles focus on non-linear dynamics with continuous paths, with the exception of some papers such as \cite{gomes2013continuous}, \cite{hafayed2014mean}, \cite{kolokoltsov2011mean}, and the pre-print \cite{cecchin2017probabilistic}. The paper \cite{hafayed2014mean} deals with stochastic control of McKean-Vlasov type (see \cite{carmona2013control} for a comparison between MFG and McKean-Vlasov control), whereas \cite{kolokoltsov2011mean} uses methods based on potential theory and nonlinear Markov processes. The article \cite{gomes2013continuous} performs a detailed analysis of MFGs for continuous-time Markov chains, and the very recent \cite{cecchin2017probabilistic} studies MFGs with finitely many states via a probabilistic approach based on a Poisson-type approximation. Finally, we cite also the paper \cite{fu2017mean}, where MFGs with singular controls are considered for the first time, hence introducing the possibility of jumps in the state variables. 

The approach we use in this paper is based on weak formulation of stochastic controls, relaxed controls and martingale problems, which is very much inspired by Lacker \cite{La}. We extend to MFGs with jump-diffusive state variables, where the jump size as well as drift and volatility coefficients are controlled, the main results established by D. Lacker in \cite{La}. More in detail, our main contributions can be summarized as follows:

\begin{itemize}
\item Using the very powerful relaxed approach to MFGs introduced by Lacker \cite{La}, we establish abstract existence results for a class of MFGs with a multivariate state variable of jump-diffusion type, under suitable growth conditions on its coefficients and on the cost functional, hence including the case of unbounded coefficients and controls. The existence of Markovian solutions is also considered and sufficient conditions granting it are given. The issue of approximation of the $n$-player games by means of the MFG solution is considered in a companion paper \cite{benazzoli2017varepsilon}, while the one of uniqueness is postponed to future research.\smallskip

\item We complement the abstract existence results with an illiquid interbank market toy model, inspired by the systemic risk model proposed by Carmona and co-authors in \cite{CFS}. Within this model, using techniques based on forward/backward stochastic differential equations, we can compute explicitly the Nash equilibrium and see the convergence to the solution of the limit MFG. Moreover, we perform some numerical experiments showing the role of illiquidity in driving the evolution over time of the controls and the state variables.
\end{itemize}

The paper is structured as follows. Section \ref{sec:relax} provides the relaxed version of the MFG with controlled jumps together with all the assumptions on the coefficients, while in Section \ref{sec:existence} we state and prove the main existence results in the bounded case. In Section \ref{sec:existence-unbbd}, we extend the existence result to the unbounded case, i.e. coefficients and controls can be unbounded. Section \ref{sec:markov} identifies sufficient conditions guaranteeing the existence of a Markovian MFG solution.
In Section \ref{sec:ToyModel} we study Nash equilibria and their convergence towards the MFG solution in the illiquid interbank model; some numerical experiments are also performed.
Finally, the paper ends with an appendix, collecting most of the technical results used in the proofs of the main theorems.

\section{The relaxed MFG problem with controlled jumps}\label{sec:relax}


Following the approach in \cite{La}, we are going to study a relaxed version of the MFG briefly described in the introduction. Slightly more precisely, we will re-define the state variable and the controls on a suitable canonical space supporting all the randomness sources involved in the SDE with jumps above, so that the solution to the MFG will be identified with a probability measure $P$ on that space, that can be seen as the joint law of the pair state/control as in \cite{La}. Therefore, finding a relaxed solution to the MFG above will boil down to finding a fixed point for a suitably defined set-valued map. The rest of this section sets up the main assumptions on the state variable and the cost functions as well as the precise definition of relaxed mean field game with controlled jumps, while the issue of existence of solutions will be addressed in the next section.


\subsection{Notation}
We start with some notation that will be used throughout the whole paper. Let $\cD=\cD([0,T];\R^d )$ denote the set of all \cadlag functions with values in $\mathbb R^d$, i.e. the set of all right continuous with left limit functions $x\colon[0,T]\to \R^d$, endowed with the Skorokhod topology $J_1$. We will refer to \cite{Bill} for all the definitions and results on this topology when needed. Given some metric space $(S,d)$, $\mathcal P(S)$ denotes the set of all probability measures defined on the measurable space $(S,\mathcal B(S))$, where $\mathcal B(S)$ is the Borel $\sigma$-field of $S$. $\mathcal P(S)$ will be equipped with the topology induced by the weak convergence of measures. Moreover, $\mathcal P^p(S)$, for some $p \ge 1$, denotes the set all $P \in \mathcal P(S)$ such that $\int_S d(x,x_0)^p P(dx) < \infty$ for some (hence for all) $x_0 \in S$. In this paper, the set $\mathcal P^p (S)$ will always be equipped with the Wasserstein metric
\[
d_{W,p}(\mu,\nu) = \inf_{\pi\in\Pi(\mu,\nu)}\left(\int_{S\times S}d(x,y)^p \pi(dx,dy)\right)^\frac{1}{p}, \quad \mu, \nu\in\cP^p(S),
\]
where
\[
\Pi(\mu,\nu) = \left\{\pi\in\cP(S\times S):\pi\text{ has marginals } \mu,\nu\right\}.
\]
Finally for any measure $\mu \in \mathcal P^p (S)$ for $S$ being either $\R^d$ or $\cD$ we will use the notation
\begin{eqnarray*} | \mu |^p &=& \int_{\R^d} |x|^p \mu(dx),\\
\| \mu\|_t ^p &=& \int_{\cD} (|x|_t ^*)^p \mu(dx) , \quad |x|_t ^* :=\sup_{s\in [0,t]} |x(s)| \, . 
\end{eqnarray*}

Product spaces will always be endowed with the product $\sigma$-fields. Given any smooth enough scalar function $\psi\colon\R^n \to \R$, $D\psi$ will denote its gradient, while $D^2_x\psi$ its Hessian. Finally, for any matrix $M$ we denote $\tr[M]$ its trace.

\subsection{Assumptions}
In what follows we will make use of the following standing assumptions on the coefficients $b,\sigma, \beta$, the costs $f, g$ and the initial probability distribution $\chi$ of the state process $X$.

\begin{myassump}{A} Let $p' > p \ge 1$ be given real numbers.
\label{ass:A}
\begin{enumerate}[(\ref{ass:A}.1)]
\item \label{ass:A:chi} $\chi$ is a distribution in $\cP^{p'}(\R^d )$.\medskip
\item \label{ass:A:lambda} The intensity function $\lambda : [0,T] \to (0,\infty)$ is measurable and bounded by some constant $c_\lambda$.
\item \label{ass:A:cont} The coefficients
\[ (b,\sigma,\beta) : [0,T] \times \mathbb R^d \times \cP^p (\R) \times A \to \mathbb R^d \times \R^{d\times m} \times \mathbb R^{d}, \]
as well as the costs $f \colon[0,T]\times\R^d \times\cP^p (\R^d )\times A \to \R$ and $g \colon\R^d \times\cP^p (\R^d )\to \R$ are continuous functions in all their variables.
\medskip
\item \label{ass:A:growth:b} There exists a constant $c_1 > 0$ such that
for all $(t,\alpha)\in[0,T] \times A$, $x, y\in\R^d$, and $\mu,\nu\in\cP^p (\R^d )$ we have
\begin{gather*}
\abs{b(t,x,\mu,\alpha)-b(t,y,\nu,\alpha)}+\abs{\sigma(t,x,\mu,\alpha)-\sigma(t,y,\nu,\alpha)} + \abs{\beta(t,x,\mu,\alpha)-\beta(t,y,\nu,\alpha)} \le c_1(\abs{x-y} + d_{W,p}(\mu,\nu))\,,\\
\abs{b(t,x,\mu,\alpha)}+\abs{\sigma \sigma^{\top}(t,x,\mu,\alpha)} + \abs{\beta(t,x,\mu,\al)} \le c_1 \left( 1+ |x| + \left(\int_{\R^d} |z|^p \mu(dz)\right)^{1/p} + |\alpha| \right)\,.
\end{gather*}
\item \label{ass:a:growth:f:g} There exists some positive constants $c_2, c_3 >0$ such that for each $(t,x,\al) \in [0,T] \times \R^d \times A$ and $\mu,\nu \in \cP^p(\R^d)$ we have
\begin{gather*}
\abs{f(t,x,\mu,\alpha)-f(t,x,\nu,\alpha)} \le c_2 d_{W,p}(\mu,\nu)\,,\\
-c_2 \left( 1+ |x|^p + |\mu|^p \right) + c_3 |\alpha|^{p'}
 \le f(t,x,\mu,\al) \le c_2 \left( 1+ |x|^p + | \mu |^p + |\alpha|^{p'} \right) \,,\\
\abs{g(x,\mu)} \le c_2 \left( 1+ |x|^p + | \mu |^p  \right)\, .
\end{gather*}
Without loss of generality we can assume $c_1=c_2=c_\lambda$.
\medskip

\item \label{ass:A:A} The control space $A$ is a closed subset of $\R^q$ for some integer $q\ge 1$.
\end{enumerate}
\end{myassump}

Few comments on the assumptions above are in order. Conditions~(\ref{ass:A}.\ref{ass:A:lambda}), (\ref{ass:A}.\ref{ass:A:cont}) and (\ref{ass:A}.\ref{ass:A:growth:b}) ensure the existence of a unique strong solution of the SDE~\eqref{eqeqX} governing the evolution of the state variable.
The growth conditions postulated in Assumptions~(\ref{ass:A}.\ref{ass:A:growth:b}) and (\ref{ass:A}.\ref{ass:a:growth:f:g}) are widely used in Lemma~\ref{Jcont} and Lemma~\ref{Rcont}, which establish crucial compactness and continuity properties needed in the fixed point argument, hence the existence of a MFG solution when the space of actions $A$ is compact. In particular, we stress that (compared to \cite{La}) we need to impose an extra Lipschitz continuity of the coefficients as well as of the running cost with respect to the measure, while in \cite{La} only continuity is needed. This is due to the fact that we have to work with the Shorokhod space, where the flows of measures are not necessarily continuous in time (see the proofs of Lemmas \ref{Jcont} and \ref{Rcont}).

Moreover, they play an important role also when extending the existence of a MFG solution to the unbounded case. This will be done by truncating the coefficients first, and then passing to the limit along the sequence of MFG solutions corresponding to the truncated data. The growth conditions on the coefficients $b,\sigma, \beta$ and the costs $f,g$ allow to safely pass to the limit while preserving the optimality. 

\begin{remark}
Observe that we have chosen a volatility coefficient $\sigma$ with linear growth, i.e. $p_\sigma =1$ in \cite{La} notation, for the sake of simplicity. The two interesting cases covered by our assumptions essentially are $p=1,p'=2$ and $p=2,p'>2$, so that our toy model of illiquid interbank market of Section \ref{sec:ToyModel} is partially covered (see our Remark \ref{rmk:model-vs-theory}). More specifically the state variable therein fits the general theory, while the costs do not as they are quadratic in the control. We stress that even in the continuous paths framework in \cite{La}, quadratic costs are not allowed here in full generality (see Lacker's example in \cite[Section 7]{La}).
\end{remark}

\subsection{Relaxed controls}
\label{ssec:relMFG}

Now we give some background on relaxed controls. The use of relaxed controls has a long tradition in optimal control theory, they have the advantage of linearizing the objective functionals while having very convenient compactness properties. We refer to \cite{borkar2005controlled} and the references therein for an overview of such an approach.

Let $\Gamma$ be a measure on the set $[0,T] \times A$, equipped with the product $\sigma$-field $\mathcal B([0,T] \times A)$, such that its first marginal equals the Lebesgue measure, i.e. $\Gamma([s,t]\times A)=t-s$, for all $0 \le s \le t \le T$, and its second marginal is a probability distribution over $A$.

The set of all measures $\Gamma$ of this type and satisfying
\[ \int_{[0,T] \times A} |\al |^p \Gamma(dt,d\al) < \infty \]
will be denoted by $\V = \V[A]$ and each element $\Gamma\in\V[A]$ is called \emph{relaxed control}.
Since $\Gamma ([0,T]\times A) = T$ for all $\Gamma \in \V$, we can renormalize all such measures and endow $\V$  with the (modified) $p$-Wasserstein metric $d_{\V}$, given by
\begin{equation}\label{2-modWass}
d_\V(\Gamma,\Gamma')=d_{W,p}\left(\frac{\Gamma}{T},\frac{\Gamma'}{T}\right)\,.
\end{equation}
Notice that such a metric turns $\V$ into a complete separable metric space. Moreover, when the action space $A$ is compact so is $\V$.

Relaxed controls can be seen as a generalization of regular controls. Indeed, every relaxed control $\Gamma\in\V$ can be related to a unique (up to  a.e. equality) measure-valued map $t\mapsto \Gamma_t\in\cP (A)$ such that $\Gamma(dt,d\al)=dt\Gamma_t(d\al)$. Moreover a control $\Gamma\in\V$ is said to be \emph{strict} if $\Gamma_t=\delta_{\gamma(t)}$ for some $A$-valued measurable function $\gamma : [0,T] \to A$, where $\delta_x$ denotes the Dirac delta function at the point $x$.

\subsection{Admissible laws} Let $\cD=\cD([0,T];\mathbb R^d)$ and let $\mathcal{F}^\cD$ be the Borel $\sigma$-algebra induced on $\cD$ by the Skorokhod norm $J_1$. Moreover, we also consider the measurable space $(\V,\mathcal{B}(\V))$, where $\mathcal B(\V)$ is the Borel $\sigma$-field associated with the (modified) $p$-Wasserstein distance defined in (\ref{2-modWass}). Let $\Omega[A]=\V \times \cD$ be endowed with the product $\sigma$-field. A generic element $\Omega[A]$ is denoted by $\omega = (\Gamma,X)$, and, with a slight abuse of notation, we denote $\Gamma$ (resp. $X$) its projection onto $\V$ (resp. $\cD$). 

The product space $\Omega[A]$ will be equipped with the canonical filtration associated to $(\Gamma,X)$, which is defined as the product filtration $\mathcal F_t = \mathcal F_t ^X \otimes \mathcal F^\Gamma _t$ for $t \in [0,T]$, where $\mathcal{F}^X_t=\sigma\left(X_s, 0\le s\le t \right)$ and  $\mathcal{F}^\Gamma_t=\sigma(\Gamma(F): F\in\mathcal{B}([0,t]\times A))$, which are the canonical filtrations generated, respectively, by $X$ and by $\Gamma$. \medskip

Let $L$ be the linear integro-differential operator defined on $ C^\infty_0(\R^d)$, i.e. the set of all infinitely differentiable functions $\phi\colon\R^d \to\R$ with compact support, by 
\begin{eqnarray}
\label{opL}
L\phi(t,x,\mu, \al)&=&b(t,x,\mu, \alpha)^\top D\phi(x)+\frac{1}{2}\tr(\sigma \sigma^\top)(t,x,\mu,\alpha)D^2 \phi(x)\\
&&+ \left[\phi(x+\beta(t,x,\mu , \alpha))-\phi(x)- \beta(t,x,\mu ,\al)^\top D\phi (x)\right] \lambda(t) \nonumber
\end{eqnarray}
for each $(t,x,\mu,\al )\in[0,T]\times \R^d \times \cP^p (\R^d ) \times A$.

For each test function $\phi\in C^\infty_0(\R^d )$ and for any measure $\mu\in \cP^p (\cD)$, $\cM_t ^{\mu,\phi} \colon \Omega[A] \to \R$ is the operator defined by
\begin{equation}
\label{opM}
\cM^{\mu,\phi}_t(\Gamma,X)=\phi(X_t)-\int_{[0,t]\times A} L\phi(s,X_{s-}, \mu_{s-},\al) \Gamma_s (d\al) ds, \quad t \in [0,T],
\end{equation}
where $\mu_{t-}=\mu\circ \pi^{-1}_{t-}$ with $\pi_{t-}: \cD \to \R^d$ defined as $\pi_{t-}(x)=x_{t-}$ for $x \in \cD$. Notice that a.e. under the Lebesgue measure we have $\mu_{t-} = \mu_t$, where $\mu_t$ is defined similarly as the image of $\mu$ via the mapping $\pi_t : \cD \to \R^d$ given by $\pi_t (x)=x_t$, $x \in \cD$.

\begin{definition}
\label{defR}
Let $\mu$ be a given probability measure in $\cP^p (\cD)$. We say that $P \in \cP^p (\Omega[A])$ is an \emph{admissible law} if it satisfies the following conditions:
\begin{enumerate}
\item \label{condin}$P \circ X^{-1} _0 = \chi\in\cP^{p'} (\R)$;
\item \label{condGamma} $\E^P[ \int_0^T \abs{\Gamma_t}^p\,dt ] < \infty$;
\item \label{martingaleproperty} $\cM^{\mu,\phi}=(\cM^{\mu,\phi}_t)_{t\in[0,T]}$ is a $P$-martingale for each $\phi\in C^\infty_0 (\R^d)$.
\end{enumerate}
We denote by $\mathcal R (\mu)$ the set of all admissible laws.
\end{definition}

\begin{remark}\label{propR}
Observe that, according to Definition~\ref{defR} above, $\cR$ represents a set-valued correspondence $\cR \colon \cP^p (\cD)\corr\cP^p (\Omega[A])$. Moreover, for each probability distribution $\mu\in \cP^p (\cD)$, $\cR(\mu)$ is nonempty if the martingale problem \eqref{opM} (with initial distribution $\chi$) admits at least one solution. The latter is guaranteed by the fact that the SDE for $X$ has one strong solution by Assumption \ref{ass:A}. Finally, $\cR(\mu)$ is a convex set for each $\mu\in \cP^p (\cD)$, i.e. any convex combination $a P_1+(1- a)P_2$ with $a\in[0,1]$ and $P_1,P_2\in\cR(\mu)$, is still an element of $\cR(\mu)$. 
\end{remark}

We conclude this part on admissible laws with the following equivalent characterization of the elements of $\cR(\mu)$ as weak solutions to a suitable stochastic differential equation with jumps. For the notion of martingale measure and related stochastic integrals, which are used in the result below, we refer to the article \cite{EKM1990}. Similar techniques as therein lead to the following result. We provide a sketch of the proof below.

\begin{lemma}
\label{defR2} Let $\mu$ be a given probability measure in $\cP^p (\cD)$. Then $\cR(\mu)$ equals the set of all probability measures $Q$ on some filtered probability space
$(\Omega',\mathcal{F'},\F' ,Q)$, with $\F'=(\mathcal{F}'_t)_{t\in[0,T]}$
satisfying the usual conditions, supporting $m$ orthogonal $\F'$-adapted (continuous) martingale measures $M=(M^1, \ldots, M^m)$ on $[0,T] \times A$ with intensity $\Gamma_t (d\al)dt$, a random Poisson measure $\mathcal N$ on $[0,T]\times A$ with compensator $ \Gamma_t (d\al)\lambda(t) dt $,
and an $\F'$-adapted process $X$ satisfying the following equation
\begin{align}
\label{eqeqX}
dX_t= \int_A b(t,X_t,\mu_t, \al) \Gamma_t (d\al) dt+\int_A \sigma(t,X_t, \mu_t , \al) M(dt,d\al)+\int_{A} \beta(t, X_{t-},\mu_{t-}, \al) \widetilde{\mathcal N}(dt,d\al)\,,
\end{align}
with initial distribution $Q\circ X^{-1}_0=\chi$
and where $\widetilde{\mathcal N}$ denotes the compensated random measure, i.e. $\widetilde{\mathcal N}(dt,d\al)=\mathcal N(dt,d\al)-\Gamma_t(d\al)\lambda(t)dt$.
\end{lemma}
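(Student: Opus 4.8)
The plan is to prove the two inclusions separately, the direction that a weak solution of~\eqref{eqeqX} yields an element of $\cR(\mu)$ being the routine one. Given $Q$ together with $(M,\mathcal N,X)$ as in the statement, I would put $P:=Q\circ(\Gamma,X)^{-1}$ on $\Omega[A]$, note that conditions~(1)--(2) of Definition~\ref{defR} are immediate (the second from the defining $p$-integrability of $\V$), and derive condition~(3) by applying the It\^o formula for \cadlag semimartingales to $\phi(X_t)$ along \eqref{eqeqX}, for $\phi\in C^\infty_0(\R^d)$. The Lebesgue--drift term, the $\tfrac12\tr(\sigma\sigma^\top D^2\phi)$ term produced by the quadratic variation of the continuous martingale--measure integral, and the compensator term $[\phi(X_{s-}+\beta)-\phi(X_{s-})-\beta^\top D\phi(X_{s-})]\lambda(s)$ produced by $\widetilde{\mathcal N}$ combine, after integration in $\al$ against $\Gamma_s$, into exactly $\int_{[0,t]\times A}L\phi(s,X_{s-},\mu_{s-},\al)\,\Gamma_s(d\al)\,ds$, so that $\cM^{\mu,\phi}_t=\phi(X_0)+(\text{local martingale})_t$. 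Since $\phi$ is bounded and, by the linear growth in Assumption~\ref{ass:A} together with $\|\mu\|_T^p<\infty$ and the $p$-th moment of $\Gamma$, the process $\int_0^\cdot\!\int_A|L\phi|\,\Gamma_s(d\al)\,ds$ is $P$-integrable, this local martingale is in fact a true $P$-martingale; hence $P\in\cR(\mu)$.

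For the converse I would work on $(\Omega[A],\mathcal F,\F,P)$ with $P\in\cR(\mu)$ and first recover the semimartingale structure of $X$. Applying condition~(3) to test functions that agree with the $i$-th coordinate map on large balls and localizing at the exit times of those balls, one obtains that
\[ M^X_t:=X_t-X_0-\int_0^t\!\!\int_A b(s,X_{s-},\mu_{s-},\al)\,\Gamma_s(d\al)\,ds \]
is a $d$-dimensional $P$-martingale (local a priori, genuine after using the growth conditions and the integrability of $\Gamma$), so that $X$ is a $P$-semimartingale with martingale part $M^X$. Testing condition~(3) next against localized products $x_ix_j$ and against general $\phi\in C^\infty_0(\R^d)$, and subtracting the identity just obtained, I would then identify the remaining characteristics of $X$ under $P$: the predictable quadratic variation of its continuous martingale part is $\langle M^{X,c,i},M^{X,c,j}\rangle_t=\int_0^t\!\int_A(\sigma\sigma^\top)_{ij}(s,X_{s-},\mu_{s-},\al)\,\Gamma_s(d\al)\,ds$, while the compensator of the jump measure $J^X$ of $X$ is the image of $\lambda(t)\,\Gamma_t(d\al)\,dt$ under $(t,\al)\mapsto(t,\beta(t,X_{t-},\mu_{t-},\al))$, restricted to nonzero jumps. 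Separating the ``second--order'' from the ``jump'' contribution in this step is the standard identification of the characteristics of a semimartingale from the associated martingale problem.

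It then remains to represent the continuous martingale part $M^{X,c}$ and the purely discontinuous part $M^{X,d}_t=\int_0^t\!\int_{\R^d}x\,\widetilde{J^X}(ds,dx)$ as stochastic integrals over $[0,T]\times A$, and this disaggregation is where I expect the real work to lie. For $M^{X,c}$ I would invoke the martingale--measure representation of El Karoui--M\'el\'eard \cite{EKM1990}: since $\langle M^{X,c}\rangle$ has the absolutely continuous form found above with integrand $\sigma\sigma^\top$, on an enlargement of the stochastic basis carrying auxiliary independent Brownian noise (to absorb possible degeneracy of $\sigma$) there exist $m$ orthogonal continuous martingale measures $M=(M^1,\dots,M^m)$ on $[0,T]\times A$ with intensity $\Gamma_t(d\al)\,dt$ such that $M^{X,c}_t=\int_0^t\!\int_A\sigma(s,X_{s-},\mu_{s-},\al)\,M(ds,d\al)$. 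For $M^{X,d}$, since the compensator of $J^X$ is the image of $\Gamma_t(d\al)\lambda(t)\,dt$ through $\beta$, a randomization argument — enlarging the space again so as to attach to each jump time $t$ of $X$ a mark in $A$ drawn, conditionally on the past and on $\Delta X_t$, from $\Gamma_t(d\al)$ restricted to $\{\al:\beta(t,X_{t-},\mu_{t-},\al)=\Delta X_t\}$ — produces a Poisson random measure $\mathcal N$ on $[0,T]\times A$ with compensator $\Gamma_t(d\al)\lambda(t)\,dt$, independent of $M$, with $\int_0^t\!\int_A\beta(s,X_{s-},\mu_{s-},\al)\,\widetilde{\mathcal N}(ds,d\al)=M^{X,d}_t$. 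Assembling the three pieces shows that $X$ solves \eqref{eqeqX} on the enlarged space while the law of $(\Gamma,X)$ is unchanged, which is the desired inclusion. The main obstacle is precisely this last step: $\sigma$ and $\beta$ are not assumed injective in the action variable, so neither the driving martingale measure nor the mark responsible for a given jump is measurable with respect to $X$ alone, and one is forced to enlarge the filtered probability space and to rely on the representation and randomization machinery of \cite{EKM1990}, whose measurability and integrability hypotheses one must verify — they hold here thanks to the continuity and linear growth in Assumption~\ref{ass:A} and the $p$-th moment of $\Gamma$.
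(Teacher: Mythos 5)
Your proposal is correct and follows essentially the same route as the paper: the easy direction by It\^o's formula, and the converse by decomposing the martingale part of $X$ into its continuous and purely discontinuous components, representing the former via the El Karoui--M\'el\'eard martingale-measure theorem and the latter via a Poisson random measure obtained by randomization on an enlarged space, exactly the combination the paper invokes through \cite{EKM1990} and the cited point-process representation result. The only difference is that you make explicit the identification of the semimartingale characteristics from the martingale problem, a step the paper delegates to \cite[Theorem IV-2]{EKM1990}.
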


\begin{proof} Since any probability measures $Q$ satisfying the properties in the statement clearly belongs to $\cR(\mu)$, we only need to show the other inclusion. The latter follows as in \cite[Theorem IV-2]{EKM1990}, which is in turn based on the representation of continuous martingales as integrals with respect to martingale measures in \cite[Theorem III-10]{EKM1990}. Such a result can be extended to any c\`adl\`ag local martingale $Z$ by using the standard decomposition $Z=Z_0 + Z^c + Z^d$, where $Z^c$ is a continuous local martingale and $Z^d$ is a purely discontinuous one. Theorem III-10 in \cite{EKM1990} can be applied to the continuous part, while Th\'eor\`eme 13 in \cite{el1977poisson} can be used to represent $Z^d$ as integral with respect to some (compensated) Poisson random measure $\widetilde{\mathcal N} (dt,d\al)$ in a possibly enlarged probability space.     
\end{proof}

\begin{remark}
Observe that in Lemma~\ref{defR2}, which provides an equivalent definition of the admissible laws $\cR(\mu)$, the measurable space $(\Omega',\mathcal{F'})$ and the filtration $\F'$ are not specified in advance. However, by definition, $\Gamma$ is an element in $\V$ and the solution process $X$ has \cadlag path. Therefore, by considering the measurable map
\[ \Omega' \ni \omega \mapsto(\Gamma(\omega), X(\omega))\in \Omega[A] = \V \times \cD
\]
we can induce a measure $P'$ on the canonical space. Furthermore, $(\Gamma,X)$ has the same law under $P'$ as it does under $P$. Therefore we will assume that $P\in\cR(\mu)$ means that $P$ is defined on the canonical space.
\end{remark}
\begin{remark} Note that under Assumption \ref{ass:A}, in particular Lipschitz continuity and growth conditions on the coefficients $b,\sigma, \beta$, the SDE with jumps \eqref{eqeqX} admits a unique strong solution.  
\end{remark}

\subsection{Relaxed mean field game} For any probability measure $\mu \in \cP^p (D)$, the objective cost functional of the  minimization problem is $\cC^\mu\colon\Omega[A]\to\R$ defined by 
\begin{equation}
\label{minCrelaxed}
\cC^\mu(\Gamma,X)=\int_{[0,T]\times A}f(t,X_t,\mu_t,\al)\Gamma(dt,d\al)+g(X_T,\mu_T)\,.
\end{equation}
Thus, solving the relaxed MFG with controlled jump component means finding a probability measure $P^* \in \cR(\mu)$ so that the expected cost under $P^*$ is minimal with respect to all admissible laws, i.e. 
\[
\int_{\Omega[A]}\cC^\mu\,dP^*=\inf_{P\in\cR(\mu)} \int_{\Omega[A]} \cC^\mu \,dP 
\]
together a suitable \emph{mean field condition}. In order to give a rigorous formulation of the latter and hence complete the description of relaxed MFG, we need to introduce the set-valued map $\cR^*\colon \mathcal P^p (\cD) \corr\cP(\Omega[A])$ defined as \medskip
\begin{equation}
\label{corrR}
\mathcal P^p (\cD) \ni \mu \mapsto \cR^*(\mu)=\arg\min_{P\in\cR(\mu)} J(\mu,P) \subset \cP(\Omega[A]) \,,
\end{equation}
where $J\colon P(\cD) \times \cP(\Omega[A]) \to \R\cup\{\infty\}$ is the expected cost given by
\begin{equation}
\label{opJ}
J(\mu,P) =\E^P\left[\cC^\mu\right]=\int_{\Omega[A]}\cC^\mu\,dP\,.
\end{equation}

\begin{remark} Observe that $\cR^* (\mu) \subset \cP^p (\Omega[A])$ whenever $\mu \in \cP^p (\cD)$. Indeed, by definition of the set $\mathcal R$, any $P\in\cR$ satisfies $\E[\int_0^T \abs{\Gamma_t}^p\,dt ] < \infty$, hence $\E^P\left[( \abs{X}^*_T)^p\right] < \infty$ in view of Lemma~\ref{stime}. Therefore $P\in\cP^p(\Omega[A])$ and being $\cR^*\subset\cR$ the conclusion holds. \end{remark}

We can at last give the rigorous definition of relaxed MFG solutions in a setting where the jump component is controlled as well.
\begin{definition}\label{def:rMFGsol} A  \emph{relaxed MFG solution} is a probability distribution $P\in\cP^p (\Omega[A])$ such that $P\in\cR^*(P\circ X^{-1})$, i.e. it provides a fixed point for the set-valued map
\[
\mathcal P^p (\cD)\ni \mu \mapsto \{P\circ X^{-1} : P\in\cR^*(\mu)\}\,.
\]
A relaxed MFG solution is said to be \emph{Markovian} (resp. \emph{strict Markovian}) if the $\V$-marginal of $P$, i.e. $\Gamma$, satisfies $P(\Gamma(dt,d\al)=dt\hat \Gamma(t,X_{t-})(d\al))=1$ for a measurable function $\hat \Gamma\colon[0,T]\times\R\to\cP^p (A)$ (resp. $P(\Gamma(dt,d\al)=dt\delta_{\hat\gamma(t,X_{t-})}(d\al))=1$ for a measurable function $\hat \gamma\colon[0,T]\times\R\to A$).
\end{definition}
The next sections address the issues of existence of relaxed MFG solutions in both cases for bounded and unbounded coefficients as well as the existence of Markovian solutions.

\section{Existence of a relaxed MFG solution in the bounded case}\label{sec:existence}

In this section we prove the existence of a relaxed solution of our MFG with controlled jump-diffusion dynamics under the additional assumptions of boundedness of the coefficients and compactness of the action space $A$. Moreover, we will also see that, under some rather standard convexity property, one can provide a (strict) Markovian MFG solution. The technical results that we use in the proofs can be found in the Appendix~\ref{sec:intermediateresults}. The following assumption will be in force throughout the whole section.

\begin{myassump}{B} \label{ass:B}
The coefficients $b,\sigma, \beta$ are bounded and the action space $A$ is compact.
\end{myassump}

\begin{theorem}
\label{thm:relaxedMFGs}
Under Assumptions \ref{ass:A} and \ref{ass:B}, there exists a relaxed MFG solution.
\end{theorem}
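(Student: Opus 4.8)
The plan is to cast the problem as a fixed point for the set-valued map $\Phi(\mu)=\{P\circ X^{-1}:P\in\cR^*(\mu)\}$ of Definition~\ref{def:rMFGsol} and to apply the Kakutani--Fan--Glicksberg fixed point theorem on a well-chosen compact convex domain $\cQ\subset\cP^p(\cD)$. Concretely, I would proceed in three steps: (i) build $\cQ$; (ii) check that $\Phi\colon\cQ\rightrightarrows\cQ$ is a well-defined self-map with nonempty convex values; (iii) check that $\Phi$ has a closed graph. A fixed point $\mu^*\in\Phi(\mu^*)$ then reads $\mu^*=P^*\circ X^{-1}$ for some $P^*\in\cR^*(\mu^*)=\cR^*(P^*\circ X^{-1})$, and $P^*\in\cP^p(\Omega[A])$ by the remark preceding Definition~\ref{def:rMFGsol}, so $P^*$ is a relaxed MFG solution.

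\textbf{Step 1: the domain.} Using the a priori moment estimate of Lemma~\ref{stime} together with the boundedness of $b,\sigma,\beta$ granted by Assumption~\ref{ass:B}, there is a constant $C$ depending only on $T$, $c_\lambda$, the sup-bounds of the coefficients and the $p'$-moment of $\chi$, such that $\E^P[(|X|^*_T)^{p'}]\le C$ for every $\mu\in\cP^p(\cD)$ and every $P\in\cR(\mu)$; note that here, unlike in the unbounded case, the bound is genuinely uniform in $\mu$. I would then take $\cQ$ to be the closed convex hull (in $\cP^p(\cD)$) of the set of all $X$-marginals $P\circ X^{-1}$ with $P\in\cR(\mu)$, $\mu$ ranging over measures with first marginal $\chi$ and $\|\mu\|_T^{p'}\le C$. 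By Lemma~\ref{Jcont} this family is tight on the Skorokhod space and uniformly $p$-integrable (so relatively compact in $\cP^p(\cD)$), whence $\cQ$ is convex and compact; moreover on $\cQ$ the $p$-Wasserstein topology coincides with the weak topology, so $\cQ$ is a compact convex subset of a locally convex Hausdorff topological vector space, as required by Kakutani--Fan--Glicksberg. All $X$-marginals arising from $P\in\cR(\mu)$ with $\mu\in\cQ$ again have first marginal $\chi$ and $p'$-moment $\le C$, hence lie in $\cQ$.

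\textbf{Step 2: $\Phi$ is a well-defined self-map with nonempty convex values.} Fix $\mu\in\cQ$. Then $\cR(\mu)\neq\emptyset$ and $\cR(\mu)$ is convex by Remark~\ref{propR} (the SDE \eqref{eqeqX} admits a strong solution under Assumption~\ref{ass:A}), and $\cR(\mu)$ is compact in $\cP^p(\Omega[A])$ by Lemma~\ref{Jcont} (compactness of $\V$ when $A$ is compact, plus tightness of the $X$-marginals and closedness of the graph of $\cR$). By Lemma~\ref{Rcont} the map $P\mapsto J(\mu,P)$ is continuous, in particular lower semicontinuous, so the infimum in \eqref{corrR} is attained: $\cR^*(\mu)\neq\emptyset$. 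Since $P\mapsto J(\mu,P)=\E^P[\cC^\mu]$ is affine and $\cR(\mu)$ is convex, $\cR^*(\mu)$ is convex; pushing forward by the linear map $P\mapsto P\circ X^{-1}$ keeps $\Phi(\mu)$ convex and, by Step~1, contained in $\cQ$.

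\textbf{Step 3: closed graph, and conclusion.} Let $\mu_n\to\mu$ in $\cQ$, $P_n\in\cR^*(\mu_n)$, $P_n\to P$ in $\cP^p(\Omega[A])$ (relative compactness of $\bigcup_n\cR(\mu_n)$ from Lemma~\ref{Jcont} justifies extracting such limits along subsequences). First, $P\in\cR(\mu)$: this is the closed-graph (upper hemicontinuity) property of $\cR$ in Lemma~\ref{Jcont}, obtained by passing to the limit in the $P_n$-martingale property of $\cM^{\mu_n,\phi}$ to get the $P$-martingale property of $\cM^{\mu,\phi}$ for every $\phi\in C^\infty_0(\R^d)$. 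Second, $P$ is optimal for $\mu$: given any $Q\in\cR(\mu)$, lower hemicontinuity of $\cR$ (also part of Lemma~\ref{Jcont}, as in \cite{La}) furnishes $Q_n\in\cR(\mu_n)$ with $Q_n\to Q$; from $J(\mu_n,P_n)\le J(\mu_n,Q_n)$ and the joint continuity of $J$ (Lemma~\ref{Rcont}) we let $n\to\infty$ to obtain $J(\mu,P)\le J(\mu,Q)$, hence $P\in\cR^*(\mu)$ and $P\circ X^{-1}\in\Phi(\mu)$. Thus $\Phi$ has a closed graph with nonempty convex values on the compact convex set $\cQ$, and Kakutani--Fan--Glicksberg yields the desired fixed point.

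\textbf{Main obstacle.} The real work — encapsulated in Lemmas~\ref{Jcont} and~\ref{Rcont} — is the stability of the martingale problem and of the cost functional under joint convergence $(\mu_n,P_n)\to(\mu,P)$. In the terms $\int_{[0,t]\times A}L\phi(s,X_{s-},\mu_{n,s-},\al)\,\Gamma_s(d\al)\,ds$ and $\int_{[0,T]\times A}f(s,X_s,\mu_{n,s},\al)\,\Gamma(ds,d\al)$ the time-marginal flow $s\mapsto\mu_{n,s-}$ is only a.e. continuous on $\cD$ and is not controlled by $d_{W,p}(\mu_n,\mu)$ uniformly in $s$ (the Skorokhod metric allows time changes), so weak convergence $\mu_n\to\mu$ is by itself not enough to pass to the limit inside these integrals; it is precisely here that one must invoke the Lipschitz dependence of $b,\sigma,\beta$ and $f$ on the measure argument postulated in Assumption~\ref{ass:A} (stronger than the mere continuity used in \cite{La}), which supplies the uniform modulus needed to conclude by dominated convergence.
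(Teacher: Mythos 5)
Your proposal is correct and follows essentially the same route as the paper: a Kakutani--Fan--Glicksberg fixed point argument for $\mu\mapsto\{P\circ X^{-1}:P\in\cR^*(\mu)\}$ on a compact convex subset of $\cP^p(\cD)$ built from the uniform moment and increment estimates (Lemma~\ref{stime}, Proposition~\ref{Qccompact}), with upper hemicontinuity of $\cR^*$ deduced from continuity of $J$ and of $\cR$ --- your Step~3 merely unfolds by hand the Berge Maximum Theorem that the paper invokes. The only corrections needed are that you have swapped the labels of Lemma~\ref{Jcont} (joint continuity of $J$) and Lemma~\ref{Rcont} (continuity and relative compactness of the image of $\cR$), and that tightness of the $X$-marginals on the Skorokhod space does not follow from the moment bound alone but from the Aldous/Whitt-type increment estimate established in Proposition~\ref{Qccompact} (equivalently, the conditional increment bound the paper builds into its set $\cQ$).
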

\begin{proof}
According to Definition~\ref{def:rMFGsol}, a probability distribution $P\in\cP^p (\Omega[A])$ is a relaxed MFG solution if it is a fixed point for the correspondence $\cE\colon \cP^p (\cD) \corr \cP^p (\cD)$ given by \begin{equation}
\label{corrE}
\cP^p (\cD) \ni \mu \mapsto \cE(\mu) :=\set{P\circ X^{-1} : P\in\cR^*(\mu)}\notag ,
\end{equation}
where $\cR^*$ denotes the correspondence defined in \eqref{corrR}.
In order to prove the existence of such a $P$, we apply the Kakutani-Fan-Glicksberg fixed point theorem (see, e.g., \cite[Theorem 17.55]{AlBo}) to a restriction of $\cE$ to a suitably chosen domain. Indeed, that theorem applies to convex-values correspondences with closed graph and nonempty, compact, convex domain. Therefore we look for a convex compact subset $\mathcal K \subset \cP^p (\cD)$, containing $\cE(\cP^p (\cD))$, and consider the restriction of $\cE$ on $\mathcal K$, which we will denote by $\cE_{\mathcal K} \colon \mathcal K \corr \mathcal K$. We split the remainder of the proof into four steps. \medskip

\emph{Step 1: the map $\cE$ is upper hemicontinuous with non-empty compact convex values (hence it has closed graph by \cite[Theorem 17.11]{AlBo}).}

Lemma~\ref{Jcont} implies the joint continuity of the function $J$ defined in \eqref{opJ}, whereas Lemma~\ref{Rcont} yields that $\cR$ is continuous and has nonempty compact values, and therefore by applying the Berge Maximum Theorem (see \cite[Theorem 17.31]{AlBo}), we have that the correspondence $\cR^*$ is indeed upper hemicontinuous with nonempty compact values. By continuity of the map $\cP^p (\Omega[A])\ni P\mapsto P\circ X^{-1} \in \cP^p (\cD)$ so is $\cE$ (cf. \cite[Theorem 17.23]{AlBo}).
Recall that, by Remark~\ref{propR}, $\cR(\mu)$ is convex for each $\mu$. Hence by linearity\footnote{In this context, by linearity of the map $P \mapsto P\circ X^{-1}$ we mean linearity with respect to convex combinations, namely:
$(a P + (1-a) Q)\circ X^{-1} (B) =( a P \circ X^{-1} + (1-a) Q\circ X^{-1}) (B)$ for any Borel set $B$ and for all $a \in [0,1]$. Also $J$, being an expected value, is linear with respect to convex combinations in the underlying probability $P$.} and continuity of $P\mapsto J(\mu,P)$ and of $P \mapsto P\circ X^{-1}$, it follows that also $\cR^*(\mu)$ and $\cE(\mu)$ are convex sets for each $\mu\in \cP^p (\cD)$.\medskip

\emph{Step 2: identification of a suitable auxiliary compact convex subset of $\cP^p (\Omega[A])$.}

Let $\chi \in \mathcal P^{p'}(\mathbb R^d )$ be a given initial law. Let $\cQ$ be a set of the probability measures $P$ in $\cP^p (\Omega[A])$, such that:
\begin{enumerate}[(i)]
\item \label{cond:cQ1} $X_0\sim\chi$;
\item \label{cond:cQ2} $\E^P \left[ (\abs{X}^*_T )^{p} \right] \le C$, where $C = C(T,c_1,c_\lambda,\chi)$ denotes the constant appearing in equation~\eqref{bound-X} of Lemma~\ref{stime}, which is independent of $P$;
\item \label{cond:cQ3} $X$ is adapted to a filtration $\F=(\mathcal F _t)_{t\in[0,T]}$ and satisfies
\begin{equation}\label{cond:cQZ}
\E^P \left[\left(X_{(t+u)\wedge T}-X_t\right)^p \vert\mathcal{F}_t \right]\le C_1 \delta
\end{equation}
for $t\in[0,T]$ and $u\in[0,\delta]$, with $C_1=\bar C(c_1,c_\lambda,\sigma)$ defined in \eqref{def:barC} independently of $P$.
\end{enumerate}
We need to prove that $\cQ$ is a nonempty, compact and convex subset of $\cP^p (\Omega[A])$. 
First, $\cQ$ is convex by construction: consider $P^\prime = a P_1 +(1-a) P_2$ with $a\in[0,1]$, $P_1,P_2\in\cQ$ and corresponding filtrations $\mathbb F^1$ and $\mathbb F^2$ as in condition (iii) above. Conditions~\eqref{cond:cQ1} and \eqref{cond:cQ2} are easily satisfied by $P^\prime$ since the initial distribution $\chi$ is the same for all the probabilities and the constant $C$ does not depend on them.
Condition~\eqref{cond:cQ3} for $P^\prime$ also holds with the same constant $C_1$ as in \eqref{cond:cQZ} and the filtration $\mathbb F^\prime = \mathbb F^1 \wedge \mathbb F^2$.

Now, we show that $\cQ$ is relatively compact in $\cP^p(\Omega[A])$. Observe that $\cQ$ is tight since it satisfies the sufficient criterion for tightness \cite[Lemma 3.11]{Whitt} \footnote{Notice that even though Whitt's result is stated only for $p=2$, a careful inspection reveals that it holds for any $p>0$ as well since it relies on his Theorem 3.3 where the exponent is any $p>0$.}. Indeed since the constant $C_1$ in~\eqref{cond:cQZ} is independent of $P$, it suffices to choose (in the notation of \cite[Lemma 3.11]{Whitt}) $Z(\delta)=C_1\delta$, and the tightness follows. Applying \cite[Prop. 7.1.5]{ambrosio} and the bound (ii) in the definition of $\cQ$ we get that $\cQ$ is bounded in $\cP^p (\Omega[A])$. Therefore $\cQ$ is relatively compact, hence its closure $\overline \cQ$ for the $p$-Wasserstein metric is compact in $\mathcal P^p (\Omega[A])$ and convex.
\medskip

\emph{Step 3: identification of a compact subset $\mathcal K$ of $\cP^p (\cD)$.}

We can now define $\mathcal K$ as follows
\[
\mathcal K=\set{\nu \in \cP^p (\cD) : \text{there exists $P\in \overline \cQ$ such that $\nu=P\circ X^{-1}$}}.
\]\noindent
Since $\overline \cQ$ is compact and convex and $P\mapsto P\circ  X^{-1}$ is a continuous and linear map, $\mathcal K$  turns out to be a convex, compact subset of $\cP^p (\cD)$ as requested in Kakutani-Fan-Glicksberg fixed point theorem. \medskip

\emph{Step 4: prove that $\cE_{\mathcal K} (\mathcal K) \subset \mathcal K$, i.e. $\cE_{\mathcal K}(\mu) \subset \mathcal K$ for all $\mu \in \mathcal K$.} 

In order to show that the range of $\cE_{\mathcal K}$ is contained in $\mathcal K$ we prove that  $\cR(\mu)\subset\cQ$ for each $\mu\in \cP^p (\cD)$, so that $\cE(\mu)\subset\mathcal K$ for all $\mu\in \cP^p (\cD)$ and therefore $\cE_{\mathcal K}(\mathcal K)\subset \mathcal K$. Let $P\in\cR(\mu)$, then it satisfies conditions~\eqref{cond:cQ1} and \eqref{cond:cQ2} by construction (see Definition~\ref{defR2} and  Lemma~\ref{stime}). The validity of condition~\eqref{cond:cQ3} can be proved arguing as in Proposition~\ref{Qccompact}. Indeed, using the same notation therein, we have that for each $u\in[0,\delta]$
\[
\E^P\left[\left(X_{(t+u)\wedge T}-X_t\right)^p \mid \mathcal F_t \right]\le \bar C(c_1,c_\lambda,\sigma) u\le \bar C(c_1,c_\lambda,\sigma)\delta \,,
\]
giving the same bound as in \eqref{cond:cQZ} with constant $\bar C(c_1,c_\lambda,\sigma)$, which does not depend on $P$. 
Note that since $\cR(\mu)$ is nonempty (see Remark~\ref{propR}) then so is $\cQ$.\medskip

Since all the hypotheses of the Kakutani-Fan-Glicksberg fixed point theorem are satisfied, we can conclude that there exists a fixed point for {the correspondence $\cE_{\mathcal K}$, that is a fixed point also for $\cE$, which is by definition a relaxed MFG solution.}
\end{proof}

\section{Existence of a relaxed MFG solution in the unbounded case}\label{sec:existence-unbbd}
By adapting to our setting the arguments used in \cite[Section 5]{La}, we extend the existence result in the previous section to the case when $b$, $\sigma$ and $\beta$ have linear growth and the action space $A$ is not necessarily compact. The main idea is to work with an approximation of these functions, namely their truncated and therefore bounded versions. Then, by a convergence argument, it must be shown that the limit of the mean field game solutions found in the truncated setting is indeed a solution for the unbounded case. The main result can be easily formulated as follows:
\begin{theorem}\label{thm-unbdd}
Under Assumption \ref{ass:A}, there exists a relaxed MFG solution.
\end{theorem}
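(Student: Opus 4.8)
\emph{Strategy: truncation.} The plan is to adapt the truncation argument of \cite[Section~5]{La}. For $n$ large enough set $A_n:=A\cap\{\,|\alpha|\le n\,\}$, a nonempty compact subset of $\R^q$ by Assumption \ref{ass:A}, and replace $(b,\sigma,\beta)$ by bounded coefficients $(b_n,\sigma_n,\beta_n)$ obtained by composing their state and measure arguments with a truncation (for instance the radial projection $\theta_n$ of $\R^d$ onto the ball of radius $n$ and the pushforward $\mu\mapsto\theta_n\#\mu$ on $\cP^p(\R^d)$). Since $\theta_n$ is $1$-Lipschitz and the pushforward is a contraction for $d_{W,p}$, the $(b_n,\sigma_n,\beta_n)$ still satisfy Assumption \ref{ass:A} with the \emph{same} constants $c_1,c_\lambda$, they satisfy Assumption \ref{ass:B}, and converge to $(b,\sigma,\beta)$ uniformly on compact sets; the costs $f,g$ and the intensity $\lambda$ are left unchanged, $f$ being simply restricted to $A_n$. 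By Theorem \ref{thm:relaxedMFGs} the $n$-th truncated MFG admits a relaxed solution $P^n$, which we view as an element of $\cP^p(\Omega[A])$ supported on $\Omega[A_n]$. Setting $\mu^n:=P^n\circ X^{-1}$ and denoting by $\cR_n,J_n$ the objects of Section \ref{sec:relax} built with the truncated data, we have $P^n\in\cR_n(\mu^n)$ and $J_n(\mu^n,P^n)=\inf_{Q\in\cR_n(\mu^n)}J_n(\mu^n,Q)$.

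\emph{Uniform estimates and extraction of a limit.} The crux of the argument is a bound on the control moments uniform in $n$. Comparing the optimal cost $J_n(\mu^n,P^n)$ with the cost of a fixed bounded strict control (admissible for all large $n$ and with uniformly bounded cost), using the coercivity lower bound $f(t,x,\mu,\alpha)\ge -c_2(1+|x|^p+|\mu|^p)+c_3|\alpha|^{p'}$ from Assumption \ref{ass:a:growth:f:g}, the moment estimate of Lemma \ref{stime} (whose constants depend only on $T,c_1,c_\lambda$, hence are common to all truncated problems, since $(b_n,\sigma_n,\beta_n)$ keep the growth constant $c_1$), and Jensen's inequality exploiting $p'>p$, one derives a self-improving inequality yielding $\sup_n\E^{P^n}\big[\int_{[0,T]\times A}|\alpha|^{p'}\,\Gamma(dt,d\alpha)\big]<\infty$, and consequently $\sup_n\E^{P^n}[(|X|_T^*)^p]<\infty$. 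The conditional-increment bound used in Step~2 of the proof of Theorem \ref{thm:relaxedMFGs} holds with the same constants, so the $\cD$-marginals of $(P^n)_n$ satisfy the tightness criterion of \cite[Lemma~3.11]{Whitt}, while the uniform $p'$-th control moment gives tightness of the $\V$-marginals; hence $(P^n)_n$ is tight in $\cP(\Omega[A])$. As the $p'$-moments are uniformly bounded and $p'>p$, every weak limit point lies in $\cP^p(\Omega[A])$ and is attained in the $p$-Wasserstein distance. Along a subsequence, $P^n\to P$ in $\cP^p(\Omega[A])$ and $\mu^n\to\mu:=P\circ X^{-1}$ in $\cP^p(\cD)$.

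\emph{The limit is admissible for $\mu$.} One checks that $P\in\cR(\mu)$ in the sense of Definition \ref{defR}: the initial-law condition passes to the limit because $X_0\sim\chi$ under each $P^n$; the integrability condition \eqref{condGamma} is the uniform moment bound; and the martingale property \eqref{martingaleproperty} is obtained by passing to the limit in $\E^{P^n}[(\cM^{\mu^n,\phi}_t-\cM^{\mu^n,\phi}_s)\,h]=0$, valid for all $0\le s\le t\le T$, all bounded continuous $\mathcal F_s$-measurable $h$, and all $\phi\in C^\infty_0(\R^d)$, using that $L_n\phi\to L\phi$ uniformly on compacts (recall $\phi$ has compact support), the Lipschitz dependence of $L\phi$ on the measure argument to replace $\mu^n$ by $\mu$, and the uniform moment bounds for the required uniform integrability. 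This is exactly where the extra Lipschitz-in-$\mu$ conditions of Assumption \ref{ass:A} come into play, as pointed out after its statement.

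\emph{Optimality — the main obstacle.} It remains to prove $P\in\cR^*(\mu)$, i.e. $J(\mu,P)\le J(\mu,Q)$ for every $Q\in\cR(\mu)$. Lower semicontinuity of the cost (Lemma \ref{Jcont}), together with $\mu^n\to\mu$ and the uniform integrability coming from the moment bounds, gives $J(\mu,P)\le\liminf_n J_n(\mu^n,P^n)$. The delicate point is the control of an arbitrary competitor: given $Q\in\cR(\mu)$ one must construct $Q^n\in\cR_n(\mu^n)$ with $\limsup_n J_n(\mu^n,Q^n)\le J(\mu,Q)$. Relying on the SDE characterization of Lemma \ref{defR2} and on strong well-posedness of \eqref{eqeqX}, one takes $Q^n$ to be the joint law of the relaxed control of $Q$ pushed forward through a measurable projection onto $A_n$ and of the corresponding solution of the truncated SDE (coefficients $(b_n,\sigma_n,\beta_n)$, measure argument $\mu^n$). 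A stability estimate for these SDEs — Lipschitz in $x$ and in $\mu$, with the uniform control-moment bound controlling the contribution of $\{|\alpha|>n\}$, whose $Q$-mass tends to $0$ — then gives $Q^n\Rightarrow Q$ and $J_n(\mu^n,Q^n)\to J(\mu,Q)$. Since $P^n$ is optimal in its own problem, $J_n(\mu^n,P^n)\le J_n(\mu^n,Q^n)$, and chaining the inequalities yields $J(\mu,P)\le\liminf_n J_n(\mu^n,P^n)\le\liminf_n J_n(\mu^n,Q^n)\le J(\mu,Q)$. Thus $P$ is a relaxed MFG solution. The hardest part is precisely this last approximation, in which an arbitrary competitor $Q$ must be adjusted to the truncated action space and coefficients while the measure argument is simultaneously moved from $\mu$ to $\mu^n$; everything else is a fairly routine adaptation of the compactness and continuity lemmas of Section \ref{sec:existence} and of \cite[Section~5]{La}.
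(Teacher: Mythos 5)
Your proposal follows essentially the same route as the paper: truncate the data, solve the truncated MFGs via Theorem \ref{thm:relaxedMFGs}, obtain uniform $p'$-moment bounds on controls and states from the coercivity of $f$, extract a limit in $\cP^p(\Omega[A])$ by the same tightness criterion, pass to the limit in the martingale problem using the Lipschitz-in-$\mu$ structure, and prove optimality by approximating an arbitrary competitor inside the truncated problems and chaining with lower semicontinuity of $J$. The only deviation is the truncation recipe (you project the state/measure arguments and cut $A$ at radius $n$, the paper projects the coefficient values into a ball of radius $n$ and cuts $A$ at $\sqrt{n/2c_1}$), a minor technical variant that does not change the argument.
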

Our proof follows closely \cite[Section 5]{La}, hence instead of giving all details, which would be rather redundant, we will sketch the main arguments and we will give more details only on those parts which are jump-specific.
  
In preparation for the proof, let us introduce some notation. For any $n \ge 1$, let $(b_n,\sigma_n,\beta_n)$ be the truncated version of the coefficients $(b,\sigma, \beta)$, i.e. $b_n$ is the pointwise projection of $b$ into the ball centered at the origin with radius $n$ in $\mathbb R^d$ and analogously for $\sigma_n$ and $\beta_n$ in their respective value spaces. Moreover, we denote $A_n$ the intersection of $A$ with the ball centered at the origin with radius $r_n = \sqrt{n/2c_1}$, where we recall that $c_1$ is the constant appearing in Assumption (A.\ref{ass:A:growth:b}) granting Lipschitz continuity as well as growth conditions on the coefficients of the state variable. Since $A$ is closed by assumption, there exists $n_0$ such that for all $n \ge n_0$ the set $A_n$ is nonempty and compact, hence the truncated data set $(b_n,\sigma_n,\beta_n, f,g, A_n)$ satisfies Assumptions \ref{ass:A} and \ref{ass:B} and, furthermore, properties (\ref{ass:A}.4) and (\ref{ass:A}.5) are fulfilled with the same constants $c_i$ ($i=1,2,3$) independent of $n$.

Due to Theorem \ref{thm:relaxedMFGs} in the previous section, for all $n \ge 1$ there exists a relaxed MFG solution corresponding to the data set $(b_n,\sigma_n,\beta_n,f,g,A_n)$, which can be viewed as a probability measure on $\Omega[A]$ since $\cP(\Omega[A_n])$ can be naturally embedded in $\cP(\Omega[A])$ due to the inclusion $A_n \subset A$. Let $L^n$ be the operator defined as $L$ in (\ref{opL}) with the truncated data $(b_n,\sigma_n,\beta_n)$ replacing $(b,\sigma,\beta)$. Now, we can define the set of admissible laws $\mathcal R_n (\mu)$ as the set of all measures $P\in \cP(\Omega[A])$ such that\begin{enumerate}
\item $P(\Gamma([0,T] \times A_n ^c)=0)=1$;
\item $P\circ X_0 ^{-1}=\chi$;
\item for all functions $\phi \in C_0 ^\infty (\R^d)$, the process 
\[ \cM_t ^{\mu,\phi ,n} := \phi(X_t)-\int_{[0,t]\times A} L^n \phi(s,X_{s-}, \mu_{s-},\al) \Gamma_s (d\al) ds, \quad t \in [0,T], \]
is a $P$-martingale. 
\end{enumerate}
We also define $\cR^* _n (\mu) := \arg \max_{P\in \cR_n (\mu)} J(\mu,P)$. Due to the embedding of $\cP(\Omega[A_n])$ in $\cP(\Omega[A])$, we can identify $\cR_n (\mu)$ (resp. $\cR_n ^* (\mu)$) with the set of admissible laws (resp. optimal laws) of the MFG with data $(b_n,\sigma_n,\beta_n, f,g,A_n)$. Finally, any relaxed MFG solution for the $n$-truncated data can be viewed as a probability $P_n \in \cR^* _n (\mu^n)$ with $\mu^n \in \cP^p (\cD)$ satisfying the mean field condition $\mu^n = P_n \circ X^{-1}$. We are now ready to give the proof of Theorem \ref{thm-unbdd}. 

\begin{proof}[Proof of Theorem \ref{thm-unbdd}] The proof is structured in three steps.\medskip 

\noindent\emph{Step 1: the sequence of relaxed MFG solutions $(P_n)_{n \ge 1}$ is relatively compact in $\cP^p(\Omega[A])$. Furthermore, we have
\begin{equation} \label{uniform-bounds} \sup_n \E^{P_n} \left[ \int_0 ^T |\Gamma_t | ^{p'} dt \right] <\infty, \quad \sup_n \E^{P_n} \left[(|X|_T ^*)^{p'} \right] = \sup_n \| \mu ^n \|_T ^{p'} < \infty.  \end{equation} }
To prove the two uniform bounds in (\ref{uniform-bounds}) one proceeds verbatim as in \cite[Lemma 5.1]{La}. Regarding the relative compactness of the sequence $(P_n)_{n \ge 1}$, it follows from Proposition \ref{Qccompact}.\medskip

\noindent\emph{Step 2: For any $P \in \cP^p(\Omega[A])$, limit in $\cP^p (\Omega[A])$ of a convergent subsequence $(P_{n_k})_{k \ge 1}$ of $(P_n)_{n\ge 1}$, we have $P \in \cR (\mu)$, where $\mu=P \circ X^{-1}$, and
\begin{equation} \label{u-bound-limit} \E^P \left[ \int_0 ^T |\Gamma_t|^{p'} dt \right] < \infty.\end{equation}} 
The proof of this step is very close to the one of \cite[Lemma 5.2]{La} except for some additional details due to the presence of jumps in the state variable. We sketch the main argument by stressing the few differences with Lacker's proof. First of all, we have $\mu=\lim_k \mu^{n_k} = \lim_k P_{n_k}\circ X^{-1}=P\circ X^{-1}$, while the bound (\ref{u-bound-limit}) is a consequence of the LHS in (\ref{uniform-bounds}) and a standard application of Fatou's lemma. 

To conclude this step it suffices to show that $P$ is an admissible law for the non-truncated MFG, i.e. $P \in \cR(\mu)$. This boils down to showing that $\cM^{\mu,\phi}$ is a $P$-martingale for all $\phi \in C^\infty _0 (\R^d)$ relying in turn on the fact that for all $n \ge 1$ the process $\cM^{\mu^n, \phi, n}$ is a $P_n$-martingale due to $P_n \in \cR_n (\mu^n)$. First, notice that for all $t \in [0,T]$
\begin{eqnarray*}
\cM^{\mu^n, \phi,n}_t (\Gamma,X) - \cM^{\mu^n ,\phi}_t (\Gamma,X) &=& \int_{[0,t] \times A} ds \Gamma_s (d\alpha)(b_n-b)^\top (s,X_s,\mu_s ^n , \alpha) D\phi(X_s) \\
&& \quad + \frac{1}{2}\tr{\left[\left(\sigma_n \sigma_n ^\top - \sigma \sigma^\top\right)(s,X_s,\mu_s ^n ,\alpha) D^2 \phi(X_s)\right]}  \\
&& \quad + \left[ \phi(X_s+\beta_n (s,X_s,\mu_s ^n ,\alpha))-\phi(X_s+\beta (s,X_s,\mu_s ^n ,\alpha)) \right .\\ 
&& \quad \left . - (\beta_n -\beta)^\top (s,X_s,\mu_s ^n ,\alpha)D\phi(X_s)\right] \lambda(s).
\end{eqnarray*} 
Proceeding exactly as in the proof of \cite[Lemma 5.2]{La}, we obtain the following bounds for the terms containing $(b_n -b)^\top$:
\begin{equation}
 \int_{[0,t] \times A} ds \Gamma_s (d\alpha) | (b_n-b)^\top (s,X_s,\mu_s ^n , \alpha) D\phi(X_s) | \le 2Cc_1 \left( tZ_1 + \int_0 ^t |\Gamma_s| ds \right) \mathbf 1_{\{2c_1 Z_1 > n\}}
\end{equation}
where $C$ is a constant such that $|\phi(x)|+|D\phi(x)| + |D^2 \phi(x)| \le C$ for all $x \in \R^d$, $n$ is large enough (more precisely, for $n \ge 2c_1$), and
\begin{equation}\label{Z1} Z_1 := 1+ |X|^* _T + \left( \sup_{n \ge 1} \int_{\cD} (|z|^* _T)^p \mu^n (dz)\right)^{1/p}.\end{equation}
The same bound holds for the term with $\sigma_n \sigma_n^\top - \sigma \sigma^\top$ as well (recall that in our case $p_\sigma =1$). Regarding the term coming from the jump part, we have the following estimates
\begin{eqnarray*}
\left | \phi(X_s+\beta_n (s,X_s,\mu_s ^n ,\alpha))-\phi(X_s+\beta (s,X_s,\mu_s ^n ,\alpha))
 - (\beta_n -\beta)^\top (s,X_s,\mu_s ^n ,\alpha)D\phi(X_s) \right |  \\ 
 \le \left | \phi(X_s+\beta_n (s,X_s,\mu_s ^n ,\alpha))-\phi(X_s+\beta (s,X_s,\mu_s ^n ,\alpha))\right|
 + \left | (\beta_n -\beta)^\top (s,X_s,\mu_s ^n ,\alpha)D\phi(X_s) \right | \\
 \le C \left(2 +  \left | (\beta_n -\beta)^\top (s,X_s,\mu_s ^n ,\alpha) \right | \right) \mathbf 1_{\{ | \beta (s,X_s,\mu_s ^n , \al)| > n\}},
\end{eqnarray*}
where we used the fact that $\beta_n - \beta \neq 0$ only if $|\beta_n | > n$. In a similar way as for $(b_n -b)^\top$ in \cite[Lemma 5.2]{La}, one can show that $|\beta_n (s,X_s,\mu_s ^n , \al)| > n$ implies $c_1 (Z_1 + |\al |) > n$ due to Assumption (A.\ref{ass:A:growth:b}) and by definition of $Z_1$ in \eqref{Z1}. Since $| (\beta_n - \beta)(s,X_s,\mu_s ^n , \al)| \le 2c_1 (Z_1 + |\al |)$, taking $n \ge 2c_1$ yields that $P_n$-a.s.
\begin{eqnarray*} \int_{[0,t] \times A} ds \Gamma_s (d\al) C \left(2 +  \left | (\beta_n -\beta)^\top (s,X_s,\mu_s ^n ,\alpha) \right | \right) \mathbf 1_{\{ | \beta (s,X_s,\mu_s ^n , \al)| > n\}} \\
\le 2 C \int_{[0,t] \times A} ds \Gamma_s (d\al) \left(1 +  c_1 ( Z_1 + |\al |) \right) \mathbf 1_{\{ c_1 (Z_1 + |\al |) > n\}}\\
\le 2  C \tilde c_1 \int_{[0,t] \times A} ds \Gamma_s (d\al) \left(1 +  Z_1 + |\al |) \right) \mathbf 1_{\{ c_1 Z_1  > n\}}\\
\le 2 C \tilde c_1 \left ( t (1+Z_1) + \int_0 ^t | \Gamma_s | ds \right) \mathbf 1_{\{ c_1 Z_1  > n\}},
 \end{eqnarray*}
 where we set $\tilde c_1 = c_1 \vee 1$. Therefore, combining the bounds above we obtain for all $t \in [0.T]$ and $P_n$-a.s.
 \[
\left | \cM^{\mu^n, \phi,n}_t (\Gamma,X) - \cM^{\mu^n ,\phi}_t (\Gamma,X) \right |  \le  6 C \tilde c_1 \left( t(1+Z_1) + \int_0 ^t |\Gamma_s| ds \right) \mathbf 1_{\{2c_1 Z_1 > n\}}.
\]
The estimate above, together with the ones in Step 1, implies the convergence
\[ \mathbb E^{P_n} \left[ \left | \cM^{\mu^n, \phi,n}_t (\Gamma,X) - \cM^{\mu^n ,\phi}_t (\Gamma,X) \right | \right] \to 0, \quad n \to \infty.\]
Finally, using the continuity of $\cM^{\nu ,\phi}_t (\Gamma,X)$ in $(\nu,\Gamma,X)$, granted by Lemmas \ref{Lemma:Cont} and \ref{Lemma:cont-mu}, we can conclude this step as in the proof of \cite[Lemma 5.2]{La}.\medskip

\noindent\emph{Step 3: The limit point $P$ is optimal, i.e. $P \in \cR^* (\mu)$.} First of all, let $P' \in \cR(\mu)$ with $J(\mu,P') < \infty$. Then, one can show that there exists a sequence of probabilities $P'_n \in \cR_n (\mu^n)$ such that
\begin{equation}\label{subseq} J_{n_k} (\mu^{n_k}, P'_{n_k}) \to J(\mu,P'), \quad k \to \infty,\end{equation}
where $J_n$ denotes the objective corresponding to the truncated data. Indeed, this can be shown as in the proof of \cite[Lemma 5.3]{La} with the difference that estimates for the jump part are needed as well. They can be obtained in a standard way using Burkholder-Davis-Gundy inequalities and the Assumption (\ref{ass:A}.\ref{ass:A:growth:b}). Therefore, since $P_n$ is optimal for each $n$ we have
\[ J_n (\mu^n , P_n ') \ge J_n (\mu^n ,P_n),\]
so that thanks to \eqref{subseq} and to the fact that $J$ is lower semicontinuous (cf. Lemma \ref{Jcont}) we obtain
\[ J(\mu,P) \le \lim \inf_{k \to \infty} J_{n_k}(\mu^{n_k}, P_{n_k}) \le \lim_{k \to \infty} J_{n_k}(\mu^{n_k}, P' _{n_k}) = J(\mu,P').\] The optimality of $P$ follows since $P'$ is arbitrary. \medskip

The proof is completed since we have exhibited an admissible law $P$, which is optimal (Step 3) and satisfies the mean field condition (Step 2). Hence $P$ is a relaxed MFG solution.
\end{proof}

\section{Existence of a Markovian MFG solution}\label{sec:markov}

The following theorem extends to our setting the results in \cite[Theorem 3.7]{La}, showing that for any element $P \in \cP^p (\mu)$, for $\mu \in \cP^p (\cD)$, there exists a Markovian control with a lower cost than $P$. We need to introduce one further assumption:
\begin{myassump}{C} \label{ass:C}
For all $(t,x,\mu)\in [0,T]\times \mathbb R^d \times \cP^p (\mathbb R^d )$, the subset
\begin{equation} \label{eq:setK}
K(t,x,\mu) := \left\{(b(t,x,\mu, \al), \beta(t,x,\mu ,\al) , \sigma \sigma^\top (t,x,\mu ,\al),z) : \al \in A, z \ge f(t,x, \mu ,\al)\right\} 
\end{equation}
of $\mathbb R^d \times \R^d \times \R^{d\times d} \times \R $ is convex. 
\end{myassump}

\begin{remark}
The additional assumption is a classical one in the relaxed approach to optimal control. We just notice that the toy model in Section \ref{sec:ToyModel} fulfils it. Slightly more generally, any model where $A$ is convex subset of $\mathbb R^d$, $b$ is affine in $(x, \al)$ and independent of $\mu$, $\beta (t,x,\mu,\al)=\beta_1\al + \beta_2$ (where $\beta_i$ have constant entries), $\sigma$ is a constant matrix and $f(t,x,\mu,\al)$ is convex in $\al$, satisfies Assumption \ref{ass:C}.  
\end{remark}

Now, we can state and prove the main result of this section.

\begin{theorem}
\label{thm:MarkovianMFGsol}
Under Assumption \ref{ass:A}, let $P$ be a relaxed MFG solution. Assume that the coefficients $(b,\sigma,\beta)$ satisfy the following property: there exist some constant $c>0$ and some continuous function $\psi : A \to (0,\infty)$ such that
\begin{equation}\label{hyp-kurtzstock} |b(t,x,\mu,\al)| + |\sigma \sigma^\top (t,x,\mu,\al)| + |\beta(t,x,\mu,\al)| \le c(1+ \psi(\al)),\end{equation}
for all $(t,x,\mu, \al) \in [0,T] \times \R^d \times \mathcal P^p (\R^d) \times A$. 
Then there exists a Markovian MFG solution.
Moreover, if also Assumption \ref{ass:C} holds, there exists a strict Markovian MFG solution.

\end{theorem}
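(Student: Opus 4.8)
The plan is to mimic the strategy of \cite[Theorem 3.7]{La}: starting from the relaxed MFG solution $P$ with fixed flow $\mu = P\circ X^{-1}$, produce a Markovian control that does at least as well against the \emph{same} $\mu$, and then argue that the associated law is itself a relaxed MFG solution because the mean field constraint is preserved. First I would freeze $\mu$ and regard $P$ as an element of $\cR(\mu)$ which is also optimal, i.e. $P \in \cR^*(\mu)$. The growth condition \eqref{hyp-kurtzstock} is exactly the hypothesis needed to invoke the ``mimicking''/Markovian projection machinery (Kurtz--Stockbridge type results, as used by Lacker): it guarantees that the relevant expectations are finite and that one can disintegrate $P$ along the state process to build a measurable kernel $\hat\Gamma : [0,T]\times\R^d \to \cP^p(A)$ with the property that, under the law $\hat P$ of the solution to \eqref{eqeqX} driven by $\Gamma(dt,d\al) = dt\,\hat\Gamma(t,X_{t-})(d\al)$, the one-dimensional marginals $\hat P\circ X_t^{-1}$ coincide with $P\circ X_t^{-1}$ for all $t$, and moreover $J(\mu,\hat P)\le J(\mu,P)$.

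Concretely the key steps, in order, are: (1) For the fixed $\mu$, apply Lemma~\ref{defR2} to represent $P$ as a weak solution of \eqref{eqeqX}; define the conditional expectations of the drift, squared-volatility and jump coefficients given $X_{t-}$ (integrated against $\Gamma_t$), and similarly the conditional expectation of the running cost, and check using \eqref{hyp-kurtzstock} and Assumption~\ref{ass:A} that these are well defined and have the right growth. (2) Invoke the Markovian mimicking theorem to obtain a measurable feedback kernel $\hat\Gamma(t,x)$ reproducing those conditional means, so that the operator $L$ applied with $\hat\Gamma$ agrees in conditional mean with $L$ applied with $\Gamma$; this yields a solution $\hat P$ of the martingale problem \eqref{opM} with $\hat P\circ X_t^{-1} = P\circ X_t^{-1}$ for every $t$, hence in particular $\hat P\circ X^{-1} = P\circ X^{-1} = \mu$ as measures on $\cD$ only after also matching the finite-dimensional distributions — here one uses that the martingale problem with Markovian coefficients determines the law, so matching marginals plus the Markov structure forces $\hat P \circ X^{-1}=\mu$; this is the point to be careful about. (3) Check $\hat P \in \cR(\mu)$: conditions \eqref{condin} and \eqref{condGamma} of Definition~\ref{defR} hold because initial law and the $p$-moment bound on $\Gamma$ are preserved by construction, and \eqref{martingaleproperty} holds by the mimicking construction. (4) Compare costs: $J(\mu,\hat P) = \E^{\hat P}[\int f\,\Gamma + g(X_T,\mu_T)]$; the terminal term is unchanged since $\hat P\circ X_T^{-1}=P\circ X_T^{-1}$, and the running term satisfies $J(\mu,\hat P)\le J(\mu,P)$ by Jensen (using that we replaced the cost by its conditional mean, and in the convex-$K$ case by exact matching) — combined with optimality of $P$ this gives $J(\mu,\hat P)=J(\mu,P)$, so $\hat P\in\cR^*(\mu)$. (5) Since $\hat P\circ X^{-1}=\mu=P\circ X^{-1}$, the fixed-point identity $\hat P\in\cR^*(\hat P\circ X^{-1})$ holds, so $\hat P$ is a relaxed MFG solution, and it is Markovian by construction.

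For the strict Markovian refinement under Assumption~\ref{ass:C}: the set $K(t,x,\mu)$ of attainable $(b,\beta,\sigma\sigma^\top,z)$-tuples being convex means that the point $(\bar b,\bar\beta,\overline{\sigma\sigma^\top},\bar z)$ obtained by averaging the coefficients (and an upper bound $\bar z$ for the averaged cost) against $\hat\Gamma(t,x)$ lies in $K(t,x,\mu)$ itself. By a measurable selection theorem (Filippov's implicit function theorem / Kuratowski--Ryll-Nardzewski, as in \cite{La}) there is a measurable map $\hat\gamma(t,x)\in A$ realizing that point, i.e. with $b(t,x,\mu,\hat\gamma)=\bar b$, $\beta(t,x,\mu,\hat\gamma)=\bar\beta$, $\sigma\sigma^\top(t,x,\mu,\hat\gamma)=\overline{\sigma\sigma^\top}$ and $f(t,x,\mu,\hat\gamma)\le\bar z \le$ the averaged cost. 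Replacing $\hat\Gamma(t,x)(d\al)$ by $\delta_{\hat\gamma(t,x)}(d\al)$ then gives a strict feedback control whose associated martingale-problem law $\hat P'$ has the same $L$-operator (hence the same marginals, hence $\hat P'\circ X^{-1}=\mu$) and a running cost no larger than that of $\hat P$, so again $\hat P'\in\cR^*(\mu)$ and the fixed-point property is retained; thus $\hat P'$ is a strict Markovian MFG solution.

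The main obstacle I expect is Step (2): one must justify that matching all one-dimensional marginals of the solution to \eqref{eqeqX} with Markovian (feedback) coefficients — a jump-diffusion now, not a pure diffusion — actually produces the \emph{same law on $\cD$} as $\mu$, not merely the same marginals. The resolution is that well-posedness of the controlled martingale problem \eqref{opM} under the Markovian coefficients (which follows from Assumption~\ref{ass:A}, cf. the strong-existence/uniqueness remarks after Lemma~\ref{defR2}) pins down the path law from the generator, and the mimicking construction is precisely designed so that this generator, applied to test functions, matches the conditional dynamics of $P$; one then has to check carefully — exactly as Lacker does, but now with the extra nonlocal term $[\phi(x+\beta)-\phi(x)-\beta^\top D\phi(x)]\lambda(t)$ in $L$ — that the conditional-expectation/mimicking step goes through for the jump coefficient $\beta$ as well, which is where hypothesis \eqref{hyp-kurtzstock} (uniform in $x$ and $\mu$) does the work of controlling $\phi(x+\beta)$.
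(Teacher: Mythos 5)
Your overall construction is the same as the paper's (invoke Kurtz--Stockbridge mimicking for the controlled generator under \eqref{hyp-kurtzstock}, compare costs via the tower property, and use measurable selection under Assumption~\ref{ass:C} for the strict statement), but there is a genuine gap exactly at the point you flag in Steps (2) and (5): the claim that $\widehat P\circ X^{-1}=\mu$ as laws on $\cD$. Your proposed resolution --- well-posedness of the Markovian martingale problem ``pins down the path law'' --- does not work, because the original $P$ is \emph{not} a solution of that Markovian martingale problem. The kernel $\widehat\Gamma(t,x)$ only reproduces conditional expectations given the current state $X_t$, not given the whole past $\mathcal F_t$; under $P$ the process $\phi(X_t)-\int_{[0,t]\times A} L\phi(s,X_{s-},\mu_{s-},\al)\,\widehat\Gamma(s,X_{s-})(d\al)\,ds$ is in general not a martingale, since $P$ need not be Markovian. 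Uniqueness for the martingale problem with feedback kernel $\widehat\Gamma$ therefore identifies the law of the mimicking process $Y$, but says nothing about the law of $X$ under $P$: only the one-dimensional marginals agree, and the finite-dimensional distributions (hence the path laws on $\cD$) differ in general. So the fixed-point property of Definition~\ref{def:rMFGsol} cannot be obtained by asserting $\widehat P\circ X^{-1}=\mu$.

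The paper closes the argument differently, and this is the missing idea: it only establishes Properties (\ref{prop:M}.1)--(\ref{prop:M}.3), namely $J(\mu,P^*)\le J(\mu,P)$ (in fact equality, by the tower property --- no Jensen step is needed at the relaxed stage, since the cost is linear in the control measure), the matching of the time-marginals $P^*\circ X_t^{-1}=P\circ X_t^{-1}$ for all $t$, and Markovianity of $P^*$. It then concludes as in \cite[Corollary 3.8]{La}: because $b,\sigma,\beta,f$ depend on the input measure only through its time-marginals $\mu_t$ (resp.\ $\mu_{t-}$) and $g$ only through $\mu_T$, both the admissibility correspondence $\cR(\cdot)$ and the cost $J(\cdot,\cdot)$ depend on the measure only through its marginal flow; since $\mu^*:=P^*\circ X^{-1}$ has the same marginal flow as $\mu$, one gets $\cR^*(\mu^*)=\cR^*(\mu)\ni P^*$, i.e.\ $P^*$ is a relaxed MFG solution with respect to its \emph{own} path law, and no equality of path laws is ever needed. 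Your strict-Markovian step otherwise mirrors the paper's appeal to Lacker's selection argument (with the same implicit caveat for both: the jump term of $L$ is not linear in $\beta$, so ``same averaged coefficients'' does not literally give ``same generator'' as it does in the pure diffusion case), but the fixed-point conclusion there suffers from the same path-law issue and should likewise be repaired via the marginal-dependence argument above.
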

\begin{proof}  
Let $P$ be a relaxed MFG solution. In order to show the existence of a Markovian MFG solution, first we exhibit a (possibly different) probability measure $P^*\in\cR(\mu)$, with $\mu := P \circ X^{-1}$, such that the following holds:
\begin{myprop}{M}\label{prop:M}
\begin{enumerate}[(\ref{prop:M}.1)]
\item \label{ott}$J(\mu,P^*)\le J(\mu,P)$;
\item \label{pfcond}$P^* \circ X^{-1}_t = P\circ X^{-1}_t$ for all $t\in[0,T]$;
\item \label{Markoviancond}$P^*  (\Gamma(dt,d\al)= \widehat \Gamma (t,X_{t-})(d\al)dt)=1$ for a measurable function $\widehat \Gamma \colon[0,T]\times \R^d \to \mathcal P(A)$.
\end{enumerate}
\end{myprop}
Condition (\ref{prop:M}.\ref{ott}) implies that under this new probability measure $P^*$ the expected cost is minimized with respect to all the admissible laws, i.e. $P^*\in\cR^*(\mu)$, whereas (\ref{prop:M}.\ref{pfcond}) assures that the marginals of $X$ are preserved under the new probability $P^*$. Condition (\ref{prop:M}.\ref{Markoviancond}) guarantees the Markovianity of $P^*$.

To obtain a Markovian MFG solution, we apply the mimicking results in \cite[Corollary 4.9]{kurtz1998existence}. In order to do that, we  need to check that $L$ satisfies properties (i)-(vi) in \cite[pp. 611-612]{kurtz1998existence}.\footnote{The results in \cite{kurtz1998existence} are established for a time-homogeneous generator. However, results in Section 4.2 of that article can be applied to our setting by suitably choosing the space of controls in \cite{kurtz1998existence} as $U=[0,T]\times \mathcal P^p (\R^d) \times A$, the relaxed control being as $\Lambda_t (du) = \delta_{(t,\mu_t)}(du^1,du^2) \Gamma_t (du^3)$, for all $u=(u^1,u^2,u^3)\in U$.}  Let us note that assumptions (i)-(iii) are easily checked, while our assumption \eqref{hyp-kurtzstock} readily implies (v). It remains to show that (iv) (positive maximum principle, cf. \cite{EK}, page 165) is also fulfilled. Let $\phi\in C_0 ^\infty(\mathbb R^d)$ and $x_0\in \mathbb R^d$ be such that $\sup_{x\in \mathbb R^d}\phi(x)=\phi(x_0) \ge 0$, then we have
\begin{align*}
L\phi(t,x_0 ,\mu , \al)&= b(t ,x_0 ,\mu , \alpha)^\top D \phi(x_0 ) +\frac{1}{2}\tr(\sigma \sigma^\top)(t ,x_0 ,\mu ,\alpha)D^2 \phi(x_0)\\
& \quad + \left[\phi(x_0 +\beta(t ,x_0 ,\mu , \alpha))-\phi(x_0 )- \beta(t, x_0 ,\mu ,\al)^\top D \phi (x_0)\right] \lambda(t) \\
& =  \frac{1}{2}\tr(\sigma \sigma^\top)(t ,x_0 ,\mu ,\alpha)D^2 \phi(x_0) + \left[\phi(x_0 +\beta(t ,x_0 ,\mu , \alpha))-\phi(x_0 )\right] \lambda(t),  \nonumber
\end{align*}
which is negative for all $(t,\mu,\al) \in [0,T]\times \mathcal P^p (\R^d) \times A$ since $\lambda (t) > 0$ for all $t \in [0,T]$. Therefore, we can apply the very general results established in \cite[Corollary 4.9]{kurtz1998existence} granting the existence of a process $Y$, defined on some filtered probability space $(\widehat \Omega, \widehat{\mathbb F}, \widehat P)$, and of a measurable function $\widehat \Gamma : [0,T]\times \R^d \to \mathcal P(A)$ such that
\[ \cM^{\mu,\phi}_t(\widehat \Gamma ,Y)=\phi(Y_t)-\int_{[0,t]\times A} L\phi(s,Y_s, \mu_{s},\al) \widehat \Gamma (s,Y_s, d\al) ds, \quad t \in [0,T], \]
is an $\widehat{\mathbb F}$-adapted $\widehat P$-martingale for all $\phi \in C^\infty _0 (\R^d)$. Moreover
\begin{equation}\label{eq-law} \widehat P \circ (Y_t , \widehat \Gamma (t,Y_t, d\al)) = P\circ (X_t, \E^P [\Gamma_t (d\al) | X_t]), \quad t \in [0,T].\end{equation} 
Let $P^* := \widehat P \circ (\widehat \Gamma(t,Y_t)dt,Y)^{-1}$. Hence $P^*\in \cR(\mu)$ and it satisfies conditions~(M.\ref{pfcond}) and (M.\ref{Markoviancond}) by construction. Moreover we have that
\begin{align*}
J(\mu,P^*)&=\E^{\widehat P}\left[\int_{[0,T]\times A}f(t,Y_t,\mu_t,\al)\widehat\Gamma(t,Y_t)(d\al)dt + g(Y_T,\mu_T) \right]\\
	&\overset{(a)}{=}\E^{ P}\left[\int_{[0,T]\times A}f(t,X_t,\mu_t,\al)\widehat\Gamma(t,X_t)(d\al)dt + g(X_T,\mu_T) \right]\\
	&\overset{(b)}{=}\E^{ P}\left[\int_{[0,T]\times A}\E^{ P}\left[f(t,X_t,\mu_t,\al)\Gamma_t(d\al) \bigg\lvert X_{t}\right] dt + g(X_T,\mu_T) \right]\\
	&\overset{(c)}{=}\E^{P}\left[\int_{[0,T]\times A}f(t,X_t,\mu_t,\al)\Gamma_t(d\al)dt + g(X_T,\mu_T ) \right]\\
	&=J(\mu ,P),
\end{align*}
where equality (a) follows from the equivalent distribution of the processes involved, i.e. $\widehat P \circ Y^{-1}_t = P \circ X^{-1}_t$ for all $t\in [0,T]$, equality (b) is provided by \eqref{eq-law} and equality (c) is just the tower property of conditional expectations. Therefore $P^*$ satisfies condition~(M.\ref{ott}) so that $P^* \in \cR^*(\mu)$. By proceeding as in the proof of \cite[Corollary 3.8]{La}, we can easily conclude that $P^* \in \cR^* (\mu^*)$ with $\mu^* := P^* \circ X^{-1}$, i.e. $P^*$ is a Markovian relaxed MFG solution. \medskip

For the existence of a \emph{strict} Markovian MFG solution, assume that the set $K(t,x,\mu)$ defined in \eqref{eq:setK} is convex for all $(t,x,\mu) \in [0,T] \times \mathbb R^d \times \cP^p (\R^d)$. Hence by applying the same arguments as in the second part of the proof of Theorem 3.7 in \cite{La}, we get the existence of a measurable function $\widehat \gamma: [0,T] \times \mathbb R^d \to A$ such that $\widehat \Gamma (t,x)(d\al) = \delta_{\widehat \gamma(t,x)}(d\al)$.
\end{proof}

\begin{remark}
It is worth noticing that property \eqref{hyp-kurtzstock} is guaranteed, for instance, whenever the coefficients $b,\sigma, \beta$ are bounded in all their variables. In order to prove the existence of a Markovian MFG solution beyond that assumption, one could extend the mimicking approach taken in \cite{BS} to a jump-diffusion model like ours. However, such an approach, which consists in solving the problem first in discrete time and then pass to the limit along finer and finer partition, is quite a delicate one and it seems to rely heavily on path continuity. Finally, analogue mimicking results have been proved in \cite{BC} for jump-diffusions dynamics. Unfortunately, their assumptions on the jump measure are too strong for our setting and not so easy to check when compared to Kurtz and Stockbridge \cite{kurtz1998existence} results.    
\end{remark}


\section{Application: a toy model for an illiquid inter-bank market}
\label{sec:ToyModel}

In this section we consider a symmetric $n$-player game with controlled jumps fitting the general framework studied in the previous sections. The model shows the economic relevance of the general MFG considered in the previous section and, furthermore, it is simple enough to allow for explicit computations of the equilibrium in $n$-player case as well as when $n \to \infty$. The convergence to the MFG solution is also studied. The model is in our opinion an interesting variation of the systemic risk model proposed by Carmona and coauthors in \cite{CFS}, parsimoniously modified for including some illiquidity phenomenon.

More precisely, we consider $n \ge 2$ banks which lend to and borrow from a central bank in an interbank borrowing market. The aim of each bank is to keep its monetary reserves away from critical levels. After the financial crisis of 2008, banks are required by international regulation to store an adequate amount of liquid assets, cash, to manage possible market stress. At the same time, holding too much cash is costly for banks because of its low return. We will use the average monetary reserves of the system as  the benchmark for the reserves' level of each bank and so the cost function will penalize every deviation from this mean value as in \cite{CFS}.

The main difference with the model in \cite{CFS} is that therein the banks can control their reserves continuously over time, i.e. they can choose a rate at which they lend or borrow money, while in the present paper the interbank market is \emph{illiquid}. This means that the banks can borrow or lend money only at some exogenously given instants, that are modelled as jump times of a Poisson process with a certain intensity $\lambda >0$. The intensity can be viewed as an health indicator of the whole system: for instance, when the intensity is low, the probability of being able to control the reserves in the next instant is also low, the system is becoming very illiquid. This kind of situation typically arises during financial crises.

Let us turn to the mathematical description of the model. Let $X^i=(X^i_t)_{t\in[0,T]}$  denote the monetary log-reserves of bank $i$, whose evolution is given by
\begin{equation}
\label{diffpro}
dX_t^i=\frac{a}{n} \sum_{j=1}^n (X_t^j-X_t^i)\,dt +\sigma\,dW_t^i + \gamma_{t} ^i dN^i_t \,,\quad i=1,\dots,n,
\end{equation} 
where $(W^1,\ldots,W^n)$ is an $n$-dimensional Brownian motion and $(N^1,\ldots, N^n)$ is an $n$-dimensional Poisson process, each component with a constant intensity $\lambda >0$. We assume that the system starts at time $t=0$ from i.i.d. random variables $X_0^i=\xi^i$ such that $\mathbb E[\xi^i]=0$ for all $i=1,\ldots, n$, and that initial conditions, Brownian motions and Poisson processes are all independent.

The main difference with respect to \cite{CFS} is that the control $\gamma_t ^i$ appears only in the jump component, hence the bank $i$ cannot change its reserves continuously over time but only at the jump times of the Poisson process $N^i$. Notice that while the banks can borrow/lend money at different times, being the $N^i$'s independent, the jump intensity $\lambda$ is the same for each bank. 

We denote by $\bar{X}_t$ the empirical mean of the monetary log-reserves, i.e.
\[
\bar X_t=\frac{1}{n}\sum_{i=1}^n X_t^i\,, 
\]
hence the dynamics of bank $i$ reserves can be rewritten in the mean field form as
\[
dX_t^i= a(\bar X_t -X_t^i) dt+\sigma\,dW_t^i + \gamma_{t} ^i dN^i_t \,,\quad i=1,\dots,n,
\]
and  the dynamics of the average state, $\bar X_t$, can be expressed as
\[
d\bar X_t =\frac{1}{n}\sum_{k=1}^n dX^k_t =\frac{\lambda}{n}\sum_{k=1}^n\gamma_t^k\,dt+\frac{\sigma}{n}\sum_{k=1}^n dW^k_t+\frac{1}{n}\sum_{k=1}^n\gamma_{t}^k\,d\widetilde N^k_t .
\]
The dynamics of the monetary reserves are coupled through their drifts by means of the average state of the system as in \cite{CFS}. Let $X=(X^1,\ldots,X^n)$ and $\gamma=(\gamma^1,\dots,\gamma^n)$. Bank $i$ controls the size of the jumps (if any) at time $t$, $\gamma_t^i$, in order to minimize a cost functional $J^i$ given by
\[
J^i(\gamma)=J^i(\gamma^1,\ldots,\gamma^n)=\mathbb{E}\bigg[\int_0^T f^i(X_t,\gamma_t^i)\,dN^i _t+g^i(X_T)\bigg] = \mathbb{E}\bigg[\int_0^T \lambda f^i(X_t,\gamma_t^i)\,dt+g^i(X_T)\bigg]\,,
\]
where $f^i$ and $g^i$ are quadratic functions as in \cite{CFS}, namely the running cost function $f^i \colon \R^n\times \R \to \R$ is given by
\[
f(x,\gamma^i) = \frac{1}{2}(\gamma^i)^2-\theta\gamma^i(\bar x -x^i)+\frac{\varepsilon}{2}(\bar x-x^i)^2 ,
\]
where $\bar x=\frac{1}{n}\sum_{i=1}^n x^i$ and the terminal cost function $g^i \colon \R^n \to \R $ is
\[
g^i ( x) = \frac{c}{2}(\bar x -x^i)^2.
\]
Note that both player $i$'s cost functions depend on the other players' strategies through the mean, i.e. $f^i(x,\gamma^i)=f(\bar x, x^i, \gamma^i)$ and $g^i(x)=g(\bar x, x^i)$.
The parameter $\theta>0$ is to control the incentive to borrowing or lending: the bank $i$ will want to borrow (i.e. $\gamma^i_t>0$) if $X^i_t$ is smaller than the empirical mean $\bar X_t$ and lend (i.e. $\gamma^i_t<0$) if $X^i_t$ is greater than the mean $\bar X_t$. We have taken the shape of the objective functions from \cite{CFS} (see therein for more details on the financial interpretation of such cost functions).

The parameters $\varepsilon$ and $c$ are strictly positive, so that the quadratic terms $(\bar x - x^i)^2$ in both costs penalize departure from the average. Moreover we assume that 
\[
\theta^2\le\varepsilon ,
\]
which guarantees the convexity of $f^i(x,\gamma)$ in both variables.

\begin{remark}\label{rmk:model-vs-theory}
Observe that the interbank illiquid model of this section fits the general theory developed in the previous sections only in the case $\varepsilon =0$ as running costs which are quadratic in the measure $\mu$ are ruled out by Assumption (\ref{ass:A}.\ref{ass:a:growth:f:g}). However, since one of the main goal of this section is giving an explicit characterization of a Nash equilibrium (in the $n$-player game) and an MFG solution (for the limit problem), we will be using other techniques based on forward/backward SDEs, allowing to treat the case $\varepsilon >0$ as well without any additional effort. Indeed, the approach followed in the previous sections is very powerful in giving abstract existence results, while not very suitable to compute the equilibria. 
\end{remark}

In the next sub-sections, we will compute a Nash equilibria in open-loop as well as closed-loop form (see Remark \ref{rem:CLp} below). Moreover, we will study the corresponding MFG when $n \to \infty$, whose solution will be strict Markovian. Our approach is heavily based on BSDE and it follows closely the one in \cite{CFS}. Nonetheless, we give all details at least in the open-loop case for reader's convenience. Finally, we will conclude with some simulations and some financial comments on the model. 

\subsection{Nash equilibrium in open-loop strategies}
\label{ssec:OLp}
We are searching for a Nash equilibrium among all admissible open-loop strategies $\gamma_t=\{\gamma^i_t, i=1,\dots,n\}$, that are real-valued predictable processes satisfying the integrability condition  $\mathbb{E}\big[\int_0^T | \gamma^i_t | dt\big]<\infty$ for all $i=1,\dots,n$. We denote the set of all such admissible controls by $\cA$.
  
For the problem of the $i$-th bank, we consider the Hamiltonian
\[
H^i (t,x,\gamma,y^i,q^i,r^i):[0,T]\times\mathbb{R}^n \times \R^n \times\R^n \times\R^{n\times n}\times \R^{n\times n}\to\mathbb{R}
\]
defined by
\begin{align}
\label{HamiA}
H^i(t,x,\gamma,y^i,q^i,r^i)&=\lambda f^i(x,\gamma)+(a(\bar x-x)+\lambda\gamma)\cdot y^i+\sigma \tr( q^i)+   \gamma\cdot \text{diag}(r^i) \notag\\
&=\lambda\left(\frac{(\gamma^i)^2}{2}-\theta(\bar x-x^i)\gamma^i+\frac{\varepsilon}{2}(\bar x-x^i)^2\right)+\sum_{k=1}^n \left[a(\bar x-x^k)+\lambda \gamma^k\right]y^{i,k}\\
&\hspace{5cm}+\sigma\sum_{k=1}^n q^{i,k,k}+ \sum_{k=1}^n \gamma^kr^{i,k,k}\,.\notag
\end{align}
The adjoint processes $Y^i_t=\{Y^{i,k}_t : k=1,\dots,n\}$, $Q^i_t=\{Q^{i,k,j}_t : k,j=1,\dots,n\}$ and $R^i_t=\{R^{i,k,j}_t : k,j=1,\ldots,n\}$ are defined as the solutions of the following BSDEs with jumps 
\begin{equation}
\label{adjiA}
\begin{cases}
dY_t ^{i,k}=-\frac{\partial H^i(t,X_t,\gamma_t,Y^i_t,Q^i_t,R^i_t)}{\partial x^k}\,dt+\sum_{j=1}^n
Q^{i,k,j}_t \,dW^j_t+\sum_{j=1}^n R^{i,k,j}_{t}\,d\widetilde N^j_t\\
Y^{i,k}_T=\frac{\partial g^i}{\partial x^k} (X_T)\,,
\end{cases}
\end{equation}
for $k=1,\dots,n$.

In order to find a candidate for the optimal control $\hat \gamma^i$, it suffices to minimize the Hamiltonian $H^i$ with respect to $\gamma^i$, leading to
\begin{equation}
\label{canA}
\hat \gamma^i=\theta(\bar x-x^i)- y^{i,i} - \frac{1}{\lambda} r^{i,i,i} \,.
\end{equation}
In order to prove that $\hat\gamma=(\hat\gamma^1,\dots,\hat\gamma^n)$ is a Nash equilibrium we show that when the other players $j \neq i$ are following $\hat\gamma^j$, then $\hat\gamma^i$ is the best response of player $i$. To do that, we need to solve the BSDE with jumps \eqref{adjiA}. It is natural to consider the ansatz
\begin{equation}
\label{ansA}
Y_t^{i,k}=\bigg(\frac{1}{n}-\delta_{i,k}\bigg)(\bar X_t-X^i_t)\phi_t ,
\end{equation}
where $\delta_{i,j}$ is the Kronecker delta and $\phi$ is a deterministic scalar function of class $C^1$, satisfying the terminal condition $\phi_T=c$, so that $Y^{i,k}_T=\frac{\partial g^i}{\partial x^k} (X_T)=c (\frac{1}{n}-\delta_{i,k} )(\bar X_T-X^i_T)$\,.
Differentiating the ansatz, we have that $Y^{i,k}$ solves the following SDE:
\begin{align}
dY_t^{i,k}&=\left(\frac{1}{n}-\delta_{i,k}\right)\phi_t d(\bar X_t-X^i_t)+\bigg(\frac{1}{n}-\delta_{i,k}\bigg)(\bar X_t-X^i_t)\dot\phi_t\,dt\notag\\
&=\bigg(\frac{1}{n}-\delta_{i,k}\bigg)\big[\lambda\phi_t(\bar \gamma_t-\gamma^i_t)+(\dot\phi_t-a\phi_t)(\bar X_t-X^i_t)\big]\,dt+\left(\frac{1}{n}-\delta_{i,k}\right)\phi_t\sigma\left(\frac{1}{n}\sum_{j=1}^n dW^j_t-dW^i_t\right)\notag\\
\label{deransA}&\hspace{0.3cm}+\bigg(\frac{1}{n}-\delta_{i,k}\bigg)\phi_{t} \left( \frac{1}{n}\sum_{j=1}^n \gamma^j_{t}\,d\widetilde N^j_t-\gamma^i_{t}\,d\widetilde N_t^i\right)\,,
\end{align}
where $\bar\gamma_t=\frac{1}{n}\sum_{k=1}^n \gamma^k_t$ denotes the average value of all control processes.
Comparing \eqref{adjiA} and \eqref{deransA} under the ansatz \eqref{ansA} yields
\begin{gather}
Q^{i,k,j}_t=\sigma\left(\frac{1}{n}-\delta_{i,k}\right)\left(\frac{1}{n}-\delta_{i,j}\right)\phi_t	\label{ansQ},\\
R^{i,k,j}_t= \left(\frac{1}{n}-\delta_{i,k}\right)\left(\frac{1}{n}-\delta_{i,j}\right)\phi_{t} \gamma^j_t ,\label{AnsR}
\end{gather}
for all $k,j=1,\dots,n$.

Moreover, by \eqref{ansA} and \eqref{AnsR}, it follows that the candidate $\hat \gamma^i$ given in $\eqref{canA}$ solves
\[
\hat \gamma^i_t=\theta(\bar X_{t-}-X_{t-}^i)-\bigg(\frac{1}{n}-1\bigg)\phi_{t}(\bar X_{t-}-X^i_{t-}) -\frac{1}{\lambda }\left(\frac{1}{n}-1\right)^2\phi_{t} \hat \gamma^i_t\,,
\]
and therefore the candidate optimal best response $\hat\gamma^i$ turns out to be
\begin{equation}
\label{can+ansA}
\hat \gamma^i_t=\frac{\theta+\left(1-\frac{1}{n}\right)\phi_{t}}{1+\frac{1}{\lambda}\left(1-\frac{1}{n}\right)^2\phi_{t}}(\bar X_{t-}-X^i_{t-})\,.
\end{equation}
To complete the description of $\hat \gamma^i$, we need to provide a characterization of the function $\phi$. From the definition of the Hamiltonian $H^i$ in \eqref{HamiA} it follows that the drift coefficient in the SDE \eqref{adjiA} for $Y$ as adjoint process is given by
\[
-\frac{\partial H^i(t,x,\gamma,y^i,q^i,r^i)}{\partial x^k}=\lambda\theta\bigg(\frac{1}{n}-\delta_{i,k}\bigg) \gamma^i-\lambda\varepsilon\bigg(\frac{1}{n}-\delta_{i,k}\bigg)(\bar x-x^i)-\frac{a}{n}\sum_{j=1}^n (y^{i,j}-y^{i,k})\,,
\]
and therefore, under the ansatz \eqref{ansA} and the related implications, equation \eqref{adjiA} becomes
\begin{eqnarray}
dY^{i,k}_t &=& \left(\frac{1}{n}-\delta_{i,k}\right) \left[\frac{\lambda\theta^2+\lambda\theta\left(1-\frac{1}{n}\right)\phi_t}{1+\frac{1}{\lambda}\left(1-\frac{1}{n}\right)^2\phi_t}-\lambda\varepsilon + a\phi_t\right](\bar X_t-X^i_t)\,dt\\
&& +\sum_{j=1}^n \left( Q^{i,k,j}_t \,dW^j_t+R^{i,j,k}_{t}\,d\widetilde N^j_t\right)\label{adjiA+ansA}\;. \nonumber
\end{eqnarray}
Since both equations \eqref{deransA} and \eqref{adjiA+ansA} hold simultaneously, we have that the following equality holds
\begin{gather*}
\dot\phi-a\phi-\frac{\theta+ \left(1-\frac{1}{n}\right)\phi}{1+\frac{1}{\lambda}\left(1-\frac{1}{n}\right)^2\phi}\phi=
\frac{\lambda\theta^2+\lambda\theta\left(1-\frac{1}{n}\right)\phi}{1+\frac{1}{\lambda}\left(1-\frac{1}{n}\right)^2\phi}-\lambda\varepsilon + a\phi_t
\end{gather*}
and this implies that $\phi_t$ must solve the ODE
\begin{multline}
\label{odephi}
 \left(1+\frac{1}{\lambda}\left(1-\frac{1}{n}\right)^2 \phi_t \right)\dot\phi_t = \left[\lambda+\frac{2a}{\lambda}\left(1-\frac{1}{n}\right)\right]\left(1-\frac{1}{n}\right)  \phi_t^2\\
  +\left[\lambda\theta\left(2-\frac{1}{n}\right)-\varepsilon \left(1-\frac{1}{n}\right)^2+2a \right]\phi_t+\lambda(\theta^2-\varepsilon)\,, 
\end{multline}
with terminal condition $\phi_T =c$. For more details on how such an ODE can be solved at least in implicit form we refer the reader to Appendix \ref{sec:Behaviour}.


\subsection{Approximate Nash equilibria}
\label{ssec:LimitCase}
We conclude the theoretical study of our model by computing the solution of the MFG obtained from the $n$-player game as $n\to \infty$. In particular, we will see that the $n$-player Nash equilibria computed in the previous sections tend towards the MFG solution.

Let $m \in \cD$ be a given \cadlag function, which represents a candidate for the limit of $\mathbb E[\bar X_t]$ when $n \to \infty$. Consider the following one-player minimization problem
\[
\inf_{\gamma}\mathbb{E}\left[\int_0^T \lambda\left(\frac{\gamma_t^2}{2}-\theta\gamma_t(m(t)-X_t)+\frac{\varepsilon}{2}(m(t)-X_t)^2\right)\,dt+\frac{c}{2}(m(T)-X_T)^2\right]
\]
subject to the dynamics
\begin{equation}
\label{SDEXlc}
dX_t=[a(m(t)-X_t)+\lambda \gamma_t]\,dt+\sigma \,dW_t+ \gamma_{t}\,d\widetilde N_t, \quad X_0=x_0,
\end{equation}
where $W$ is a standard Brownian motion and $N$ a Poisson process with intensity $\lambda>0$. The processes $W$ and  $N$ are assumed to be independent. Then, for a given \cadlag function $m \in \cD$, the optimal control $\hat\gamma$ is chosen in the class of admissible controls, which are real-valued predictable processes $\gamma$ such that $\mathbb E[\int_0 ^T |\gamma_t| dt] < \infty$. Moreover, in order for $\hat \gamma$ to be a MFG solution, the mean field condition has to be satisfied, i.e. $\E[X^{\hat\gamma}_t]=m(t)$ for a.e. $t\in [0,T]$.

As in the $n$-player case, we solve the problem via the Pontryagin maximum principle. In this case, the Hamiltonian is given by
\[
H(t,x,y,q,r,\gamma)=\lambda\left(\frac{\gamma^2}{2}-\theta\gamma(m(t)-x)+\frac{\varepsilon}{2}(m(t)-x)^2\right)+[a(m(t)-x)+\lambda\gamma]y+\sigma q+  \gamma r
\]
and the first order condition implies that the candidate optimal strategy is
\[
\hat\gamma=\theta(m(t)-x)- y - \frac{r}{\lambda}\,.
\]
The corresponding adjoint forward-backward equations are as follows: the forward process $X$ solves
\begin{equation}\label{FSDEX}
\begin{cases}
dX_t=[(a+\lambda\theta)(m(t)-X_t)-\lambda Y_t - R_t]\,dt+\sigma\,dW_t+ \left [\theta(m(t-)-X_{t-})-\left(Y_{t-}+\frac{R_{t}}{\lambda}\right)\right]\,d\widetilde N_t\\
X_0=x_0
\end{cases}
\end{equation}
while the BSDE for the adjoint processes $(Y,Q,R)$ is given by
\begin{equation}\label{BSDEY}
\begin{cases}
dY_t=\left[(a+\lambda\theta)Y_t+ \theta R_t + \lambda(\varepsilon- \theta^2)(m(t)-X_t)\right]\,dt+Q_t\,dW_t+R_{t}\,d\widetilde N_t
\\
Y_T=c(X_T-m(T)).
\end{cases}
\end{equation}
Note that this time the optimization problem is one-dimensional and therefore $Y$, $Q$ and $R$ are real-valued stochastic processes.

Since equations~\eqref{FSDEX}-\eqref{BSDEY} are linear we can firstly solve for the expectations $\E[X_t]$  and $\E[Y_t]$. By taking the expectation in both sides of equations~\eqref{FSDEX} and \eqref{BSDEY} and using the martingale property of the integrals with respect to the Brownian motion and the compensated Poisson process, we have
\begin{gather*}
d\E[X_t]=\left( (a+\lambda\theta)(m(t)-\E[X_t])-\lambda \E[Y_t] - \E[R_{t }]\right) \,dt ,
\end{gather*}
which becomes
\begin{gather*}
dm(t)=(-\lambda \E[Y_t] - \E[R_{t}])\,dt ,\end{gather*}
under the mean field condition $m(t)=\E[X_t]$. 

In order to solve the BSDE \eqref{BSDEY}, we make the same ansatz as before, that is $Y_t=-\phi_t(m(t)-X_t)$, which by identification implies
\[
Q_t=\sigma\phi_t\,,\quad R_t= \frac{\theta+\phi_t}{1+\frac{1}{\lambda}\phi_t}\phi_t(m(t-)-X_{t-}),
\]
for some deterministic function $\phi$ of class $C^1$ with final value $\phi_T=c$, so that $Y_T=c(X_T-m(T))$ as required by the BSDE \eqref{BSDEY}. Notice that these processes can be obtained by those in \eqref{ansA}-\eqref{ansQ}-\eqref{AnsR} by letting $n\to\infty$. 

If we plug the ansatz in the BSDE \eqref{BSDEY}, we find that the process $Y_t$ solves the following SDE
\begin{equation}
\label{SDE1}
dY_t=\left(\lambda(\varepsilon-  \theta^2) - (a +\lambda \theta) \phi_t + \theta\frac{\theta+\phi_t}{1+\frac{1}{\lambda}\phi_t}\phi_t \right)(m(t) - X_t) dt+Q_t\,dW_t+R_{t}\,d\widetilde N_t
\end{equation}
and, at the same time,  by differentiating the ansatz $Y_t = -\phi_t (m(t)-X_t)$, we have that
\begin{align*}
dY_t&
=\left(-\dot\phi_t(m(t)-X_t) + \phi_t \left( \lambda \E[Y_t] + \E[R_t] + (a+\lambda \theta)(m(t)-X_t) - \lambda Y_t - R_t \right)  \right) dt\\
&\quad +Q_t\,dW_t+ R_{t} d\widetilde N_t
\\
&=\left( -\dot \phi_t + \phi_t \left( a+\lambda \theta + \lambda \phi_t - \phi_t \frac{\theta + \phi_t}{1+\frac{1}{\lambda}\phi_t} \right) \right)(m(t)-X_t)dt+ Q_t dW_t+ R_{t} d\widetilde N_t ,
\end{align*}
where once again we used the identity $m(t)=\E[X_t]$ and the equalities $\E[Y_t]=0=\E[R_t]$, which are due to the fact that both processes are proportional to $m(t)-\E[X_t]$.
By matching the two SDEs for $Y$,  we find that $\phi_t$ solves the following Cauchy problem
\begin{equation}
\label{phiLC}
\begin{cases}
\left(1+\frac{1}{\lambda}\phi_t\right)\dot\phi_t= \left(\lambda + \frac{2a}{\lambda}\right)\phi^2_t +(2(a+\lambda \theta) -\varepsilon)\phi_t-\lambda(\varepsilon-\theta^2)
\\
\phi_T =c
\end{cases}
\end{equation}
and therefore the optimal control turns out to be
\begin{equation}\label{mfg-appl} \hat \gamma_t = \frac{\theta+\phi_{t}}{1+ \frac{1}{\lambda}\phi_{t}}(\mathbb E[X_{t-}]-X_{t-}), \quad t\in [0,T].
\end{equation}
Observe that this can also be obtained as limit for $n \to \infty$ of the Nash equilibrium computed before in the $n$-player game (see equations~\eqref{can+ansA} and \eqref{odephi}). Figure \ref{fig:PhiN} displays the behaviour of $\phi$, solution of the ODE \eqref{odephi}, for different values of players' number $n$. As $n$ increases, the graph of $\phi=\phi(n)$ quickly converges to the solution we found in the game with an infinite number of players, given in equation \eqref{phiLC}.

\begin{figure}[h!]
    \centering
    \includegraphics[width=1\textwidth]{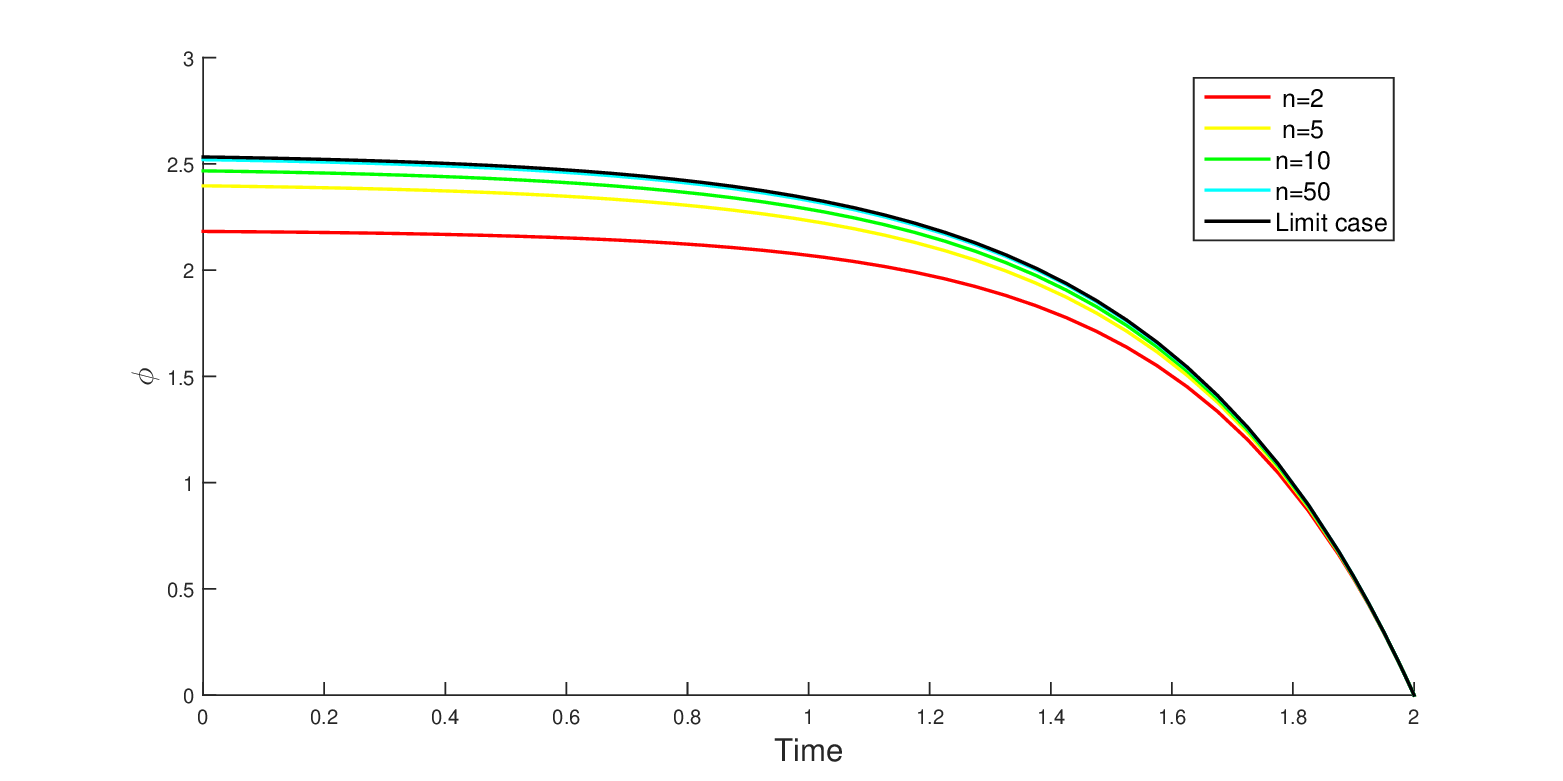}
    \caption{Plots of $\phi$, solution of the ODE \eqref{odephi} for different values of $n$. Model's parameter: $T=2$, $a=1$, $\theta=1$, $\varepsilon=10$, $\lambda=0.7$, $c=0$ }
    \label{fig:PhiN}
\end{figure}

\begin{remark}
\label{rem:CLp} In the closed-loop case, each player $i$ has complete information on the private states of the other participants, and therefore his best reply is chosen among all Markovian strategies of the form $\gamma^{i} (t,X_t)$, $t \in [0,T]$, for some real-valued function $\gamma_t ^i (t,x)$.
Following the same approach as before, based on the Pontryagin maximum principle and BSDEs with jumps, we obtain a Nash equilibrium in closed-loop form $\tilde \gamma^{i} $ given by
\begin{equation}
\label{eq:CLsol}
\tilde \gamma^i _t = \tilde \gamma^i (t, X_{t-}) =\frac{\theta+\left(1-\frac{1}{n}\right)\eta_{t}}{1+\frac{1}{\lambda}\left(1-\frac{1}{n}\right)^2\eta_{t}}(\bar X_{t-}-X^i_{t-}) , \quad i=1,\ldots,n,
\end{equation}
where the function $\eta_t$, analogously as in the open loop case, solves a suitable ODE with the same terminal condition as before, i.e. $\eta_T =c$. We do not develop further on this since, qualitatively speaking, the two equilibria, open loop and closed loop, produces the same kind of behaviour, which is illustrated by some numerical experiments in the next section in the open loop case. Moreover, in the closed loop case too we have convergence of the $n$-player Nash equilibrium above towards the MFG solution \eqref{mfg-appl} as $n \to \infty$. Therefore, we have decided to focus on the differences between our model and \cite{CFS} which are due to illiquidity. 
\end{remark}

\subsection{Simulations}
Previous computations in Section~\ref{ssec:OLp} and Remark \ref{rem:CLp} shows that the open-loop optimal strategies (see equation~\eqref{can+ansA}) have the form
\[
\hat\gamma^i _t=\frac{\theta+\left(1-\frac{1}{n}\right)\phi_t}{1+\frac{1}{\lambda}\left(1-\frac{1}{n}\right)^2\phi_t}(\bar X_{t-}-X^i_{t-}), \quad t\in[0,T]\, ,
\] 
for some deterministic function $\phi$ solving a suitable ODE. In this section, we examine the dependence on the parameters of the model (in particular, $\lambda$) of the open-loop strategies and log-reserves at the Nash equilibrium computed before. The same analysis applied to the closed-loop case would lead to the same qualitative conclusions (cf. Remark \ref{rem:CLp}).

First, figure~\ref{fig:Scenario} shows a typical scenario of our model. It can be observed that the optimal strategy is such that at each jump time for the reserves of a given bank, i.e. when the bank can borrow or lend money, the reserves move closer to the average level $\bar X$ of the reserves in the system. This is clearly expected due to the fact that such an average is the benchmark for each bank. There are two reasons why they do not match exactly and the second one is linked to presence of jumps in the model. First, reaching $\bar X$ can be too costly. Second, the choice of each bank, say bank $1$, at time $t-$ depends on the difference between its reserve $X^1_{t-}$ and the average reserves $\bar X_{t-}$ immediately before time $t$, with the aim of reducing such difference. But at the same time, $\bar X$ might have a jump at time $t$, as a consequence of the jump in the reserves of bank $1$. So even if $X^1_{t}=\bar X_{t-}$  we could have $X^1_{t} \neq \bar X_{t}$.
\begin{figure}[h!]
    \centering
    \includegraphics[width=0.9\textwidth]{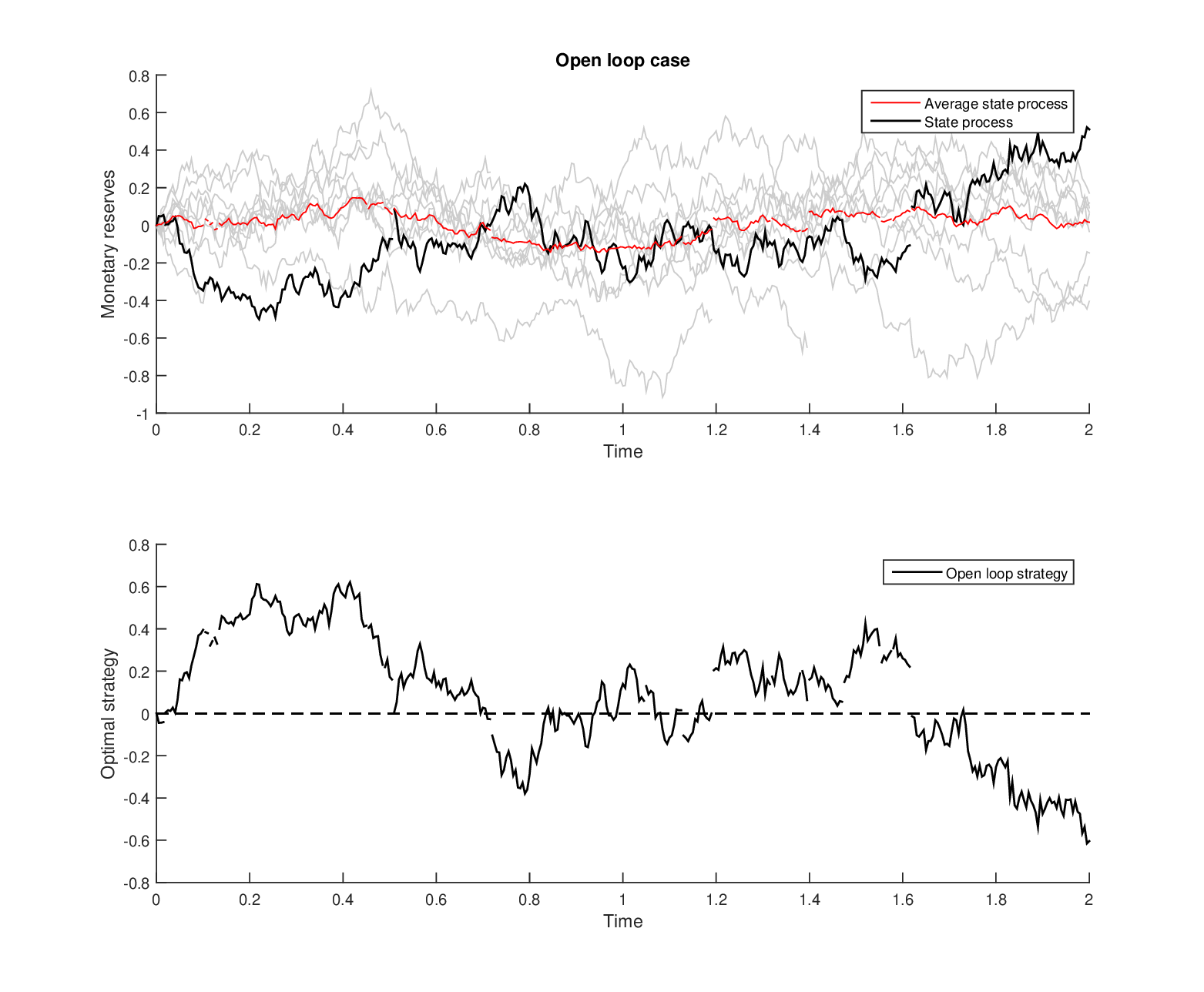}
    \caption{Typical scenario. Model's parameters: $n=10$, $T=2$, $a=1$, $\sigma=0.8$, $X_0=0$, $\theta=1$, $\varepsilon=10$, $c=0$, $\lambda=1.2$.}
\label{fig:Scenario}
\end{figure}

Now, let us focus on the variation of the equilibrium strategies due to changes in the intensity $\lambda$ of the Poisson processes, which we recall it represents the liquidity parameter of the inter-bank market. It is more convenient for the analysis to consider the function $\psi_t\colon[0,T]\to\mathbb{R}$ defined as
\[
\psi_t=\frac{\theta+\left(1-\frac{1}{n}\right)\phi_t}{1+\frac{1}{\lambda}\left(1-\frac{1}{n}\right)^2\phi_t} , \quad t\in [0,T],
\]
so that the open-loop optimal strategy can be expressed as $\hat\gamma^i _t=\psi_t (\bar X_{t-}-X^i_{t-})$, for $t\in[0,T]$. Hence, whenever one of the Poisson processes, say $N^i$, jumps, bank $i$ would modify its reserves by an amount $\hat \gamma^i _t$ which is proportional to the difference $\bar X_{t-} - X_{t-}^i$ just before the jump, with a proportionality factor $\psi_t$.

Then, routine computation shows that $\psi$ solves the following ODE:
\begin{align*}
\left(1-\frac{1}{\lambda}\left(1-\frac{1}{n}\right)\theta\right)^2 \dot\psi_t =\left[ \lambda+\frac{2a}{\lambda}\left(1-\frac{1}{n}\right) \right] \left(1-\frac{1}{\lambda}\left(1-\frac{1}{n}\right)\psi_t\right) (\psi_t-\theta)^2&\\
+\left[\lambda\theta\left(2-\frac{1}{n}\right) -\varepsilon\left( 1-\frac{1}{n} \right)^2 +2a\right] \left(1-\frac{1}{\lambda}\left(1-\frac{1}{n}\right)\psi_t\right)^2 (\psi_t-\theta)&\\
+\left[\left(1-\frac{1}{n}\right) \lambda(\theta^2-\varepsilon)\right] \left(1-\frac{1}{\lambda}\left(1-\frac{1}{n}\right)\psi_t\right)^3 ,
\end{align*}
with final value $\psi_T = \frac{\theta+\left(1-\frac{1}{n}\right)c}{1+\frac{1}{\lambda}\left(1-\frac{1}{n}\right)^2c}$. Notice that the terminal value is increasing in $\lambda$.

All our numerical experiments revealed that such a monotonicity behaviour propagates to the whole time interval, i.e. the proportionality factor $\psi_t$ is increasing in $\lambda$ for all $t \in [0,T]$. Here, in figures \ref{fig:psiL} to \ref{fig:psiLN100c1}, we show only the behaviour of $\psi$ as function of time $t \in [0,T]$ with $T=2$, $n \in \{10,100\}$, $c \in \{0,1\}$, and more importantly for different values of $\lambda$. Moreover, observe that (see Fig~\ref{fig:psiLc1}) the final value $\psi_T$ depends on the parameter $\lambda$ whenever $c$ is different than zero.

In the Nash equilibrium we have found, when $\lambda$ is small, hence the interbank market is very illiquid in the sense that banks will have (in expectation) very few possibilities to change their reserves, the reserves will change very little proportionally to $(\bar X_{t-} - X_{t-} ^i)$. On the other hand, when $\lambda$ is large, so that in expectation banks will have many occasions to lend/borrow money from the central bank, changes in their reserves will be very big proportionally to $(\bar X_{t-} - X_{t-} ^i)$. Therefore, focusing on the first case, we notice that instead of compensating the lack of liquidity ($\lambda$ small), banks seem to amplify it by borrowing and lending very little.

Another interesting feature one can notice from the figures is that when $\lambda$ is small, the proportionality factor $\psi_t$ is increasing in time. When the market is very illiquid, there are very few possibility for the banks to change their reserves during the time period, so that when the maturity $T$ is approaching, the banks knowing that they are running out of time to move their reserves closer to the average reserve $\bar X$, they amplify their efforts, whence an increasing $\psi_t$. An analogue interpretation can be provided for the opposite situation of a time-decreasing $\psi_t$ when $\lambda$ is large.

\begin{figure}[h!]
\centering
\includegraphics[width=\textwidth]{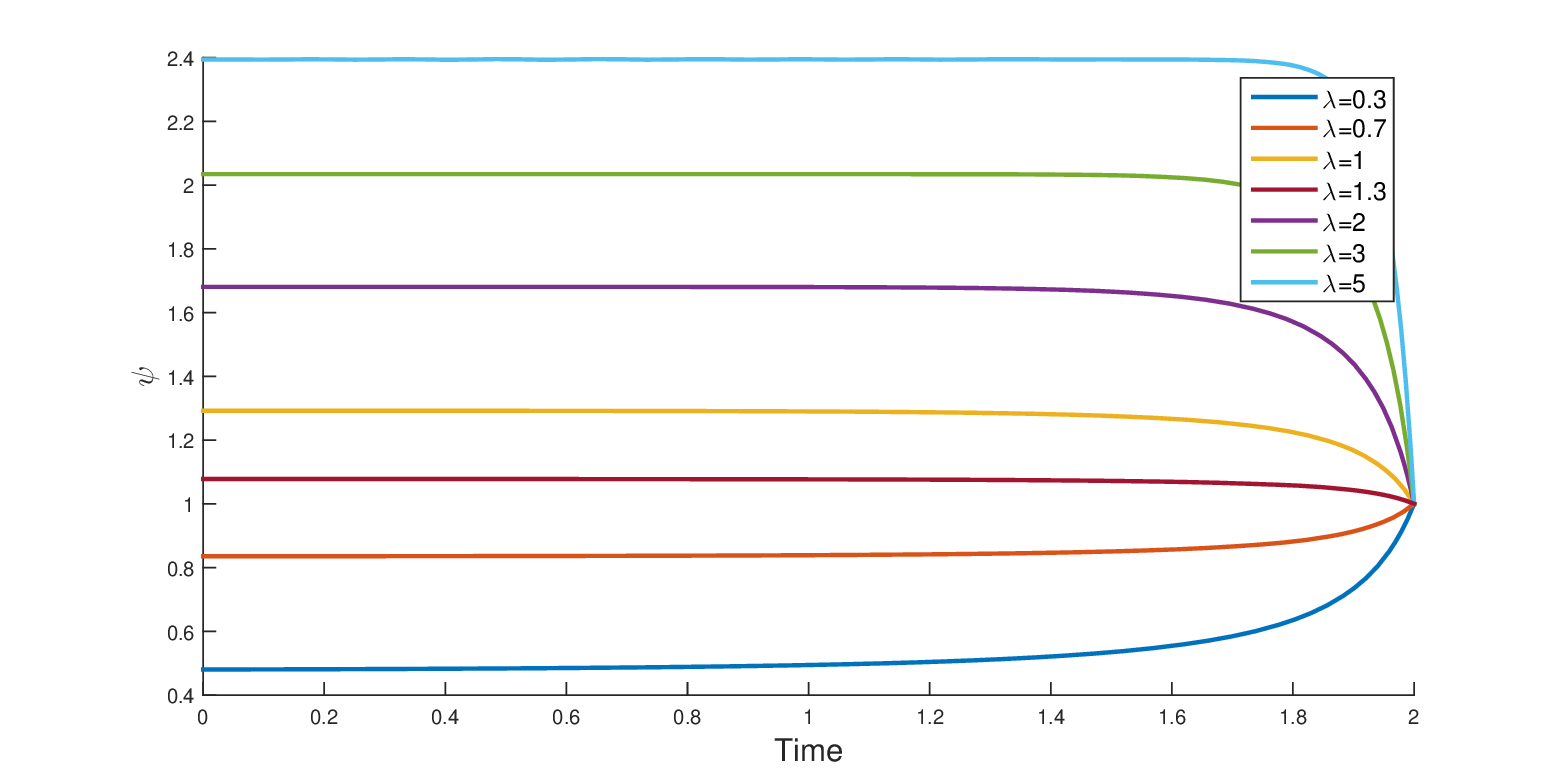}
\caption{Different values of $\lambda$. Model's parameters: $n=10$, $T=2$, $a=1$, $\theta=1$, $\varepsilon=10$, $c=0$.}
\label{fig:psiL}
\end{figure}

\begin{figure}[h!]
\centering
\includegraphics[width=\textwidth]{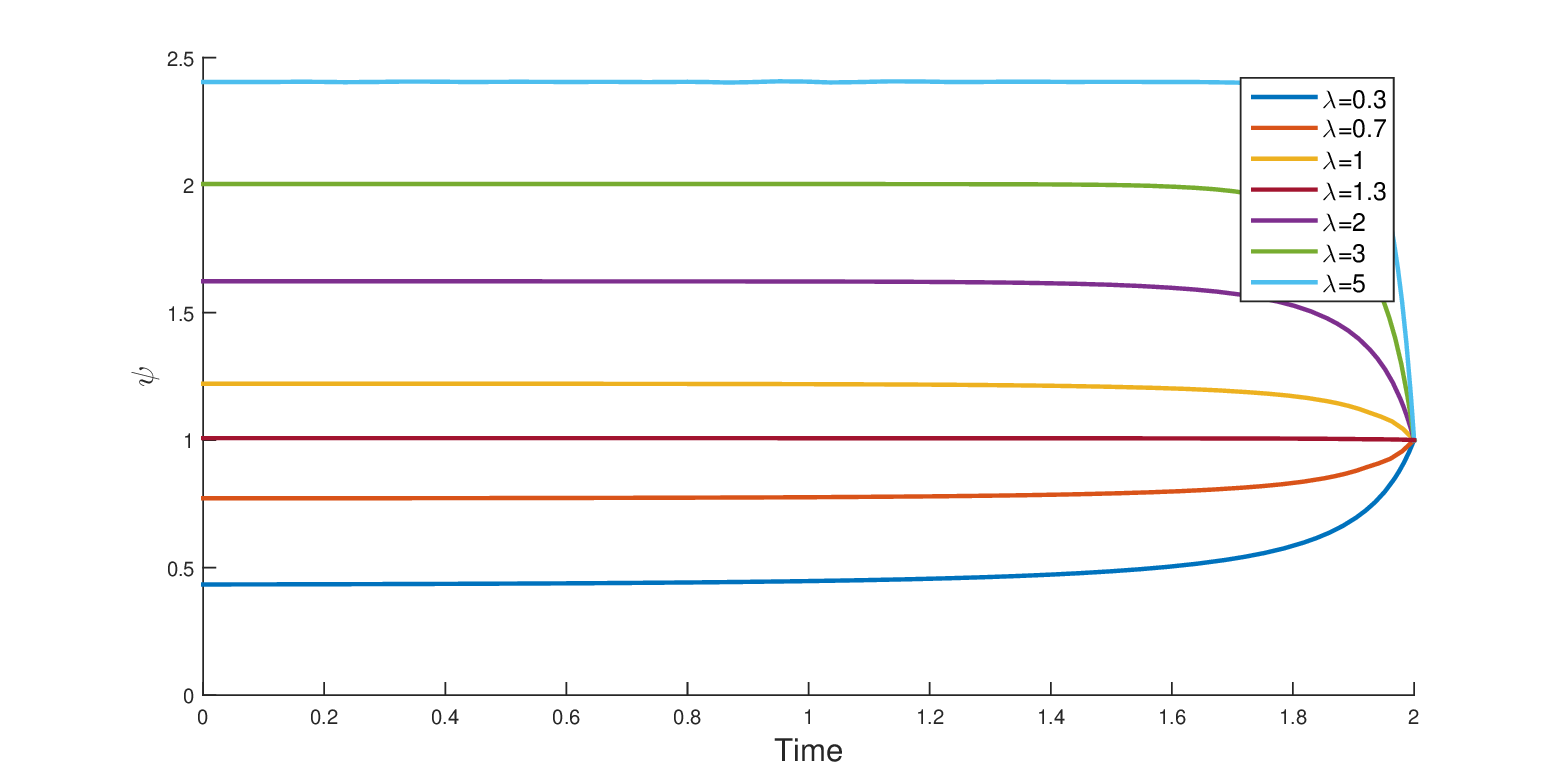}
\caption{Different values of $\lambda$. Model's parameters: $n=100$, $T=2$, $a=1$, $\theta=1$, $\varepsilon=10$, $c=0$.}
\label{fig:psiLN100c0}
\end{figure}

\begin{figure}[h!]
\centering
\includegraphics[width=\textwidth]{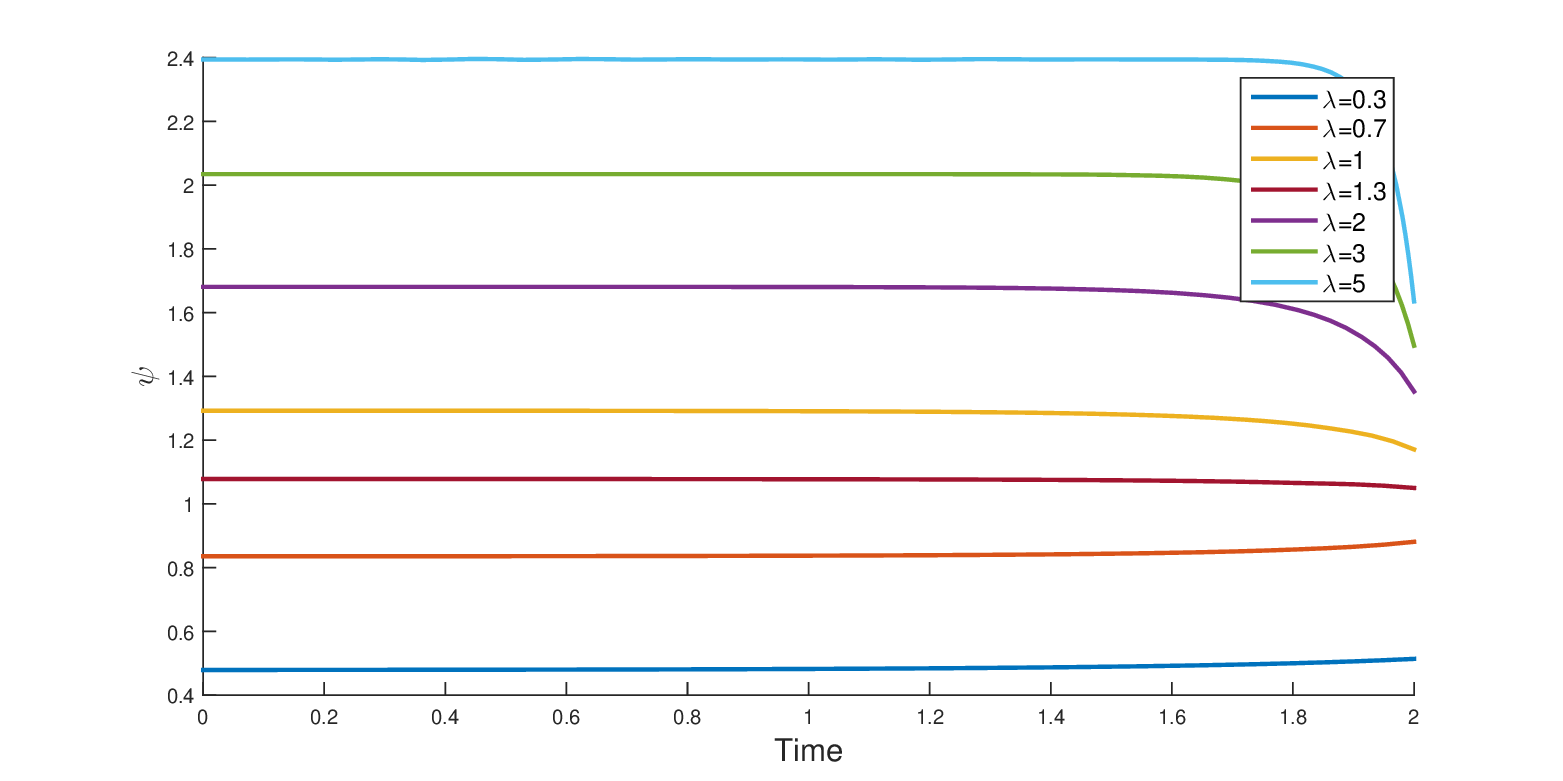}
\caption{Different values of $\lambda$. Model's parameters: $n=10$, $T=2$, $a=1$, $\theta=1$, $\varepsilon=10$, $c=1$.}
\label{fig:psiLc1}
\end{figure}

\begin{figure}[h!]
\centering
\includegraphics[width=\textwidth]{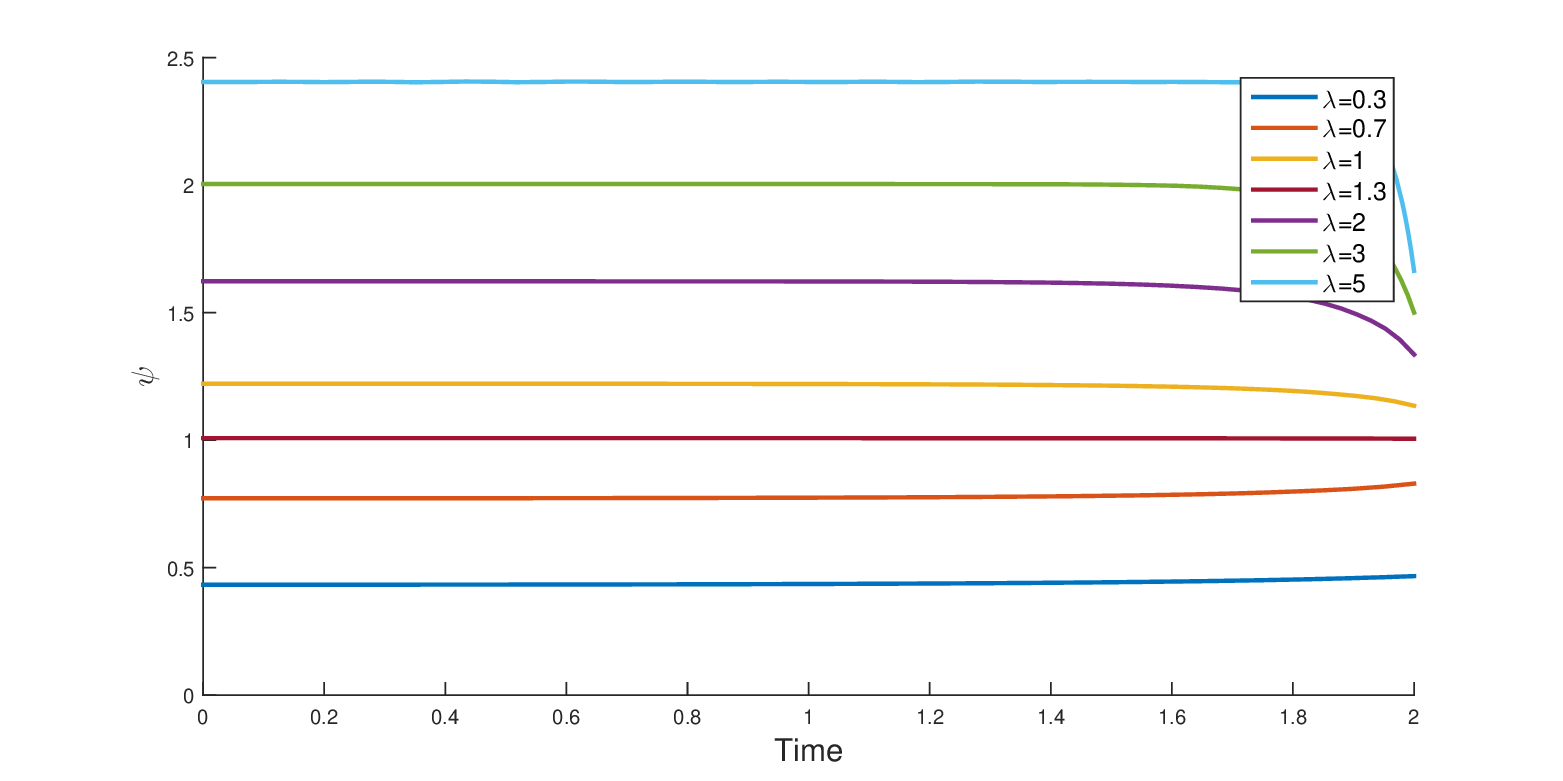}
\caption{Different values of $\lambda$. Model's parameters: $n=100$, $T=2$, $a=1$, $\theta=1$, $\varepsilon=10$, $c=1$.}
\label{fig:psiLN100c1}
\end{figure}

\section{Conclusions}
In this paper, we have generalised the controlled martingale problem approach developed in \cite{La} to MFGs with a multidimensional state variable following a jump-diffusion dynamics, where drift, diffusion coefficient and jump sizes are controlled. Some extra Lipschitz continuity in the measures for costs and coefficients was needed, due to presence of jumps in the flows of measures. Slightly more precisely, we have established existence of relaxed MFG solutions in the unbounded case as well of strict Markovian solutions under a sort boundedness assumption on the data of the problem. Moreover, using forward/backward SDE methods, we have studied a toy model for an illiquid interbank model in the spirit of \cite{CFS}. We have provided an explicit Nash equilibrium for the $n$-player game and the corresponding solution for the limit MFG and performed some numerics illustrating the properties of the equilibrium. Many further extensions are possible, here we mention the one that appears as the most appealing in view of applications: considering in the state variable dynamics a jump process with a compensator depending on the state, measure as well as strategies would pave the road to models whose behaviour of the players can affect directly or indirectly the frequency of jumps arrival. In this case, one would have to deal with non-standard SDE with jumps (as, e.g., in \cite{JP}), so that a different approach would need to be used. Hence, we have decided to postpone this further generalisation to future research.


\section*{Acknowledgements}
L. Campi and L. Di Persio would like to thank the FBK (Fondazione Bruno Kessler) - CIRM (Centro Internazionale per la Ricerca Matematica) for funding their ``Research in Pairs'' project ``McKean-Vlasov dynamics with L\'evy noise with applications to systemic risk'' for the period November 1-8, 2015, with the active participation of C. Benazzoli as well. Moreover L. Di Persio and C. Benazzoli would like to thank the ``Gruppo Nazionale per l'Analisi Matematica, la Probabilità e le loro Applicazioni (GNAMPA)'' for funding the projects ``Stochastic Partial Differential Equations and Stochastic Optimal Transport with Applications to Mathematical Finance'', coordinated by L. Di Persio (2016), and ``Metodi di controllo ottimo stocastico per l’analisi di problem debt management'', coordinated by A. Marigonda (2017). Both such projects supported the research object of the present paper.


\noindent
\printbibliography

\begin{appendix}

\section{Intermediate results for the existence of MFG solutions}
\label{sec:intermediateresults}

This appendix collects useful results which were used in the proof of Theorems~\ref{thm:relaxedMFGs} and \ref{thm:MarkovianMFGsol}.

\subsection{A-priori estimates for the state variables} In the following, we will write $\abs{Y}^*_t$ as a shortcut for $\sup_{s\in[0, t]} \abs{Y_s}$. The next lemma is fairly standard (compare, e.g., Lacker's Lemma 4.3), nonetheless with give all details for reader's convenience.
\begin{lemma}
\label{stime}
Let $\bar p \in [p,p']$. Under Assumption \ref{ass:A}, there exists a constant $C=C(T, c_1,\chi, \bar p)$ such that for any $\mu\in \cP^p (\cD)$ and $P\in\cR(\mu)$ we have
\begin{equation}\label{bound-X}
\E^P\left[\left(\abs{X}^*_T\right)^{\bar p}\right]\le C \left(1+ \| \mu\|_T ^{\bar p} + \E^P \int_0 ^T | \Gamma_t |^{\bar p} dt \right).
\end{equation}
As a consequence, $P\in \cP^p (\Omega[A])$. Furthermore, if $\mu=P\circ X^{-1}$, we have
\[ \| \mu \|_T ^{\bar p} = \E^P\left[\left(\abs{X}^*_T\right)^{\bar p}\right] \le C \left( 1+ \E^P\left[ \int_0 ^T | \Gamma_t |^{\bar p} dt \right] \right).\]
\end{lemma}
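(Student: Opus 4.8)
The plan is to prove \eqref{bound-X} by the classical $L^{\bar p}$ moment estimates for SDEs with jumps, working from the weak formulation \eqref{eqeqX} of Lemma~\ref{defR2}, and then to deduce the last display by observing that the mean-field coupling through $\mu$ enters only through a time integral, so it can be absorbed into a Gr\"onwall argument once $\mu=P\circ X^{-1}$.

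First I would fix $\bar p\in[p,p']$ and localise, setting $\tau_N=\inf\{t\ge0:\abs{X_t}\ge N\}\wedge T$, so that every expectation below is a priori finite; since $X$ has \cadlag (hence locally bounded) paths, $\tau_N=T$ eventually, and the final bound follows by monotone convergence as $N\to\infty$. Writing \eqref{eqeqX} in integral form on $[0,t\wedge\tau_N]$, raising to the power $\bar p$ via $\abs{a+b+c+d}^{\bar p}\le 4^{\bar p-1}(\abs{a}^{\bar p}+\abs{b}^{\bar p}+\abs{c}^{\bar p}+\abs{d}^{\bar p})$, taking $\sup_{s\le t}$ and then $\E^P$, one estimates the four pieces separately. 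The initial term has expectation $\le\abs{\chi}^{\bar p}<\infty$ since $\chi\in\cP^{p'}(\R^d)$ and $\bar p\le p'$. For the drift, Jensen's inequality (the measure $\Gamma_r(d\al)dr$ on $[0,t]\times A$ has total mass $\le T$) together with the growth bound in (\ref{ass:A}.\ref{ass:A:growth:b}) and $(\int_{\R^d}\abs{z}^p\mu_r(dz))^{\bar p/p}\le\|\mu\|_r^{\bar p}$ gives a bound by
\[
C_T\int_0^t\Bigl(1+\E^P\bigl[\sup_{u\le r\wedge\tau_N}\abs{X_u}^{\bar p}\bigr]+\|\mu\|_r^{\bar p}\Bigr)dr+C_T\,\E^P\!\int_0^t\!\int_A\abs{\al}^{\bar p}\Gamma_r(d\al)dr.
\]
For the continuous martingale-measure term, Burkholder--Davis--Gundy bounds it by $C\,\E^P\bigl[(\int_0^{t\wedge\tau_N}\!\int_A\abs{\sigma\sigma^\top}\Gamma_r(d\al)dr)^{\bar p/2}\bigr]$ --- here it is essential that it is $\sigma\sigma^\top$, not $\sigma$, that has linear growth --- and Jensen together with the elementary inequalities $a^{\bar p/2}\le1+a^{\bar p}$ and $a^s\le1+a$ ($s\in(0,1]$, $a\ge0$) reduces this to a bound of the same type. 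For the compensated Poisson integral I would invoke Kunita's inequality: for $\bar p\ge2$ it produces a term of the previous kind (with $\lambda$ bounded by $c_\lambda$) plus $\E^P\!\int_0^{t\wedge\tau_N}\!\int_A\abs{\beta}^{\bar p}\Gamma_r(d\al)\lambda(r)dr$, and for $\bar p\in[1,2)$ only the latter; in either case the growth bound on $\beta$ closes the estimate, and the distinction between $\mu_{r-}$ and $\mu_r$ is irrelevant under the time integral.

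Summing the four contributions yields, for all $t\le T$,
\[
\E^P\bigl[\sup_{s\le t\wedge\tau_N}\abs{X_s}^{\bar p}\bigr]\le C_0+C\int_0^t\E^P\bigl[\sup_{u\le r\wedge\tau_N}\abs{X_u}^{\bar p}\bigr]dr+C\int_0^t\|\mu\|_r^{\bar p}\,dr,
\]
with $C_0=C(1+\E^P\!\int_0^T|\Gamma_r|^{\bar p}dr)$ and $C=C(T,c_1,\bar p)$ (recall the normalisation $c_\lambda=c_1$). Bounding $\int_0^t\|\mu\|_r^{\bar p}dr\le T\|\mu\|_T^{\bar p}$, applying Gr\"onwall's lemma and letting $N\to\infty$ gives \eqref{bound-X}; taking $\bar p=p$, and using that $\mu\in\cP^p(\cD)$ entails $\|\mu\|_T^p<\infty$ (take $x_0=0$ in the definition of $\cP^p(\cD)$, the Skorokhod distance from the null path being equal to $\abs{x}^*_T$) together with the integrability condition $\E^P\!\int_0^T|\Gamma_t|^p dt<\infty$ from Definition~\ref{defR}, one gets $P\in\cP^p(\Omega[A])$. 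Finally, when $\mu=P\circ X^{-1}$ one has $\|\mu\|_r^{\bar p}=\E^P[(\abs{X}^*_r)^{\bar p}]=\E^P[\sup_{u\le r}\abs{X_u}^{\bar p}]$, so the last term in the displayed inequality has the same form as the Gr\"onwall term; merging the two and applying Gr\"onwall once more (along the localisation, the a priori finiteness of $\E^P[(\abs{X}^*_T)^{\bar p}]$ being available from the part already proved whenever $\mu\in\cP^{\bar p}(\cD)$, in particular in all the applications where this estimate is used) removes the free $\|\mu\|_T^{\bar p}$ and gives $\|\mu\|_T^{\bar p}=\E^P[(\abs{X}^*_T)^{\bar p}]\le C(1+\E^P\!\int_0^T|\Gamma_t|^{\bar p}dt)$.

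I expect the only genuinely non-routine point --- the one that differs from the purely diffusive estimate in \cite[Lemma~4.3]{La} --- to be the bookkeeping in the jump term: picking the correct form of Kunita's/BDG's inequality for \cadlag martingales and treating the cases $\bar p\ge2$ and $\bar p<2$ uniformly. The other mildly delicate observation is that the mean-field coupling through $\mu$ is harmless here precisely because it appears integrated in time rather than through a running supremum, which is exactly what makes the last display possible.
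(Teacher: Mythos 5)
Your proof follows essentially the same route as the paper's: starting from the weak formulation of Lemma~\ref{defR2}, a four-term decomposition estimated via Jensen, Burkholder--Davis--Gundy and the growth conditions in Assumption (\ref{ass:A}.\ref{ass:A:growth:b}), with the same $\bar p\ge 2$ versus $\bar p\in[1,2)$ dichotomy for the martingale terms, followed by Gr\"onwall, and then a second Gr\"onwall using $\|\mu\|_r^{\bar p}=\E^P\left[\left(\abs{X}^*_r\right)^{\bar p}\right]$ when $\mu=P\circ X^{-1}$. Your additional care (the localisation by stopping times and the explicit use of Kunita's inequality for the compensated Poisson integral, which justifies replacing the optional quadratic variation by its compensator when $\bar p>2$) only tightens steps the paper performs more loosely, so the argument is correct and matches the paper's.
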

\begin{proof} Consider a given probability measure $\mu\in\cP^p (\cD)$ and a related admissible law $P\in \cR(\mu)$. 
By Lemma~\ref{defR2}, there exists a constant $C$ such that 
\begin{align}\label{estimate1}
\abs{X_t}^{\bar p} & \le C\abs{X_0}^{\bar p}+C\abs{\int_0^t \int_A b(s,X_s,\mu_s, \al) \Gamma_s (d\al)ds}^{\bar p}+C\abs{ \int_0^t \int_A \sigma(s,X_s, \mu_s, \al) M(ds,d\al)}^{\bar p} \\
& \quad + C\abs{ \int_0^t\int_A \beta(s,X_{s-},\mu_{s-},\al )\widetilde{ \mathcal N}(ds,d\al)}^{\bar p} \,. \nonumber
\end{align}
In what follows, the value of the constant $C$ may change from line to line, however we will indicate what it depends on.
By Jensen's inequality and using the growth conditions on $b$ in Assumption \ref{ass:A}, we have for all $t \in[0,T]$
\begin{align*}
\abs{\int_0^t \int_A b(s,X_s, \mu_s, \al)\Gamma_s (d\al) ds}^{\bar p} &\le C_t \int_0^t \int_A \sup_{0\le u \le s}\abs{b(u,X_u, \mu_u, \al)}^{\bar p} \Gamma_s (d\al) ds\\
	&\le C_{t, \bar p} \int_0^t \int_A c_1^{\bar p} \left(1+ \left( |X|^*_s\right)^{\bar p}+ \| \mu\|_s ^{\bar p} + |\al|^{\bar p} \right)  \Gamma_s (d\al) ds\,.
\end{align*}

By the Burkholder-Davis-Gundy inequality (see \cite[Theorem 48, Ch. IV.4]{Protter}), the expected supremum of the It\^o integral in \eqref{estimate1} can be bounded as follows 
\begin{align*}
\E^P\left[\left(\abs{\int_0^\cdot \!\! \int_A \sigma(s,X_s, \mu_s, \al)M(ds,d\al)}^*_t\right)^{\bar p} \right] \le  C_{\bar p} \E\left[\left(\int_0^t \!\! \int_A \sup_{0\le u \le s} \abs{\sigma (u,X_u,\mu_u,\al)}^2 \Gamma_s (d\al) ds\right)^{{\bar p}/2} \right]\\
 \le C_{t,\bar p}  \E \left[ \int_0^t \int_A c_1^{\bar p/2}\left( 1+ |X|_s ^* + \left( \int_{\cD} (|z|_s^*) ^p \mu(dz)\right)^{1/p}  +|\al| \right)^{\bar p /2} \Gamma_s (d\al) ds \right].
\end{align*}
Lastly, we have to consider the integral $I_t=\int_{[0,t]\times A}\beta(s,X_{s-},\mu_{s-},\al)\widetilde {\mathcal N}(ds,d\al)$ in \eqref{estimate1}. Using the Burkholder-Davis-Gundy inequality again, see \cite[Theorem 48, Ch. IV.4]{Protter}, it follows that 
\begin{align*}
\E^P & \left[\left( \abs{\int_0^\cdot \int_A \beta(s,X_{s-},\mu_{s-},\al)\widetilde{\mathcal N}(ds,d\al)}^*_t\right)^{\bar p} \right]\\
& \le C_{\bar p} \E \left[\left(\int_0^t\int_A\sup_{0\le u \le s} \abs{\beta(u,\mu_{u-},X_{u-},\al)}^{2}\lambda(s)\Gamma_s (d\al)\,ds\right)^{\bar p/2}\right],
\end{align*}
and since $\beta(t,x,\mu,\al)$ has linear growth in $(x,\mu,\al)$ uniformly in $t$, it is found that $$\E^P\left[\left(\abs{I}^*_t\right)^{\bar p} \right]\le C \E \left[\int_0^t\int_A  c_1^{\bar p/2 + 1}\left( 1+ |X|_s ^* + \left( \int_{\cD} (|z|^*_s) ^p \mu(dz)\right)^{1/p} +|\al | \right)^{\bar p/2}\Gamma_s (d\al)\,ds\right].$$
Observe that the previous two bounds hold if $\bar{p}\ge2$. On the other hand, if $\bar p\in[1,2)$ the conclusion still holds since $\abs{y}^{\bar p/2}\le 1 + |y|^{\bar p}$ and then arguing as before.

Combining all the previous estimates, we get that there exists a positive constant $ C=C(t,c_1,\chi, \bar p)$ such that 
\[
\E^P\left[\left(\abs{X}^*_t\right)^{\bar p} \right]\le C \E^P\left[ 1+ |X_0 |^{\bar p} + \int_0 ^t \left(1+ \| \mu\|_s ^{\bar p} + |\Gamma_s |^{\bar p} \right)ds \right], \quad t \in [0,T]. 
\]
Hence the estimates (\ref{bound-X}) follows from an application of Gronwall's lemma. As a consequence, when $\mu = P\circ X^{-1}$, we have
\[ \| \mu \|_t ^{\bar p} = \E^P [(| X|_t ^*)^{\bar p}] \le C \E^P \left[ |X_0| ^{\bar p} + \int_0 ^t (1+2 \| \mu \| _s ^{\bar p} + | \Gamma_s| ^{\bar p}) ds\right],\]
so that another application of Gronwall's lemma gives the second estimate.  
\end{proof}

\subsection{Some continuity results}
The following result is an extension of Corollary A.5 in \cite{La}, where the space of continuous functions is replaced with the Skorokhod space $\cD ([0,T] ; E)$ of all right-continuous with left limit functions taking values in some metric space $(E,\rho)$.  

\begin{lemma}\label{Lemma:Cont}
Let $(E,\rho)$ be a complete separable metric space. Let $\phi: [0,T]\times E \times A \to \mathbb R$ be a jointly measurable function in all its variables and jointly continuous in $(x,\al ) \in E \times A$ for each $t \in [0,T]$. Assume that for some constant $c>0$ and some $x_0 \in E$ one of the following two properties is satisfied\begin{enumerate}
\item \label{usc} $\phi(t,x,\al) \le c(1+\rho^p (x,x_0) + |\al |^p)$, for all $(t,x,\al) \in [0,T] \times E \times A$;
\item \label{cont} $| \phi(t,x,\al) | \le c(1+\rho^p (x,x_0) + |\al |^p)$, for all $(t,x,\al) \in [0,T] \times E \times A$.
\end{enumerate}
Hence, if \eqref{usc} (resp. \eqref{cont}) is fulfilled, the following function
\begin{equation}\label{phi-dq} \cD ([0,T] ; E) \times \V[A] \ni (x,q) \mapsto \int_{[0,T] \times A} \phi(t,x(t),\al) q(dt,d\al)\end{equation}
is upper semicontinuous (resp. continuous). 
\end{lemma}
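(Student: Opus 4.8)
The plan is to follow the structure of the proof of \cite[Corollary A.5]{La}, reducing the statement to the case of a bounded, jointly continuous integrand, while exploiting two structural features of $\mathcal{V}[A]$ that compensate for the weakness of the Skorokhod topology: every $\Gamma\in\mathcal{V}[A]$ has first marginal equal to Lebesgue measure on $[0,T]$, so that a Lebesgue-negligible set of times is negligible for all of the relevant measures at once; and convergence in $d_{\mathcal V}$ forces convergence of the $p$-th moments of $|\alpha|$, hence uniform $p$-integrability of $|\alpha|^p$ along the sequence. I would argue sequentially: fix $x_n\to x$ in $\mathcal D([0,T];E)$ and $q_n\to q$ in $\mathcal V[A]$, and first extract from $J_1$-convergence (see \cite{Bill}) time-changes $\lambda_n\to\mathrm{id}$ uniformly with $\sup_t\rho\bigl(x_n(\lambda_n(t)),x(t)\bigr)\to0$. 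Two consequences will be used repeatedly: $M:=\sup_n\sup_t\bigl(\rho(x_n(t),x_0)\vee\rho(x(t),x_0)\bigr)<\infty$; and $\rho\bigl(x_n(t_n),x(t)\bigr)\to0$ whenever $t_n\to t$ with $x$ continuous at $t$ (write $t_n=\lambda_n(s_n)$, so $s_n\to t$, and combine $\rho(x_n(\lambda_n(s_n)),x(s_n))\to0$ with $\rho(x(s_n),x(t))\to0$). Since $\mathrm{Cont}(x)^c$ is countable, it is null for $q$ and for every $q_n$.

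Next I would peel off the easy reductions. Case (\ref{usc}) reduces to case (\ref{cont}): with $\phi^{(k)}:=\phi\vee(-k)$, which is still continuous in $(x,\alpha)$, satisfies a two-sided bound $|\phi^{(k)}|\le(k+c)(1+\rho^p(\cdot,x_0)+|\alpha|^p)$, and decreases to $\phi$, the conclusion of (\ref{cont}) gives $\lim_n\int\phi^{(k)}(t,x_n(t),\alpha)q_n=\int\phi^{(k)}(t,x(t),\alpha)q$; since $\int\phi\,dq_n\le\int\phi^{(k)}\,dq_n$, letting first $n\to\infty$ and then $k\to\infty$ (monotone convergence, the $\phi^{(k)}(t,x(t),\cdot)$ being dominated above by the $q$-integrable map $c(1+\rho^p(x(t),x_0)+|\alpha|^p)$) yields upper semicontinuity. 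For case (\ref{cont}), the two-sided bound, $\sup_t\rho(x_n(t),x_0)\le M$, Markov's inequality and the uniform $p$-integrability let me choose, for any $\varepsilon>0$, a radius $R$ so that $\{|\alpha|>R\}$ contributes at most $\varepsilon$ to $\int\phi(t,x_n(t),\alpha)q_n$ and to $\int\phi(t,x(t),\alpha)q$, uniformly in $n$; multiplying $\phi$ by a continuous cutoff $\chi_R$ in $\alpha$ and truncating the product to $[-C_R,C_R]$ with $C_R:=c(1+M^p+(R+1)^p)$ leaves the integrals unchanged because the trajectories stay in $\bar B(x_0,M)$. So it suffices to prove: for $\phi$ bounded, jointly measurable and continuous in $(x,\alpha)$, the map is continuous.

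For that last step, given $\varepsilon>0$ I would invoke a Scorza--Dragoni type theorem to produce a compact $K\subseteq[0,T]$ with $\mathrm{Leb}([0,T]\setminus K)<\varepsilon$ on which $\phi$ is \emph{jointly} continuous in $(t,x,\alpha)$, and then a Tietze extension $\widetilde\phi\in C_b([0,T]\times E\times A)$ with $\|\widetilde\phi\|_\infty\le\|\phi\|_\infty$ agreeing with $\phi$ on $K\times E\times A$. The fixed Lebesgue first marginal bounds each of $\bigl|\int(\phi-\widetilde\phi)(t,x_n(t),\alpha)q_n\bigr|$ and $\bigl|\int(\phi-\widetilde\phi)(t,x(t),\alpha)q\bigr|$ by $2\|\phi\|_\infty\varepsilon$; for the remaining term I set $G_n(t,\alpha):=\widetilde\phi(t,x_n(t),\alpha)$, $G(t,\alpha):=\widetilde\phi(t,x(t),\alpha)$, note that $G$ is continuous at every $(t,\alpha)$ with $t\in\mathrm{Cont}(x)$, hence $q$-a.e., and that $G_n(t_n,\alpha_n)\to G(t,\alpha)$ whenever $(t_n,\alpha_n)\to(t,\alpha)$ with $t\in\mathrm{Cont}(x)$ — here the joint continuity of $\widetilde\phi$ combines with $x_n(t_n)\to x(t)$ — so that Skorokhod's representation theorem for $q_n/T\to q/T$ on the Polish space $[0,T]\times A$, together with bounded convergence, gives $\int G_n\,dq_n\to\int G\,dq$. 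Collecting the three estimates and letting $\varepsilon\downarrow0$ finishes case (\ref{cont}), hence the lemma.

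The genuinely new difficulty, absent from the continuous-path setting of \cite{La}, is the tension between $J_1$-convergence and the mere $t$-measurability of $\phi$: one cannot simply plug trajectories into $\phi$ and pass to the limit (there is no uniform, nor even pointwise, convergence at the jump times of the limit), nor can one test the weak convergence $q_n\to q$ against $(t,\alpha)\mapsto\phi(t,x(t),\alpha)$, which is only a Carathéodory integrand. The two structural facts above dissolve this — the time-change characterisation of $J_1$ yielding $x_n(t_n)\to x(t)$ at continuity points, and the fixed first marginal making the discarding of a small set of bad times legitimate — while the Scorza--Dragoni/Tietze regularisation supplies the joint continuity needed for the limit argument. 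I expect the derivation of $x_n(t_n)\to x(t)$ for $t\in\mathrm{Cont}(x)$ and the a.s.-convergence step under the Skorokhod coupling to require the most care; the rest is bookkeeping.
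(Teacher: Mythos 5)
Your proposal is correct, but it follows a genuinely different route from the paper's. The paper first proves that the map $(x,q)\mapsto \frac{1}{T}q(dt,d\al)\,\delta_{x(t)}(de)$ is continuous from $\cD([0,T];E)\times\V[A]$ into $\cP^p([0,T]\times E\times A)$, testing against jointly continuous functions of $p$-growth via \cite[Prop.~A.1]{La} and using exactly the two structural facts you isolate ($x^n(t)\to x(t)$ at continuity points of $x$, hence a.e.\ for the fixed Lebesgue time-marginal, plus a time-change domination bound); it then outsources the passage from jointly continuous test functions to integrands that are only measurable in $t$ and continuous in $(x,\al)$, under the one-sided (resp.\ two-sided) growth bound, to \cite[Cor.~A.4]{La} (resp.\ \cite[Lemma~A.3]{La}). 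You instead re-prove that passage from scratch with a different decomposition: monotone truncation from below to reduce the u.s.c.\ case to the continuous one, a tail truncation in $\al$ justified by the uniform $p$-integrability coming from $d_{W,p}$-convergence of $q_n/T$, Scorza--Dragoni plus Tietze to trade $t$-measurability for joint continuity off a time set that is small simultaneously for all the measures (again because the first marginal is fixed), and finally Skorokhod representation with bounded convergence, using the slightly stronger $J_1$ fact $x_n(t_n)\to x(t)$ at continuity points of $x$. The paper's route buys brevity and a reusable Wasserstein-continuity statement for the auxiliary measure $\eta$; yours buys self-containedness, a clean separation of the three difficulties (one-sided bound, unboundedness in $\al$, lack of $t$-continuity), and, incidentally, a direct treatment of the joint limit in $(x,q)$ rather than the paper's argument variable by variable. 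The only points needing care in a write-up are the ones you already flag: the uniform bound $\sup_n\sup_t\rho(x_n(t),x_0)<\infty$ (which does follow from $J_1$-convergence) and, in case (\ref{usc}), allowing the limiting functional to take the value $-\infty$, which is consistent with upper semicontinuity as used in the paper.
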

\begin{proof}
We prove first that the function
\begin{equation}\label{eta} \cD ([0,T] ; E) \times \V[A] \ni (x,q) \mapsto \eta (dt,d\al,de) := \frac{1}{T} q(dt,d\al) \delta_{x(t)} (de) \in \cP^p ([0,T] \times E \times A)\end{equation}
is jointly continuous. In order to do that we adapt and expand the arguments provided in the proof of \cite[Lemma 1.5]{kurtz1992averaging}. Using \cite[Prop. A.1]{La} it suffices to show that when $(x^n ,q^n) \to (x,q)$ in $\cD ([0,T] ; E) \times \V[A]$ as $n\to \infty$, we have $\int \phi d\eta^n \to \int \phi d\eta$ for all continuous functions $\phi: [0,T] \times E \times A \to \mathbb R$ such that $| \phi(t,x,\al) | \le c(1+\rho^p (x,x_0) + |\al|^p)$, for all $(t,x,\al) \in [0,T] \times E \times A$. We use the notation $\eta^n$ for the measure associated to $(x^n, q^n)$ as in \eqref{eta}. 

By definition of Skorokhod topology, there exists a sequence of time changes $\tau_n(t)$, i.e. strictly increasing continuous functions mapping $[0,T]$ onto $[0,T]$, 
such that
\begin{equation} \sup_{t \in [0,T]} |\tau_n (t) - t| \to 0 \quad \textrm{and} \quad \lim_{n \to \infty} \sup_{t \in [0,T]} \rho (x^n (\tau_n (t)), x(t)) = 0. \label{unif-conv}\end{equation}
Let $\bar q^n ([0,t] \times H) := q^n ([0,\tau_n (t)]\times H)$ for all $t\in [0,T]$ and any Borel set $H \subset A$. Since $q^n$ is tight, for all $\epsilon >0$ there exists a compact set $K \subset A$ such that $\sup_n q^n ([0,T]\times K) < \epsilon$. Clearly, the same property holds for $\bar q^n$ as well as $q$. Moreover, being $\phi$ continuous, we have
\begin{equation} \sup_{(t,\alpha) \in [0,T] \times K} |\phi(\tau_n (t), x^n (\tau_n(t)), \alpha) - \phi(t,x(t),\alpha)| \to 0, \quad n\to \infty. \label{conv-phi} \end{equation}
Therefore we can show
\begin{align*} 
\int_{[0,T]\times A} \phi(t,x^n (t), \al ) q^n (dt,d\al )  & =  \int_{[0,T]\times A} \phi(t,x^n (\tau_n(t)), \al ) \bar q^n (dt,d\al )\\
&  \to \int \phi(t,x (t), \al ) q(dt,d\al ), \quad n \to \infty .\end{align*}
Indeed, letting $K^c$ denote the complement of $K$, we have
\begin{align*}
& \abs{\int_{[0,T]\times A} \phi(t,x^n (\tau_n(t)), \al ) \bar q^n (dt,d\al ) - \int_{[0,T]\times A} \phi(t,x (t), \al ) q(dt,d\al ) } \\
&\le \left | \int_{[0,T]\times K} \phi(t,x^n (\tau_n(t)), \al ) \bar q^n (dt,d\al ) - \int_{[0,T]\times K} \phi(t,x (t), \al ) q(dt,d\alpha) \right | \\
&\quad  + \left | \int_{[0,T]\times K^c} \phi(t,x^n (\tau_n(t)), \al ) \bar q^n (dt,d\al ) - \int_{[0,T]\times K^c} \phi(t,x (t), \al ) q(dt,d\alpha) \right | 
\end{align*}
where the second summand above can be made arbitrarily small, uniformly in $n$, since both integrands are in their respective $L^p$ spaces due to the polynomial growth of the function $\phi$. Furthermore, the first summand can be handled as follows:
\begin{align*}
& \left | \int_{[0,T]\times K} \phi(t,x^n (\tau_n(t)), \al ) \bar q^n (dt,d\al ) - \int_{[0,T]\times K} \phi(t,x (t), \al ) q(dt,d\alpha) \right | \\
& \le \int_{[0,T]\times K} \left | \phi(t,x^n (\tau_n(t)), \al ) - \phi(t,x(t),\al)\right | \bar q^n (dt,d\al )\\
& \quad + \left | \int_{[0,T]\times K} \phi(t,x (t), \al ) \bar q^n (dt,d\alpha) - \int_{[0,T]\times K} \phi(t,x (t), \al ) q (dt,d\alpha)  \right |
\end{align*}
where the first summand goes to zero thanks to \eqref{conv-phi}, while the second one does too due to $\bar q^n \to q$ in the (modified) $p$-Wasserstein distance defined in \eqref{2-modWass}.

Finally, the upper semicontinuity (resp. continuity) of the function \eqref{phi-dq} is obtained by applying \cite[Corollary A.4]{La} (resp. \cite[Lemma A.3]{La}). 
\end{proof}

\begin{lemma}\label{Lemma:cont-mu}
Let $\{\mu^n\}\subseteq\cP^p(\cD)$ a convergent sequence to $\mu\in\cP^p(\cD)$. 
Then, for any $q\ge1$
\begin{align*}
\int_0^T d_{W,p}(\mu^n_t,\mu_t)^q\,dt\to0, \quad n \to\infty.
\end{align*}
\end{lemma}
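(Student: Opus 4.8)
The plan is to reduce the statement to a pointwise-in-time convergence, $d_{W,p}(\mu^n_t,\mu_t)\to 0$ for Lebesgue-a.e.\ $t\in[0,T]$, and then to conclude by dominated convergence on the finite interval $[0,T]$. Two preliminary observations would be recorded first: (i) convergence in $\cP^p(\cD)$ is stronger than weak convergence on $(\cD,J_1)$; and (ii) applying the triangle inequality for $d_{W,p}$ against the constant path $0$ gives $\bigl|\,\|\mu^n\|_T-\|\mu\|_T\,\bigr|\le d_{W,p}(\mu^n,\mu)\to 0$, so that $\|\mu^n\|_T^p=\int_\cD(|x|_T^*)^p\,\mu^n(dx)\to\|\mu\|_T^p$ and in particular $M:=\sup_n\|\mu^n\|_T^p<\infty$.

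For the pointwise convergence I would first recall the standard fact that the set $D_\mu\subseteq[0,T]$ of times at which the evaluation map $\pi_t$ fails to be $\mu$-a.s.\ continuous is at most countable (see, e.g., \cite{Bill,EK}); equivalently $\mu(\{x\in\cD:\,x_{t-}\neq x_t\})=0$ for every $t\notin D_\mu$. Fix such a $t$. Since $\pi_t$ is $J_1$-continuous at every path that is continuous at $t$, the continuous mapping theorem yields $\mu^n_t\Rightarrow\mu_t$ weakly on $\R^d$; the real point is to upgrade this to convergence in $d_{W,p}$. To this end I would apply Skorokhod's representation theorem on the Polish space $(\cD,J_1)$ to obtain $\cD$-valued random elements $\xi^n\sim\mu^n$, $\xi\sim\mu$ on a common probability space with $\xi^n\to\xi$ a.s.\ in $J_1$. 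Because $x\mapsto|x|_T^*$ is $J_1$-continuous, $|\xi^n|_T^*\to|\xi|_T^*$ a.s., and combined with $\E[(|\xi^n|_T^*)^p]=\|\mu^n\|_T^p\to\|\mu\|_T^p=\E[(|\xi|_T^*)^p]$ this forces $|\xi^n|_T^*\to|\xi|_T^*$ in $L^p$ (a.s.\ convergence together with convergence of $L^p$-norms), so $\{(|\xi^n|_T^*)^p\}_n$ is uniformly integrable. For $t\notin D_\mu$ we also have $\xi^n(t)\to\xi(t)$ a.s., since $\xi$ is a.s.\ continuous at $t$, and $|\xi^n(t)-\xi(t)|^p\le 2^{p-1}\bigl((|\xi^n|_T^*)^p+(|\xi|_T^*)^p\bigr)$, so $\{|\xi^n(t)-\xi(t)|^p\}_n$ is uniformly integrable; Vitali's theorem then gives $\E[|\xi^n(t)-\xi(t)|^p]\to 0$. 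As $(\xi^n(t),\xi(t))$ is a coupling of $\mu^n_t$ and $\mu_t$, this yields $d_{W,p}(\mu^n_t,\mu_t)^p\le\E[|\xi^n(t)-\xi(t)|^p]\to 0$, hence $d_{W,p}(\mu^n_t,\mu_t)\to 0$ for every $t\in[0,T]\setminus D_\mu$, i.e.\ for a.e.\ $t$.

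It then remains to dominate. For every $t$ and $n$, the triangle inequality against the constant path $0$ gives $d_{W,p}(\mu^n_t,\mu_t)\le|\mu^n_t|+|\mu_t|$, and since $|\mu^n_t|^p=\int_\cD|x_t|^p\,\mu^n(dx)\le\|\mu^n\|_T^p\le M$ and likewise $|\mu_t|^p\le\|\mu\|_T^p$, one gets $d_{W,p}(\mu^n_t,\mu_t)^q\le\bigl(M^{1/p}+\|\mu\|_T\bigr)^q$, a constant independent of $t$ and $n$, hence integrable on $[0,T]$. Dominated convergence together with the previous paragraph then gives $\int_0^T d_{W,p}(\mu^n_t,\mu_t)^q\,dt\to 0$.

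I expect the only genuinely non-routine point to be the upgrade from weak convergence of the marginals $\mu^n_t\Rightarrow\mu_t$ to $p$-Wasserstein convergence: it hinges on uniform integrability of the $p$-th moments of $\mu^n_t$, which does not follow marginal-by-marginal from $\mu^n\to\mu$ in $\cP^p(\cD)$ but must be extracted globally from $\|\mu^n\|_T^p\to\|\mu\|_T^p$ via the Skorokhod coupling and the $J_1$-continuity of the running supremum, and then transferred to the fixed time $t$ using that $t$ is a continuity point of $\mu$. (Alternatively, the same uniform integrability could be obtained directly, without Skorokhod's representation, by noting that weak convergence plus convergence of the integrals of the $J_1$-continuous functional $x\mapsto(|x|_T^*)^p$ forces that functional to be uniformly integrable under $\{\mu^n\}$, and then dominating $|x_t|^p\le(|x|_T^*)^p$.)
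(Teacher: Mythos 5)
Your proof is correct and follows essentially the same route as the paper: Skorokhod representation on $(\cD,J_1)$, convergence of the marginals at a.e.\ $t$ (outside the discontinuity times of the limit), an integrability upgrade via the $p$-th moment of the running supremum, and a final dominated-convergence step in $t$. The only cosmetic difference is that you invoke Vitali/uniform integrability and the fixed countable set $D_\mu$ where the paper uses a generalized dominated convergence theorem and argues a.e.\ in $t$ directly, which if anything makes your write-up slightly more careful.
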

\begin{proof}
First, recall that convergence with respect to $d_{W,p}$ implies also weak convergence, hence
Skorokhod's representation theorem ensures that there exist $\cD$-valued random variables $X^n$ and $X$, defined on a common probability space $(\Omega,\mathcal{F}, P)$ such that
\[
\mu^n= P\circ (X^n)^{-1}\text{ and }\mu=P\circ X^{-1}\,,
\]
with
\[
d_{J_1}(X^n,X)\to 0, \quad P\text{ a.s.},
\]
where $d_{J_1}$ denotes any metric consistent with Skorokhod $J_1$-topology (see, e.g., \cite[Chp. 3, Sect. 5]{EK}).
By the triangular inequality, we obtain
\[
\abs{X^n_t-X_t}^p \le 2^p (d_{J_1}(X^n,0)^p + d_{J_1}(X,0)^p)
\]
and 
\[
 d_{J_1}(X^n,0)^p + d_{J_1}(X,0)^p \to  2 d_{J_1}(X,0)^p\,, \quad n \to \infty,
\]
where $d_{J_1}(X,0)^p=\left(\abs{X}^*_T\right)^p \in L^1(P)$, which does not depend on $t$.
Then, since the convergence $X^n\to X$ in $J_1$ implies convergence for a.e $t\in[0,T]$, by applying a slightly more general version of dominated convergence (e.g. \cite[Theorem 1.21]{kallenberg}), we have 
\[
\E\left[\abs{X^n_t-X_t}^p\right]\to 0\quad\text{a.e. } t\in[0,T]\,.
\]
Then, by applying dominated convergence once again in view of
\[
\E^P \left[	d_{J_1}(X^n,0)^p + d_{J_1}(X,0)^p \right] \to  2 \int_\cD d_{J_1}(x,0)^p \mu(dx)<\infty\,,
\]
we can conclude that
\[
\int_0^T\left(\E\left[\abs{X^n_t-X_t}^p\right]\right)^q\,dt\to 0\,, \quad n \to \infty. \qedhere
\]
\end{proof}

The two previous lemmas together with the assumptions of Lipschitz continuity in $\mu$ of $f$ and the growth conditions of both $f$ and $g$ (cf. Assumption \ref{ass:A}), yields the following continuity properties of the objective functional $J$ in \eqref{opJ}.

\begin{lemma}\label{Jcont}
The operator $J$ defined in \eqref{opJ} is upper semi-continuous under Assumption \ref{ass:A}. Moreover, if Assumption \ref{ass:B} is also in force, the operator $J$ is continuous.
\end{lemma}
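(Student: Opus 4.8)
The plan is to write $J(\mu,P)=\E^P\big[\int_{[0,T]\times A}f(t,X_t,\mu_t,\al)\,\Gamma(dt,d\al)\big]+\E^P[g(X_T,\mu_T)]$, to fix a sequence $(\mu^n,P_n)\to(\mu,P)$ in $\cP^p(\cD)\times\cP^p(\Omega[A])$, and to disentangle the dependence on the measure argument from the dependence on $P$, treating the running and terminal costs separately. The main tool is Skorokhod's representation theorem on $\Omega[A]=\V[A]\times\cD$, used as in the proof of Lemma~\ref{Lemma:cont-mu} to realise $(\Gamma_n,X_n)\to(\Gamma,X)$ almost surely on a common probability space; convergence in the $p$-Wasserstein distance additionally gives $\E^{P_n}\big[\int_{[0,T]\times A}|\al|^p\Gamma(dt,d\al)\big]\to\E^P\big[\int_{[0,T]\times A}|\al|^p\Gamma(dt,d\al)\big]$ and $\E^{P_n}[(|X|^*_T)^p]\to\E^P[(|X|^*_T)^p]$, which will control the dominating functions in the (generalised) dominated convergence arguments.

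\emph{Step 1: removing the dependence on the running measure.} For the running cost I would use the Lipschitz bound $|f(t,x,\mu^n_t,\al)-f(t,x,\mu_t,\al)|\le c_2\,d_{W,p}(\mu^n_t,\mu_t)$ from Assumption~(\ref{ass:A}.\ref{ass:a:growth:f:g}); since each $\Gamma_t$ is a probability measure on $A$ this produces the \emph{deterministic} estimate $\big|\E^{P_n}\big[\int_{[0,T]\times A}(f(t,X_t,\mu^n_t,\al)-f(t,X_t,\mu_t,\al))\Gamma(dt,d\al)\big]\big|\le c_2\int_0^T d_{W,p}(\mu^n_t,\mu_t)\,dt\to0$ by Lemma~\ref{Lemma:cont-mu} (with $q=1$). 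For the terminal cost, the evaluation map $x\mapsto x(T)$ is $J_1$-continuous (time changes fix the endpoint), so $\nu_n:=P_n\circ X_T^{-1}\to\nu:=P\circ X_T^{-1}$ in $\cP^p(\R^d)$; since $g$ is jointly continuous, $g(y_n,\mu^n_T)-g(y_n,\mu_T)\to0$ whenever $y_n\to y$, while $|g(y,\mu^n_T)-g(y,\mu_T)|\le C(1+|y|^p)$ with $C$ independent of $n$ (because $|\mu^n_T|^p\to|\mu_T|^p$), so Skorokhod representation for $\nu_n$ together with a slightly more general version of dominated convergence (\cite[Theorem~1.21]{kallenberg}) yields $\E^{P_n}[g(X_T,\mu^n_T)-g(X_T,\mu_T)]\to0$. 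Hence it suffices to analyse $P\mapsto J(\mu,P)$ with $\mu$ held fixed.

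\emph{Step 2: the fixed-$\mu$ functional in $P$.} The terminal contribution $P\mapsto\E^P[g(X_T,\mu_T)]$ is continuous, since $x\mapsto g(x(T),\mu_T)$ is continuous on $\cD$ with $|g(x(T),\mu_T)|\le c_2(1+(|x|^*_T)^p+|\mu_T|^p)$ and convergence of $P_n\circ X^{-1}$ to $P\circ X^{-1}$ in $\cP^p(\cD)$ passes to integrals of continuous functionals of $p$-growth (Skorokhod plus generalised dominated convergence again). For the running contribution put $\phi(t,x,\al):=f(t,x,\mu_t,\al)$, jointly measurable and jointly continuous in $(x,\al)$. Under Assumption~\ref{ass:B} the set $A$ is compact, so $|\al|^{p'}$ is bounded there and $|\mu_t|^p\le\|\mu\|^p_T$, whence $|\phi(t,x,\al)|\le c(1+|x|^p+|\al|^p)$; Lemma~\ref{Lemma:Cont} (second alternative, $E=\R^d$) then makes $(q,x)\mapsto\int_{[0,T]\times A}\phi(t,x(t),\al)\,q(dt,d\al)$ continuous on $\V[A]\times\cD$, so $P\mapsto\E^P[\int f\,d\Gamma]$ is continuous and, combined with Step~1, $J$ is \emph{continuous} under Assumptions~\ref{ass:A}--\ref{ass:B}. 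Under Assumption~\ref{ass:A} alone I would instead exploit the one-sided bound $f(t,x,\mu_t,\al)\ge -c_2(1+|x|^p+|\mu_t|^p)+c_3|\al|^{p'}$: the function $\widehat f(t,x,\al):=\phi(t,x,\al)+c_2(1+|x|^p+|\mu_t|^p)\ge c_3|\al|^{p'}\ge0$ has $-\widehat f$ jointly measurable, continuous in $(x,\al)$ and $\le 0$, so Lemma~\ref{Lemma:Cont} applied to $-\widehat f$ shows $(q,x)\mapsto\int_{[0,T]\times A}\widehat f(t,x(t),\al)\,q(dt,d\al)$ is lower semicontinuous on $\V[A]\times\cD$; the a.s.\ convergence $(\Gamma_n,X_n)\to(\Gamma,X)$ and Fatou's lemma then give $\liminf_n\E^{P_n}\big[\int\widehat f\,d\Gamma\big]\ge\E^P\big[\int\widehat f\,d\Gamma\big]$, and subtracting the continuous correction $c_2\big(T+\E^P\!\int_0^T|X_t|^p\,dt+\int_0^T|\mu_t|^p\,dt\big)$ (here $\int_0^T|\mu_t|^p\,dt\le T\|\mu\|^p_T<\infty$ is a constant and $P\mapsto\E^P\!\int_0^T|X_t|^p\,dt$ is $\cP^p$-continuous) produces the one-sided semicontinuity of $P\mapsto\E^P[\int f\,d\Gamma]$, hence of $J$.

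\emph{Main obstacle.} The delicate point is the running cost under Assumption~\ref{ass:A}: because $f$ may grow like $|\al|^{p'}$ with $p'>p$ while the metric on $\V[A]$ controls only $p$-th moments of the control, the functional $\Gamma\mapsto\int_{[0,T]\times A}|\al|^{p'}\Gamma(dt,d\al)$ is only lower semicontinuous, not continuous; Lemma~\ref{Lemma:Cont} therefore cannot be applied to $\phi$ directly, and one recovers only the Fatou-type semicontinuity obtained above — the form that is consistent with $J$ taking values in $\R\cup\{\infty\}$ and that is invoked (with a $\liminf$) in Step~3 of the proof of Theorem~\ref{thm-unbdd}. Compactness of $A$ (Assumption~\ref{ass:B}) removes this obstruction and upgrades the conclusion to joint continuity. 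The remaining work is routine bookkeeping of the dominating functions, using the a priori estimates of Lemma~\ref{stime}, so that the generalised dominated convergence theorem applies uniformly in $n$.
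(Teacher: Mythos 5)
Your proof is correct and follows essentially the same route as the paper's: Lemma~\ref{Lemma:Cont} handles the dependence on $(X,\Gamma)$, the Lipschitz bound in the measure from Assumption~(\ref{ass:A}.\ref{ass:a:growth:f:g}) together with Lemma~\ref{Lemma:cont-mu} handles the dependence on $\mu$, and compactness of $A$ upgrades the one-sided statement to full continuity; you merely make explicit the passage from pathwise (semi)continuity of $\cC^\mu$ to the level of expectations over $P$ (Skorokhod representation, Fatou, generalised dominated convergence), which the paper leaves implicit.

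One remark on the direction of semicontinuity: under Assumption~\ref{ass:A} alone you establish \emph{lower} semicontinuity of $J$, whereas the lemma as stated says ``upper''. Your direction is in fact the only one available, since the coercivity bound $f\ge -c_2(1+|x|^p+|\mu|^p)+c_3|\al|^{p'}$ with $p'>p$ makes the running cost only l.s.c.\ when $A$ is noncompact (upper semicontinuity genuinely fails, e.g.\ already for $f=|\al|^{p'}$, because the $\V$-topology controls only $p$-th moments of the control), and it is exactly the form invoked, with a $\liminf$, in Step~3 of the proof of Theorem~\ref{thm-unbdd}. So the mismatch lies in the wording of the statement rather than in your argument.
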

\begin{proof} The upper semi-continuity is an easy consequence of Lemmas \ref{Lemma:Cont} and \ref{Lemma:cont-mu}, while the continuity follows from the compactness of $A$. More precisely, Lemma \ref{Lemma:Cont} is used to prove the semi-continuity of $\mathcal C^\mu (X,\Gamma)$ in $(X,\Gamma)$, while the one in $\mu$ is granted by Lemma \ref{Lemma:cont-mu}. \end{proof}


We conclude this part of the appendix with establishing a continuity property for the set-valued map $\cR$, giving the set of all admissible laws corresponding to measures $\mu \in \cP^p (\cD)$, as given in Definition \ref{defR}.  

\begin{lemma}\label{Rcont}
Under Assumptions \ref{ass:A} and \ref{ass:B}, the set-valued correspondence $\cR$ given in Definition \ref{defR} is continuous with relatively compact image $\cR(\cP^p (\cD))=\bigcup_{\mu\in\cP^p (\cD)}\cR(\mu)$ in $\cP^p(\Omega[A])$.
\end{lemma}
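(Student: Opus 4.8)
The plan is to establish the three parts of the statement separately: relative compactness of the image, upper hemicontinuity together with nonemptiness and compactness of the values, and lower hemicontinuity (the last two giving the ``continuity'' of the correspondence needed for the Berge Maximum Theorem invoked in the proof of Theorem~\ref{thm:relaxedMFGs}). For the compactness part I would simply recall that, by Step~4 of the proof of Theorem~\ref{thm:relaxedMFGs}, under Assumptions~\ref{ass:A} and~\ref{ass:B} one has $\cR(\mu)\subset\cQ$ for every $\mu\in\cP^p(\cD)$, where $\cQ$ is the set built in Step~2 there; hence $\cR(\cP^p(\cD))\subset\overline\cQ$, and $\overline\cQ$ is compact in $\cP^p(\Omega[A])$ (Step~2, equivalently Proposition~\ref{Qccompact}), which yields relative compactness of the image. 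Each $\cR(\mu)$ is nonempty by Remark~\ref{propR}, and it is closed by the closed-graph property established next (applied to the constant sequence $\mu^n\equiv\mu$), hence compact as a closed subset of $\overline\cQ$.

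\textbf{Upper hemicontinuity.} Since the image is relatively compact, it suffices to show $\cR$ has closed graph (cf.\ \cite[Theorem 17.11]{AlBo}). So let $\mu^n\to\mu$ in $\cP^p(\cD)$, $P_n\in\cR(\mu^n)$, $P_n\to P$ in $\cP^p(\Omega[A])$, and check the three conditions of Definition~\ref{defR} for $P$. The initial-law condition passes to the limit because $X\mapsto X_0$ is $J_1$-continuous, and the integrability condition $\E^P[\int_0^T|\Gamma_t|^p\,dt]<\infty$ is immediate under Assumption~\ref{ass:B}. The martingale property is the heart of the matter, and I would prove it in two moves. First, using the Lipschitz dependence of $b,\sigma\sigma^\top,\beta$ on the measure argument (Assumption~(\ref{ass:A}.\ref{ass:A:growth:b})), the boundedness of $\phi,D\phi,D^2\phi$, and $\mu_{s-}=\mu_s$ a.e., one gets the pathwise estimate
\[
\sup_{(\Gamma,X)\in\Omega[A]}\bigl|\cM^{\mu^n,\phi}_t(\Gamma,X)-\cM^{\mu,\phi}_t(\Gamma,X)\bigr|\;\le\;C_\phi\int_0^T d_{W,p}(\mu^n_s,\mu_s)\,ds,
\]
which tends to $0$ as $n\to\infty$ by Lemma~\ref{Lemma:cont-mu} with $q=1$. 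Second, for fixed $\mu$ and $t$ outside the at most countable set $D=\{t:P(\Delta X_t\neq0)>0\}$, the functional $(\Gamma,X)\mapsto\cM^{\mu,\phi}_t(\Gamma,X)$ is bounded (Assumption~\ref{ass:B}) and $P$-a.s.\ continuous on $\Omega[A]$: its integral term is continuous by Lemma~\ref{Lemma:Cont} applied to $\psi(s,x,\al)=L\phi(s,x,\mu_s,\al)\mathbf 1_{\{s\le t\}}$ (here $L\phi$ is bounded and jointly continuous in $(x,\al)$, and $X_{s-}=X_s$ for a.e.\ $s$), while $X\mapsto\phi(X_t)$ is $P$-a.s.\ continuous for $t\notin D$. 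Passing to the limit in $\E^{P_n}[(\cM^{\mu^n,\phi}_t-\cM^{\mu^n,\phi}_s)h]=0$, tested against bounded continuous $\mathcal F_s$-measurable $h$ with $s,t\notin D$, then gives $\E^P[(\cM^{\mu,\phi}_t-\cM^{\mu,\phi}_s)h]=0$, and the martingale property on all of $[0,T]$ follows by right-continuity of $\cM^{\mu,\phi}$ and bounded convergence. Hence $P\in\cR(\mu)$.

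\textbf{Lower hemicontinuity.} Given $\mu^n\to\mu$ and $P\in\cR(\mu)$, I would use the weak-SDE characterization of Lemma~\ref{defR2}: realize $P$ as the law of $(\Gamma,X)$ for a weak solution of \eqref{eqeqX} on some $(\Omega',\mathcal F',\F',Q)$ driven by orthogonal martingale measures $M$ with intensity $\Gamma_t(d\al)dt$ and a Poisson measure $\mathcal N$ with compensator $\Gamma_t(d\al)\lambda(t)dt$; then let $X^n$ be the strong solution of \eqref{eqeqX} on the same space with $\mu^n$ in place of $\mu$ and the same $(\Gamma,M,\mathcal N)$, and set $P_n=\mathrm{Law}(\Gamma,X^n)$, which belongs to $\cR(\mu^n)$ by Lemma~\ref{defR2}. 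A routine stability estimate (Burkholder--Davis--Gundy and Grönwall, using the Lipschitz continuity in $(x,\mu)$ from Assumption~(\ref{ass:A}.\ref{ass:A:growth:b}) and the finite moments from Lemma~\ref{stime}) yields
\[
\E\Bigl[\sup_{t\le T}|X^n_t-X_t|^p\Bigr]\;\le\;C_T\int_0^T d_{W,p}(\mu^n_s,\mu_s)^p\,ds,
\]
which tends to $0$ as $n\to\infty$ again by Lemma~\ref{Lemma:cont-mu}. Since the $\V$-component is unchanged and the Skorokhod distance is dominated by the uniform one, this gives $P_n\to P$ in $\cP^p(\Omega[A])$, so $\cR$ is lower hemicontinuous; combined with the previous step, $\cR$ is continuous.

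\textbf{Main obstacle.} The delicate point is the martingale-property part of the closed-graph argument. Unlike in Lacker's continuous-path setting, the evaluation maps $X\mapsto\phi(X_t)$ and the measure flows $t\mapsto\mu_t$ are continuous only off a null set of times, and the jump term in $L\phi$ must be controlled as well; this is precisely where the extra Lipschitz-in-$\mu$ hypothesis on the coefficients together with Lemma~\ref{Lemma:cont-mu} come in, and one must restrict the test times to the complement of the exceptional countable set before invoking bounded convergence.
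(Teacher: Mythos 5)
Your proposal is correct and follows essentially the same route as the paper: relative compactness of the image via Proposition~\ref{Qccompact} (with the $\Gamma$-marginals handled by compactness of $A$), upper hemicontinuity via the closed-graph theorem using the Lipschitz-in-$\mu$ bound on $L\phi$ together with Lemmas~\ref{Lemma:Cont} and~\ref{Lemma:cont-mu}, and lower hemicontinuity via the same coupling/stability argument built on Lemma~\ref{defR2}, Burkholder--Davis--Gundy and Gr\"onwall. Your one deviation is an extra precaution the paper glosses over --- restricting the test times $s,t$ to the complement of the countable set of fixed discontinuity times of $P$ before invoking weak convergence, and then extending by right-continuity --- which refines but does not alter the argument.
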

\begin{proof} We split the proof in three steps. \medskip

\emph{Step 1: the image of $\cR$ is relatively compact.}
Using \cite[Lemma A.2]{La}, we prove that $\cR(\cP^p (\cD))$ is relatively compact in $\cP^p (\Omega[A])$ by proving that $\{P\circ\Gamma^{-1} : P\in\cR(\cP^p (\Omega[A]))\}$ and $\{P\circ X^{-1} : P\in\cR(\cP^p (\Omega[A]))\}$ are relatively compact sets in $\cP^p (\V )$ and $\cP^p (\cD)$ respectively. The compactness of $\{P\circ\Gamma^{-1} : P\in\cR(\cP(\Omega[A]))\}$ in $\cP(\V )$ equipped with the $p$-Wasserstein metric follows from the compactness of $A$, and therefore of $\V $.
On the other hand, the compactness of $\{P\circ X^{-1} : P\in\cR(\cP^p (\Omega[A]))\}$ follows from Proposition~\ref{Qccompact}.\medskip

\emph{Step 2: $\cR$ is upper hemicontinuous.} In order to show that $\cR$ is upper hemicontinuous, we use the closed graph theorem (see, e.g., \cite[Theorem 17.11]{AlBo}) by proving that $\cR$ is closed, i.e. for each $\mu^n \to \mu\in \cP^p (\cD)$ and for each convergent sequence $P^n \to P$ with $P^n\in\cR(\mu^n)$, it holds that $P\in\cR(\mu)$.
According to Definition~\ref{defR}, we have to show that $P\circ X_0^{-1}=\chi$ and that $\cM^{\mu,\phi}$, defined as in \eqref{opM} on the canonical probability space $(\Omega[A],\mathcal{F},(\mathcal{F}_t)_{t\in[0,T]},P)$, is a $P$-martingale for all $\phi \in C_0 ^\infty$. 
\\
The first condition is satisfied since convergence in probability implies convergence in distribution and therefore $X_0\overset{d}{=}\lim_{n\to\infty} X^n_0$, whose law is given by $\chi$.
\\
Regarding the second condition, let $s,t \in [0,T]$ be such that $0\le s \le t\le T$, and let $h$ be a continuous, $\mathcal{F}_s$-measurable, bounded function. By the martingale property of $\cM^{\mu^n,\phi}$, we have $\E^P[(\cM^{\mu^n,\phi}_t-\cM^{\mu^n,\phi}_s)h]=0$ for all $n$. Hence to prove that $\cM^{\mu,\phi}$ is a $P$-martingale for all $\phi \in C_0 ^\infty$ it suffices to prove
\begin{equation}
\label{eq:limitmart}
\lim_{n\to\infty}\E^P[(\cM^{\mu^n,\phi}_t-\cM^{\mu^n,\phi}_s)h] = E^P[(\cM^{\mu,\phi}_t-\cM^{\mu,\phi}_s)h].
\end{equation}
First of all, note that the functional $\cM^{\mu^n,\phi}$ can be bounded, uniformly on $n$. Indeed, by Taylor's theorem, we have
\begin{align*}
\abs{L\phi(t,x,\mu,\Gamma)}&\le\norm{  b(t,x,\mu,\alpha)^\top D\phi(x)}_\infty +\frac{1}{2}\norm{ \tr\left[ \sigma\sigma^\top(t,x,\mu,\alpha)D^2\phi(x)\right]}_\infty \\
&\quad
 + \norm{  \left[\phi(x+\beta(t,x,\mu , \alpha))-\phi(x)- \beta(t,x,\mu ,\al)^\top D\phi (x)\right] \lambda(t) }_\infty\\
 &\le\norm{  b(t,x,\mu,\alpha)^\top D\phi(x)}_\infty +\frac{1}{2}\norm{ \tr\left[ \sigma\sigma^\top(t,x,\mu,\alpha)D^2\phi(x)\right]}_\infty \\
&\quad
 +\frac{1}{2} \norm{  \left[ \beta(t,x,\mu ,\al)^\top D^2\phi (x + \xi \beta(t,x,\mu ,\al)) \beta(t,x,\mu ,\al)\right] \lambda(t) }_\infty\\
&\le d ( c_1 + c^2_1) C_{\phi} ,
\end{align*}
where $\xi\in[0,1]$, $\norm{b}_\infty+\norm{\sigma}_\infty+\norm{\beta}_\infty \le c_1$, $\norm{D \phi}_\infty + \norm{D^2 \phi}_\infty\le C_{\phi}$. Therefore
\[
\| \cM^{\mu^n,\phi} \| \le \norm{\phi}_\infty +T d ( c_1 + c^2_1) C_{\phi} =: \bar C_\phi\,.
\] 
The global continuity of the functions $b$, $\sigma$, $\beta$ and $\lambda$ guarantees that also the function $L\phi$ is for all test functions $\phi$. Moreover, since, according to~Assumptions \ref{ass:A} and~\ref{ass:B}, all such coefficients are bounded, and  $b$, $\sigma$ and $\beta$ are Lipschitz continuous with respect to the variable $\mu$ uniformly in $(t,x,\alpha)$, then also $L\phi$ is Lipschitz with respect to $\mu\in\cP(\R)$. Indeed 
\begin{align*}
\label{opL}
|L\phi & (t,x,\mu, \al)-L\phi(t,x,\nu,\al)|
\\
&\le \big| \left(b(t,x,\mu, \alpha)-b(t,x,\nu,\alpha)\right)^\top D\phi(x)+\frac{1}{2}\tr\left[\sigma \sigma^\top(t,x,\mu,\alpha)-\sigma\sigma^\top (t,x,\nu,\alpha)\right]D^2 \phi(x)\\
&\quad + \big[\phi(x+\beta(t,x,\mu , \alpha))-\phi(x)- \beta(t,x,\mu ,\al)^\top D\phi (x) \\
& \quad -\phi(x+\beta(t,x,\nu , \alpha))+\phi(x) + \beta(t,x,\nu ,\al)^\top D\phi (x)\big] \lambda(t)\big|
\\
&\le \abs{b(t,x,\mu, \alpha)-b(t,x,\nu,\alpha)}^\top \abs{D\phi(x)}\\
&\quad+\frac{1}{2}\abs{\tr\left[(\sigma (t,x,\mu,\alpha)+\sigma (t,x,\nu,\alpha))(\sigma (t,x,\mu,\alpha) - \sigma(t,x,\nu,\alpha))^\top\right]}\abs{D^2 \phi(x)}\\
&\quad+ c_1\abs{\phi(x+\beta(t,x,\mu , \alpha))-\phi(x+\beta(t,x,\nu , \alpha))} +c_1 \abs{\beta(t,x,\mu ,\al)- \beta(t,x,\nu ,\al)}^\top \abs{ D\phi (x)} 
\\
&\le \abs{b(t,x,\mu, \alpha)-b(t,x,\nu,\alpha)}^\top \abs{D\phi(x)} \\
&\quad + \frac{1}{2}\norm{\sigma (t,x,\mu,\alpha)+\sigma (t,x,\nu,\alpha)}_\infty\abs{\sigma (t,x,\mu,\alpha) - \sigma(t,x,\nu,\alpha)}\abs{D^2\phi(x)}\\
&\quad + c_1 \abs{\beta(t,x,\mu, \alpha)-\beta(t,x,\nu,\alpha)}^\top  \left( \abs{D\phi(x+\xi\beta(t,x,\mu,\alpha))}+ \abs{D\phi(x)} \right)
\\
& \le (C_{\phi} + 2 c_1 C_{\phi} + 2 c_1 C_{\phi}) d_{W,p}(\mu,\nu) ,
\end{align*}
where $\xi\in[0,1]$. Therefore we can conclude that 
\[
(\mu,\Gamma,X)\mapsto\int L\phi(t,X_t,\mu_t,\alpha)\Gamma(d\alpha)dt
\]
is continuous. Indeed, the continuity with respect to $(X,\Gamma)$ is provided by Lemma~\ref{Lemma:Cont}, whereas the continuity with respect to $\mu$ is an application of Lemma~\ref{Lemma:cont-mu} since by the previous computation it follows that
\[
\int_A \abs{L\phi(t,X_t,\mu_t,\alpha)- L\phi(t,X_t,\nu_t,\alpha)}\Gamma(d\alpha)\le C d_{W,p}(\mu_t,\nu_t)\,.
\]
Therefore for each continuous bounded function $h$
\[
\lim_{n\to\infty}\E^{P^n}\left[\left(\int L\phi(s,X^n_s,\mu^n_s,\alpha)\Gamma^n_s(d\alpha)ds\right)h\right]=\E^P\left[\left(\int L\phi(s,X_s,\mu_s,\alpha)\Gamma_s(d\alpha)ds\right)h\right]
\]
and moreover, since $P^n\to P$ and $\phi$ is bounded and continuous, we have that
\[
\E^{P^n}[\phi(X^n_t)]=\int \phi(X^n_t)\,dP^n \to \int \phi(X_t)\,dP = \E^{P}[\phi(X_t)]\,.
\]
This means
\[
\E^P[(\cM^{\mu,\phi}_t-\cM^{\mu,\phi}_s)h]=\lim_{n\to\infty}\E^P[(\cM^{\mu_n,\phi}_t-\cM^{\mu_n,\phi}_s)h]=0,
\]
which implies $\E^P[\cM^{\mu,\phi}_t-\cM^{\mu,\phi}_s]=0$, i.e. $\cM^{\mu,\phi}$ is a martingale and $P\in\cR(\mu)$.\medskip

\emph{Step 3: $\cR$ is lower hemicontinuous.} We are left with proving that $\cR$ is also lower hemicontinuous. Let $\mu\in \cP^p (\cD)$ and $\mu^n$ be a sequence in the same space converging to $\mu$. Then, for every $P\in\cR(\mu)$ we need to exhibit a sequence  $P_n\in\cR(\mu^n)$ such that $P_n\to P$ in $\cP^p (\Omega[A])$. Let $(\Omega',\mathcal{F}',\F',P')$ be a filtered probability space, $M=(M^1,\dots,M^m)$ orthogonal continuous $\mathbb F^\prime$-martingale measures on $[0,T]\times A$ with intensity $\Gamma_t(da)dt$ and $\mathcal N$ a Poisson random measure on $[0,T] \times A$ with intensity measure $\Gamma_t(da)\lambda(t)dt$, where $\lambda(t)$ is a bounded function of time.
Let $X$ be the unique strong solution of the following SDE:
\begin{equation}
\label{Sde:X}
dX_t =\int_A b(t,X_t, \mu_t,\al)\Gamma_t(d\alpha) dt + \int_A \sigma(t,X_t,\mu_t,\alpha) M(dt,d\alpha) + \int_A \beta(t,X_{t-},\mu_{t-},\alpha)  \widetilde{\mathcal N}(dt,d\al)
\end{equation}
with initial condition $X_0$. Lipschitz continuity and growth conditions on the coefficients grant existence and uniqueness of strong solution of equation~\eqref{Sde:X}.
 Then by uniqueness we have $P'\circ(\Gamma,X)^{-1}=P$.
For each $n$, define $X^n$ as the process solving
\[
dX^n_t =\int_A b(t,X^n_t, \mu^n_t,\al)\Gamma_t(d\alpha) dt + \int_A \sigma(t,X^n_t,\mu^n_t,\alpha) M(dt,d\alpha) + \int_A \beta(t,X^n_{t-},\mu^n_{t-},\alpha)  \widetilde{\mathcal N}(dt,d\al).
\]
We want to show that
\begin{align}\label{Ediffp}
\E^{P'}\left[\left(\abs{X^n-X}_t^*\right)^p\right]\to0\,.
\end{align}
Let $\bar p=\max\{2,p\}$. Then, for a suitable positive constant $C>0$ we have
\begin{align}
\label{diseqXn}
\abs{X^n_t-X_t}^{\bar p}& \le C\abs{\int_0^t \int_A \abs{b(s,X^n_s,\mu^n_s,\alpha)-b(s,X_s,\mu_s,\alpha)}\,\Gamma_s(d\alpha)ds}^{\bar p} \\
& \quad +C\abs{\int_0^t\int_A\abs{\sigma(s,X^n_s,\mu^n_s,\alpha)-\sigma(s,X_s,\mu_s,\alpha)}\,M(ds,d\alpha)}^{\bar p} \nonumber \\
& \quad +C\abs{\int_0^t \int_A \abs{\beta(t,X^n_{t-},\mu^n_{t-},\alpha)-\beta(t,X_{t-},\mu_{t-},\alpha)}\widetilde{\mathcal N}(dt,d\al)}^{\bar p}. \nonumber
\end{align}
Since $b$ is Lipschitz continuous in $x$ and $\mu$, we have that
\begin{multline}
\label{diseq1}
\E^{P'}\left[\abs{\int_0^t \int_A \abs{b(s,X^n_s,\mu^n_s,\alpha)-b(s,X_s,\mu_s,\alpha)}\,\Gamma_s(d\alpha)ds}^{\bar p}\right]\\
\le C(c_1,\bar p,T)\left(\int_0^t \E^{P'} \left(\abs{X^n-X}^*_s\right)^{\bar p} ds + \int_0^t d_{W,p} ( \mu_s ^n , \mu_s)^{\bar p} ds\right)\,.
\end{multline} 
Regarding the stochastic integral in \eqref{diseqXn}, we can apply Jensen and  Burkholder-Davis-Gundy inequalities to obtain
\begin{multline}
 \E^{P'} \left[\left(\abs{\int_0^\cdot \int_A\abs{\sigma(s,X^n_s,\mu^n_s,\alpha)-\sigma(s,X_s,\mu_s,\alpha)}\,M(ds,d\alpha)}_t^*\right)^{\bar p}\right]
\\\le C(\bar p, c_1) \left(\int_0^t \E^{P'} \left(\abs{X^n-X}^*_s\right)^{\bar p} ds + \int_0^t d_{W,p} ( \mu_s ^n , \mu_s)^{\bar p} ds\right)\,.
\end{multline}
Lastly, applying the Burkholder-Davis-Gundy inequality to the Poisson measure integral in~\eqref{diseqXn} combined with the boundedness of the intensity function $\lambda(t)$, we have that
\begin{align*}
&\E^{P'}\left[	\left(\abs{\int_0^\cdot \int_A \abs{\beta(t,X^n_{t-},\mu^n_{t-},\alpha)-\beta(t,X_{t-},\mu_{t-},\alpha)}\widetilde{\mathcal N}(dt,d\al)}^*_t\right)^{\bar p}	\right]
\\
&\hspace{2cm}\le C(\bar p) \E\left[	\int_0^t \int_A \abs{\beta(s,X^n_{s-},\mu^n_{s-},\alpha)-\beta(s,X_{s-},\mu_{s-},\alpha)}^{\bar p} \Gamma_s(d\alpha)	\right]
\\
&\hspace{2cm}\le C(\bar p, c_1) \left(\int_0^t \E^{P'} \left[\left(\abs{X^n-X}^*_s\right)^{\bar p} \right] ds + \int_0^t d_{W,p} ( \mu_s ^n , \mu_s)^{\bar p} ds\right)\,.
\end{align*}
Notice that the last integral $\int_0^t d_{W,p} ( \mu_s ^n , \mu_s)^{\bar p} ds$ in all estimates above converges to zero as $n \to \infty$ due to Lemma \ref{Lemma:cont-mu}.

Therefore, combining the previous results and applying the Gronwall's inequality, we conclude that $\E^{P'}\left[\left(\abs{X^n-X}^*_T\right)^p\right]\to0$.

 By defining $P^n=P\circ(\Gamma,X^n)^{-1}$ we have found the desired sequence such that $P^n\to P$ in $\cP^p (\Omega[A])$. To conclude, we have to show that $P^n$ is indeed in $\cR(\mu^n)$. $P^n$ satisfies condition~\eqref{condin} in Definition~\ref{defR} by construction, and condition~\eqref{martingaleproperty} can be checked by applying It\^o's formula to $\phi(X_t ^n)$, for each $\phi\in C^\infty_b(\R^d )$.
\end{proof}


\subsection{A compactness result}
\begin{proposition}
\label{Qccompact}
Let $c>0$ be a given positive constant and let $p' >p \ge 1$. Let $\chi$ be a given initial law. Define $\mathcal Q_c \subset\cP^p (\Omega[A])$ as the set of laws $Q = P \circ (X,\Gamma)^{-1}$ of $\Omega[A]$-valued random variables defined on some filtered probability space $(\Omega,\mathcal{F},\F,P)$ with $\F=(\mathcal F_t)$, such that:
\begin{enumerate}
\item $dX_t= \int_A b(t,X_t, \mu_t , \al)\Gamma_t (d\al) dt + \int_A \sigma(t,X_t,\mu_t , \al) M(dt,d\al) +\int_{ A} \beta(t,X_{t-},\mu_{t-},\al )\widetilde{\mathcal N}(dt,d\al)$, where $M=(M^1,\ldots, M^m)$ are orthogonal $(\mathcal F_t)$-martingale measures on $A \times [0,T]$ with intensity $\Gamma_t (da)dt$ and $\mathcal N$ is a random measure with intensity $\Gamma_t (da)\lambda(t)dt$, where the function $\lambda : [0,T] \to \mathbb R$ is measurable and bounded;
\item $P\circ X_0^{-1} = \chi$;
\item \label{assbb} $(b,\sigma, \beta)\colon[0,T]\times \R^d \times \cP^p (\R^d) \times A \to \R^d \times \R^{d \times m}\times \R^d$ are measurable functions such that
\begin{equation} \label{growth-condition}
|b(t,x, \al )| + |\sigma \sigma^\top(t,x,\al )| + |\beta (t,x,\al)| \le c(1+|x|+|\al |),
\end{equation}
for all $(t,x,\al ) \in [0,T] \times \R^d \times A$;
\item $\E \left[ |X_0|^{p'} + \int_0 ^T |\Gamma_t|^{p'} dt \right] \le c$.
\end{enumerate} 
Hence $\mathcal Q_c$ is relatively compact in $\cP^p (\Omega[A])$.
\end{proposition}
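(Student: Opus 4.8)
The plan is to invoke \cite[Lemma A.2]{La}, which reduces relative compactness in the $p$-Wasserstein topology of a family of laws on a product space to relative compactness of each family of marginals. Thus it suffices to show that $\{Q\circ\Gamma^{-1}:Q\in\mathcal Q_c\}$ is relatively compact in $\cP^p(\V)$ and $\{Q\circ X^{-1}:Q\in\mathcal Q_c\}$ is relatively compact in $\cP^p(\cD)$. All the estimates below use only the linear growth bound \eqref{growth-condition}, which holds uniformly in the measure argument, so every constant produced is independent of $Q$.

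For the $\Gamma$-marginals the only nontrivial case is $A$ unbounded. Condition~(4) gives $\sup_{Q\in\mathcal Q_c}\E^Q[\int_0^T|\Gamma_t|^{p'}\,dt]\le c$ with $p'>p$, and since each $\Gamma_t$ is a probability measure this also bounds $\sup_Q\E^Q[d_\V(\Gamma,\Gamma_0)^{p'}]$ for a fixed reference control $\Gamma_0$ (Jensen's inequality applied to the time average). On one hand, by Chebyshev's inequality the set $\{\Gamma\in\V:\int_{[0,T]\times A}|\al|^{p'}\,\Gamma(dt,d\al)\le R\}$ carries $Q$-mass at least $1-c/R$ uniformly in $Q$ and is relatively compact in $\V$: its elements, regarded as measures on $[0,T]\times A$, have Lebesgue first marginal on the compact interval $[0,T]$ and put mass at most $R/M^{p'}$ outside $[0,T]\times(A\cap\{|\al|\le M\})$, which is compact since $A$ is closed. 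Hence $\{Q\circ\Gamma^{-1}\}$ is tight in $\cP(\V)$; together with the uniform $p'$-moment bound, which by $p'>p$ yields $p$-uniform integrability, relative compactness in $\cP^p(\V)$ follows from \cite[Prop. 7.1.5]{ambrosio}.

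For the $X$-marginals I would first establish the uniform moment bound: arguing exactly as in the proof of Lemma~\ref{stime}, but invoking the growth bound \eqref{growth-condition} in place of Assumption~(\ref{ass:A}.\ref{ass:A:growth:b}) so that no $\|\mu\|$-terms appear, and using condition~(4), a Burkholder-Davis-Gundy plus Gronwall argument produces a constant $C=C(T,c,\chi,p')$ with $\sup_{Q\in\mathcal Q_c}\E^Q[(|X|^*_T)^{p'}]\le C$. As $p'>p$, this again yields $p$-uniform integrability of $|X|^*_T$ over $\mathcal Q_c$, so by \cite[Prop. 7.1.5]{ambrosio} it remains to prove tightness of $\{Q\circ X^{-1}\}$ in $\cP(\cD)$. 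For this I would use the sufficient criterion \cite[Lemma 3.11]{Whitt}: the compact-containment requirement follows from the moment bound and from $X_0\sim\chi$ being a single, hence tight, law, and it remains to exhibit a modulus $Z(\delta)\downarrow0$ with $\E^Q[|X_{(\tau+\delta)\wedge T}-X_\tau|^p]\le Z(\delta)$ uniformly over $Q\in\mathcal Q_c$ and over $[0,T]$-valued stopping times $\tau$. Splitting the increment of the SDE in condition~(1) between $\tau$ and $(\tau+\delta)\wedge T$ into its drift, martingale-measure and compensated-Poisson parts, applying Jensen's and the Burkholder-Davis-Gundy inequalities (treating $p<2$ and $p\ge2$ separately) together with the growth bound \eqref{growth-condition}, everything reduces to controlling $\E^Q[\int_\tau^{(\tau+\delta)\wedge T}(1+|X_s|^p+|\Gamma_s|^p)\,ds]$; by Hölder's inequality on the product of time and probability, the uniform moment bounds and condition~(4), this is at most a constant times $\delta^{1-p/p'}$, so one may take $Z(\delta)=\bar C\,\delta^{1-p/p'}$, and tightness follows. (In the bounded case the same computation gives instead the linear bound $Z(\delta)=\bar C(c_1,c_\lambda,\sigma)\,\delta$, the constant used in the proof of Theorem~\ref{thm:relaxedMFGs}.)

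The main obstacle is this last, uniform-in-$(Q,\tau)$ modulus-of-continuity estimate: one must handle coefficients of merely linear growth, evaluated along the solution and integrated against a relaxed control $\Gamma$ that may concentrate its mass in time, so that a naive deterministic bound on the conditional increment fails. This is precisely where the strict inequality $p'>p$ enters — it is what makes the Hölder estimate of $\E^Q[\int_\tau^{(\tau+\delta)\wedge T}|\Gamma_s|^p\,ds]$ tend to $0$ as $\delta\to0$ — and where some care with the Burkholder-Davis-Gundy exponents is needed when $p<2$.
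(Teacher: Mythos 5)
Your proposal is correct and takes essentially the same route as the paper's proof (which itself defers several steps to \cite[Proposition B.4]{La}): reduction to the two marginals, a Burkholder--Davis--Gundy/Gronwall moment bound uniform in $Q$, a uniform-in-$(Q,\tau)$ increment estimate obtained by H\"older's inequality exploiting $p'>p$, and the upgrade from tightness to relative compactness in $\cP^p$ via the uniform $p'$-moments. The only cosmetic difference is that you cite \cite[Lemma 3.11]{Whitt} while in fact stating the stopping-time (Aldous) criterion of \cite[Theorem 16.10]{Bill} that the paper uses here; the substance is identical.
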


\begin{proof}
Since $\cD$ is a Polish space under $J_1$ metric,
Prokhorov's theorem (cf. \cite[Theorem 5.1, Theorem 5.2]{Bill}) ensures that a family of probability measures on $\cD$ is relatively compact if and only if it is tight. In order to prove the tightness, we will use the Aldous's criterion provided in \cite[Theorem 16.10]{Bill}.
By proceeding as in the previous Lemma~\ref{stime}, there exists a constant $C=C(T,c,\chi)$ (uniform in $Q$) such that
\[
\E^Q\left[\left(\abs{X}^*_T\right)^{p}\right]\le C \E^Q \left[ 1+ |X_0 | ^{p'} + \int_0 ^T |\Gamma_t |^{p'} dt \right] ,
\]
which implies that $\E^Q [ (\abs{X}^*_T )^p ]$ is bounded by a constant which depends upon $Q$ only through the initial distribution $\chi$, which is the same for all laws in $\cQ$. Hence, we have
\begin{equation}
\label{EPnormX}
\sup_{Q\in\mathcal{Q}_c} \E^Q\left[\left(\abs{X}^*_T\right)^{p}\right]
<\infty\,.
\end{equation}
Then we are left with proving that
\begin{equation}
\label{Aldous}
\lim_{\delta\downarrow0}\sup_{Q\in\mathcal{Q}_c}\sup_{\tau \in \mathcal T_T} \E^Q \left[\abs{X_{(\tau+\delta)\wedge T}-X_\tau}^p\right]=0,
\end{equation}
where $\mathcal T_T$ denotes the family of all stopping times with values in $[0,T]$ a.s.
For each $Q\in\mathcal{Q}_c$ and each stopping time $\tau\in \mathcal T_T$, there exists a constant $\tilde C$ such that
\begin{align} \label{stimetau}
\begin{split}
\E^Q\left[\abs{X_{(\tau+\delta)\wedge T}-X_\tau}^p\right]&\le \tilde C\E^Q\left[\abs{\int_A \int_\tau^{(\tau+\delta)\wedge T}b(t,X_t, \al)\Gamma_t (d\al) dt}^{p}\right] \\
&\hspace{0.5cm} +\tilde C \E^Q \left[\abs{\int_A \int_\tau^{(\tau+\delta) \wedge T}\sigma(t,X_t ,\al)M(d\al, dt)}^{p}\right]\\
	&\hspace{0.5cm}+\tilde C\E^Q \left[\abs{\int_{[\tau,(\tau+\delta)\wedge T]\times A}\beta(t,X_{t-},\al) \widetilde{\mathcal N}(dt,d\al)}^{p}\right].
\end{split}
\end{align}
Without loss of generality, we may assume that $\delta\le1$. By applying Burkholder-Davis-Gundy inequality, the property \eqref{growth-condition} and the boundedness of the intensity function $\lambda(t)$ as in the proof of Lemma~\ref{stime}, there exists a constant $C$ such that, for all $Q \in \mathcal Q_c$ and all stopping times $\tau \in \mathcal T_T$, we have
\begin{align}\label{def:barC}
\begin{split}
\E^Q\left[\abs{X_{(\tau+\delta)\wedge T}-X_\tau}^p\right]&\le C\E^Q\left[\abs{\int_A \int_\tau^{(\tau+\delta)\wedge T}(1+|X|^*_T + |\al |)\Gamma_t (d\al) dt}^{p}\right] \\
&\hspace{0.5cm} + C \E^Q \left[\left(\int_A \int_\tau^{(\tau+\delta) \wedge T}(1+|X|^*_T + |\al |) \Gamma_t (d\al) dt \right)^{p/2}\right].
\end{split}
\end{align}
From this point onwards we can proceed as in the proof of \cite[Proposition B.4]{La}, which gives
\[
\lim_{\delta\downarrow 0}\sup_{Q\in\mathcal{Q}}\sup_{\tau \in \mathcal T_T} \E^Q\left[\abs{X_{(\tau+\delta)\wedge T}-X_\tau}^p\right]=0\,.
\]
Hence Aldous' criterion applies and the proof is completed.\end{proof}

\section{Details on the computation of $\phi$ in equation \eqref{odephi}}
\label{sec:Behaviour}\noindent
We want to solve the final value Cauchy problem
\[
\dot \phi_t=F(\phi_t ), \quad \phi_T=c>0 ,
\]
where
\[
F(u)=\frac{Au^2+Bu+C}{1+k u} .
\]
Then, if we perform the time reversal $\tau=T-t$ we can consider the equivalent Cauchy problem
\[
\dot \phi_\tau =-F(\phi_\tau ), \quad \phi_0 =c>0 .
\]
Moreover we look for a solution whose graph belongs to the domain $D_k := [0,T]\times(-\frac{1}{k},\infty)$, where $k:=\frac{1}{\lambda}\left(1-\frac{1}{n}\right)^2$, so that the optimal strategy given in \eqref{can+ansA} is well defined for all $t\in[0,T]$. Notice that $k\neq0$ if $\lambda > 0$ and $n \ge 2$, which we can safely assume to rule out trivialities.

Note that since $-F$ and $-\dot F$ are continuous functions (in the smaller domain $D_k$) then we can apply standard results for existence and uniqueness.
Therefore the function $\phi$ solves the following Cauchy problem
\[
(1+k\phi_t)\dot \phi_t=A\phi^2_t+B\phi_t+C, \quad \phi_T=c
\]
for appropriate values of $k,A,B,C$ which do not depend on time $t$:
\begin{align*}
A & := \left(\lambda +\frac{2a}{\lambda}\left(1-\frac{1}{n}\right)\right)\left(1-\frac{1}{n}\right)>0,\\
B & := \lambda\theta\left(2-\frac{1}{n}\right)-\varepsilon \left(1-\frac{1}{n}\right)^2+2a,\\
C & := \lambda(\theta^2-\varepsilon)<0.
\end{align*}
Let $$\omega(t):=\left(\phi_t +\frac{1}{k}\right)e^{-\frac{A}{k}t} .$$
Then $\omega$ solves the following ODE:
\[
\dot\omega \omega=F_1(t)\omega+F_0(t) ,
\] 
where
\begin{gather*}
F_0(t)=\left(\frac{C}{k}-\frac{B}{k^2}+\frac{A}{k^3}\right)e^{-2\frac{A}{k}t} ,\quad F_1(t)=\left(\frac{B}{k}-2\frac{A}{k^2}\right)e^{-\frac{A}{k}t}.
\end{gather*}
Lastly, by choosing $$\xi=\int F_1(t)\,dt=\left(\frac{2}{k}-\frac{B}{A}\right)e^{-\frac{A}{k}t} ,$$ we have that
\begin{equation}\label{eq-omega}
\omega(\xi)\dot\omega(\xi)=\omega(\xi)+K\xi ,
\end{equation}
where
\[
K=-\frac{(Ck^2-Bk+A)A}{(2A-kB)^2} .
\]
Equation \eqref{eq-omega} can be solved in parametric form as
\[
\xi=h \exp\left(-\int\dfrac{\tau}{\tau^2-\tau-K}\,d\tau\right) , \quad \omega= h \tau \exp\left(-\int\dfrac{\tau}{\tau^2-\tau-K}\,d\tau\right) ,
\]
where $h$ is a suitable constant. Therefore,
\[
\xi=h \exp\left(-\int\dfrac{\tau}{\tau^2-\tau-K}\,d\tau\right) , \quad 
\phi= \left(h \tau \exp\left(-\int\dfrac{\tau}{\tau^2-\tau-K}\,d\tau\right)\right)e^\frac{A}{k}t-\frac{1}{k}\, .
\]
Then, computing the integral in the definition of $\xi$ gives 
\[
\xi = h \exp \left(-\dfrac{\tan^{-1}\left(\frac{-1+2 \tau}{\sqrt{-1-4 K}}\right)}{\sqrt{-1-4 K}}\frac{1}{\sqrt{\tau^2-\tau-K}}\right).
\]
Finally, in order to compute $\phi$ explicitly, we need to invert this relation  $\xi \mapsto \xi(\tau)$ and plug it into $\omega$.

\end{appendix}

\end{document}